\documentclass[11pt]{article}
\usepackage{enumerate}
\usepackage{amsmath}
\usepackage{amsthm}
\usepackage{amsfonts}
\usepackage{amssymb}
\usepackage[numbers]{natbib}
\usepackage{xcolor}
\usepackage{hyperref}

\usepackage{units}
\usepackage{float}
\usepackage{tikz}

\setlength{\bibsep}{0.0pt}
\usepackage{graphicx}

\newcommand*\samethanks[1][\value{footnote}]{\footnotemark[#1]}
\usepackage{mathtools}

\usepackage{dsfont} 

\usepackage{fullpage}


\usepackage[capitalise]{cleveref} 
\crefname{equation}{}{}
\crefname{enumi}{}{}

\def\vp#1{}
\renewcommand{\vp}[1]{\footnote{\textcolor{green!40!black}{\textbf{VP: }#1}}}

\newcommand{\keywords}[1]
{
	{\small\textbf{Keywords:} #1}
}

\newtheorem{question}{Question}
\newtheorem{corollary}[question]{Corollary}
\newtheorem{problem}[question]{Problem}
\newtheorem{conjecture}[question]{Conjecture}
\newtheorem{theorem}[question]{Theorem}

\newtheorem{proposition}[question]{Proposition}
\newtheorem{lemma}[question]{Lemma}
\newtheorem{remark}[question]{Remark}
\newtheorem{claim}[question]{Claim}
\newtheorem{definition}[question]{Definition}
\newtheorem{construction}[question]{Construction}
\numberwithin{question}{section}
\numberwithin{equation}{section}

\title{Rainbow variations on a theme by Mantel:\\
	extremal problems for Gallai colouring templates}
\author{Victor Falgas-Ravry\thanks{Institutionen f\"or Matematik och Matematisk Statistik, Ume{\aa} Universitet, Sweden. Emails: \texttt{victor.falgas-ravry}, \texttt{klas.markstrom}, \texttt{eero.raty} \texttt{@umu.se}. } \and Klas Markstr\"om\samethanks \and Eero R\"aty\samethanks}
\begin{document}
	\maketitle	
\begin{abstract}
Let $\mathbf{G}:=(G_1, G_2, G_3)$ be a triple of graphs on the same vertex set $V$ of size $n$. A rainbow triangle in $\mathbf{G}$ is a triple of edges $(e_1, e_2, e_3)$ with $e_i\in G_i$ for each $i$ and $\{e_1, e_2, e_3\}$ forming a triangle in $V$. The triples $\mathbf{G}$ not containing rainbow triangles, also known as Gallai colouring templates, are a widely studied class of objects in extremal combinatorics.

In the present work, we fully determine the set of edge densities $(\alpha_1, \alpha_2, \alpha_3)$ such that if $\vert E(G_i)\vert> \alpha_i n^2$ for each $i$ and $n$ is sufficiently large, then $\mathbf{G}$ must contain a rainbow triangle. This resolves a problem raised by Aharoni, DeVos, de la Maza, Montejanos and \v{S}\'amal, generalises several previous results on extremal Gallai colouring templates, and proves a recent conjecture of Frankl, Gy\H{o}ri, He, Lv, Salia, Tompkins, Varga and Zhu. 
\end{abstract}	

\keywords{extremal graph theory, rainbow triangles, Gallai colourings, Mantel's theorem}

\section{Introduction}
Mantel's Theorem from 1907~\cite{Mantel07} is one of the foundational results in extremal graph theory. It asserts that a triangle-free graph $G$ on $n$ vertices has at most $\lfloor \frac{n^2}{4}\rfloor$ edges, with equality if and only if $G$ is (isomorphic to) the complete balanced bipartite graph $T_2(n)$. While the proof of Mantel's theorem is a simple combinatorial exercise, triangle-free graphs act as a kind of theoretical lodestone in extremal combinatorics: many important extremal tools or problems are first developed or studied in the context of triangle-free graphs. One may think, for example, of results on the independence number of triangle-free graphs~\cite{Shearer83}, the chromatic threshold phenomenon~\cite{AndrasfaiErdosSos74, Thomassen02}, the triangle removal lemma~\cite{RuzsaSzemeredi76}, and on random~\cite{DeMarcoKahn15} and tripartite~\cite{BondyShenThomasseThomassen06} versions of Mantel's theorem.

In this paper we will consider a rainbow variation on Mantel's triangle-free theme, which was first introduced by Gallai in 1967.  Fix an $n$-set $V$ and some integer $r \geq 2$. 
\begin{definition}\label{def: template}[Colouring templates, colourings]
	An $r$-colouring template on $V$ is an $r$-tuple $\mathbf{G}^{(r)}=(G_1, G_2, \ldots , G_r)$, where each of the $G_i$ is a graph on $V$. Whenever $r$ is clear from context, we omit the superscript $r$ and write $\mathbf{G}$ for $\mathbf{G}^{(r)}$.

	  An $r$-coloured graph $(H, c)$ is a graph $H=(V(H), E(H))$ together with an $r$-colouring of its edges $c: \ E(H)\rightarrow \{1,2, \ldots, r\}$.  (Note that an $r$-coloured graph may be identified with an $r$-colouring template where the colour classes $G_i$, $1\leq i\leq r$, are pairwise edge-disjoint.)
\end{definition}  
	 \begin{definition}[Coloured and rainbow subgraphs]
	 	  Given an $r$-coloured graph $(H,c)$, we say that an $r$-colouring template $\mathbf{G}^{(r)}$ on a vertex set $V$ contains a copy of $(H,c)$ as a subgraph if there is an injection $f: \ V(H)\rightarrow  V$ such that for each edge $e=\{x,y\} \in E(H)$ we have $\{f(x),f(y)\}\in G_{c(e)}$.  Further, given  a graph $H$, we say that $\mathbf{G}$ contains a rainbow copy of $H$ if $\mathbf{G}$ contains $(H,c)$ for some $r$-colouring $c: \ E(H)\rightarrow \{1,2, \ldots, r\}$  assigning distinct colours to distinct edges.
\end{definition}
\noindent Gallai~\cite{Gallai67} initiated the study of $r$-colourings with no rainbow triangles, proving a structure theorem that was subsequently re-discovered and extended by a number of other researchers~\cite{CameronEdmonds97,GyarfasSimonyi04}; in honour of his pioneering contributions to the area, $r$-coloured graphs containing no rainbow triangle are known as \emph{Gallai colourings}. We accordingly refer to $r$-colouring templates not containing a rainbow copy of $K_3$ as \emph{Gallai colouring templates}.

Gallai colourings have been extensively studied. For instance, there are connections between Gallai colourings and information theory~\cite{KornerSimony00}, and a considerable interest in counting the number of Gallai colourings and characterising their typical structure~\cite{BaloghLi19, BenevidesHoppenSampaio17,FROConnellUzzell19}. A large body of work has been dedicated to research on Gallai colourings from a Ramsey-theoretic perspective, giving rise to `Gallai--Ramsey theory' ---  see the dynamic survey~\cite{FujitaMagnantOzeki14} devoted to the area.

In this paper, we focus instead on Tur\'an-style questions for Gallai colouring templates. One of the first results of this kind was obtained by Keevash, Saks, Sudakov and Verstra\"ete~\cite{KeevashSaksSudakovVerstraete04}, who determined the arithmetic mean of the size of the colour classes $G_1, G_2, \ldots G_r$ required to guarantee the existence of a rainbow $K_3$ in an $r$-colouring template. As a special case of more general results on rainbow cliques, they proved the following~\cite[Theorem~1.2]{KeevashSaksSudakovVerstraete04}:
\begin{theorem}~\label{theorem: average density forcing rainbow $K_3$}[Keevash, Saks, Sudakov, Verstra\"ete]
	If $\mathbf{G}$ is a Gallai $r$-colouring template on $n$ vertices for $n$ sufficiently large, then 
	\[\frac{1}{r}\sum_{i=1}^r \vert E(G_i)\vert \leq \left\{\begin{array}{ll}
	\frac{2}{3}\binom{n}{2} &\textrm{ if }r=3,\\
	\left\lfloor \frac{n^2}{4}\right\rfloor & \textrm{ if }r\geq 4,
	\end{array} \right.\]	
	and these upper bounds are best possible.
\end{theorem}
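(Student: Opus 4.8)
The plan is to establish sharpness by exhibiting extremal templates, and the upper bound by splitting into the cases $r=3$, $r=4$ and $r\geq 5$. For sharpness: when $r=3$ take $G_1=G_2=K_n$ and $G_3=\emptyset$, so that only the two colours $1,2$ ever occur and $\mathbf{G}$ is trivially Gallai, with $\tfrac1r\sum_i\vert E(G_i)\vert=\tfrac23\binom n2$; when $r\geq 4$ take $G_1=\cdots=G_r=T_2(n)$, so that no triangle at all has all three of its pairs in $\bigcup_iG_i$, again forcing $\mathbf{G}$ Gallai, now with $\tfrac1r\sum_i\vert E(G_i)\vert=\lfloor n^2/4\rfloor$. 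For the upper bound, associate to each pair $e\in\binom V2$ the set $C(e):=\{i:e\in G_i\}$ and its multiplicity $d(e):=\vert C(e)\vert$, so $\sum_i\vert E(G_i)\vert=\sum_ed(e)$; the Gallai hypothesis says precisely that for every triple $\{x,y,z\}$ the sets $C(xy),C(xz),C(yz)\subseteq[r]$ have no system of distinct representatives, equivalently (Hall) one of them is empty, or two of them coincide and are singletons, or their union has size at most $2$.

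For $r=3$, inspecting these three cases shows $d(xy)+d(xz)+d(yz)\leq 6$ for every triple. Summing over all $\binom n3$ triples and using that each pair lies in $n-2$ of them gives $(n-2)\sum_ed(e)\leq 6\binom n3=n(n-1)(n-2)$, hence $\sum_ed(e)\leq 2\binom n2$, as required.

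For $r\geq 5$ I would induct on $r$ from the base case $r=4$: deleting a colour $i^*$ with $\vert E(G_{i^*})\vert$ minimal leaves a Gallai $(r-1)$-colouring template, so $\tfrac{r-1}{r}\sum_i\vert E(G_i)\vert\leq\sum_{i\neq i^*}\vert E(G_i)\vert\leq(r-1)\lfloor n^2/4\rfloor$, whence $\sum_i\vert E(G_i)\vert\leq r\lfloor n^2/4\rfloor$; this works because the ratio of consecutive bounds equals $\tfrac{r-1}{r}$ exactly for $r\geq 5$, which is what forces $r=3$ and $r=4$ to be treated on their own. The case $r=4$ is the crux. Call $e$ \emph{heavy} if $d(e)\geq 3$; the set $H$ of heavy pairs is triangle-free as a graph, since any three subsets of $[4]$ of size $\geq 3$ automatically satisfy Hall's condition and so admit an SDR, i.e. a rainbow triangle — so $\vert H\vert\leq\lfloor n^2/4\rfloor$ by Mantel's theorem. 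The crude estimate $\sum_ed(e)\leq 4\vert H\vert+2(\binom n2-\vert H\vert)$ loses a constant factor, so I would instead exploit the rigidity forced around a heavy pair: if $\{x,y\}$ is heavy then, because $\vert C(xy)\vert\geq 3$, for every other vertex $z$ either one of $\{x,z\},\{y,z\}$ lies in no $G_i$, or both lie in exactly one $G_i$ and in the same one. Writing $a_k:=\vert\{e:d(e)=k\}\vert$, one has $\sum_ed(e)=2\binom n2+2a_4+a_3-a_1-2a_0$ and $4\lfloor n^2/4\rfloor-2\binom n2\in\{n-1,n\}$, so it suffices to show $2a_4+a_3\leq a_1+2a_0+O(n)$; the structural statement says each heavy pair is surrounded by pairs of multiplicity $0$ or $1$, and a double count of incidences (heavy pair, witnessing vertex $z$), organised by the three outcomes above and using triangle-freeness of $H$ to control overlaps, should yield this inequality, with the $O(n)$ slack coming from the pairs at the endpoints of a heavy pair.

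The main obstacle is precisely this final case. The per-triple inequality behind the $r=3$ argument is genuinely lossy once $r\geq 4$ — a triple consisting of one absent pair and two pairs of full multiplicity is locally legal — so a global argument cannot be avoided; moreover, extracting the exact constant (so as to land on $r\lfloor n^2/4\rfloor$ and to recover both families of extremal examples) will require understanding how closely $H$ can resemble the Mantel graph $T_2(n)$ while the remaining colours stay dense, which is where the sharp configuration $G_i\equiv T_2(n)$ originates.
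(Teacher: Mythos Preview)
The paper does not give its own proof of this theorem: it is quoted as a result of Keevash, Saks, Sudakov and Verstra\"ete, with only the $r=3$ case sketched in a footnote (bound $\sum_{i=1}^3\vert G_i[S]\vert\leq 6$ on each triple $S$, then average). That footnote argument is exactly your $r=3$ proof, your sharpness constructions are the ones the paper mentions, and your reduction from $r\geq 5$ to $r=4$ by deleting a smallest colour class is correct. So on everything the paper actually addresses, you agree with it.

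Your treatment of $r=4$, however, is a genuine gap rather than a missing routine detail --- as you yourself flag. The structural facts you state are correct: heavy pairs (multiplicity $\geq 3$) form a triangle-free graph, and each heavy pair $\{x,y\}$ forces, for every third vertex $z$, either $d(xz)=0$, or $d(yz)=0$, or $d(xz)=d(yz)=1$ with the same colour. But the inequality $2a_4+a_3\leq a_1+2a_0+O(n)$ does not fall out of a direct incidence count of (heavy pair, witnessing vertex), because a single pair of multiplicity $0$ can simultaneously serve as the witness for many heavy pairs: in the extremal template $G_1=\cdots=G_4=T_2(n)$ one has $2a_4-2a_0=n$ exactly, so there is no slack and the bookkeeping of shared witnesses has to be done precisely. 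Turning your structural observation into the sharp inequality requires an additional idea beyond the double count you describe; this is the substantive content of the $r=4$ case in the Keevash--Saks--Sudakov--Verstra\"ete paper, and your outline does not supply it.
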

\noindent The lower bound constructions in Theorem~\ref{theorem: average density forcing rainbow $K_3$} are the trivial ones: for $r\geq 4$, one takes $G_1=G_2=\ldots =G_r=T_2(n)$, while for $r=3$ one takes $G_1=G_2=K_n$ and lets $G_3$ be the empty graph. Given that this latter construction features  an empty colour class, it is natural to ask how the bound in Theorem~\ref{theorem: average density forcing rainbow $K_3$} changes in the $r=3$ case if one requires all three of the colour classes $G_1$, $G_2$ and $G_3$ to be large. This question was first posed by Diwan and Mubayi in a 2006 manuscript~\cite{DiwanMubayi06}: what is the least $\alpha>0$ such that for all $n$ sufficiently large, every $3$-colouring template $\mathbf{G}$ on an $n$-set $V$ with $\min\{\vert E(G_i)\vert: \ 1\leq i\leq 3\}>\alpha n^2$ contains a rainbow triangle? In other words, how large do you need the smallest of the three colour classes to be in order to guarantee the existence of a rainbow triangle?

Magnant~\cite[Theorem 5]{Magnant15} answered this question in 2015 under the assumption that the  union of the colour classes $G_i$, $1\leq i\leq 3$,  covers all pairs in $V$. This assumption may seem natural, insofar as one seeks to make all colour classes large, but it also introduces some very strong restrictions on the colouring template $\mathbf{G}$. Indeed, if $\{x,y\}\in E(G_i)\cap E(G_j)$ and $\{x,z\}\in E(G_i)\cap E(G_k)$ for some distinct indices $1\leq i,j,k \leq 3$, then if the edge $\{y,z\}$ belongs to any of the three colour classes we have a rainbow triangle. Thus Magnant's assumption rules out any vertex being adjacent to two `bi-chromatic edges' with distinct colour pairs. In a 2020 paper, Aharoni, DeVos, de la Maza, Montejanos and \v{S}\'amal~\cite[Theorem 1.2]{AharoniDeVosdelaMazaMontejanoSamalin20} did away with Magnant's technical assumption and answered the question of Diwan and Mubayi in full.  Let $\tau:=\frac{4-\sqrt{7}}{9}$.
\begin{theorem}\label{theorem: min colour density forcing rainbow K3}[Aharoni, DeVos, de la Maza, Montejano and \v{S}\'amal]
For all $n$ sufficiently large, any $n$-vertex $3$-colouring template $\mathbf{G}$ satisfying
\[\min\left\{\vert E(G_1)\vert, \vert E(G_2)\vert, \vert E(G_3)\vert\right\}>\frac{1+\tau^2}{4}n^2\]
contains a rainbow triangle.	
\end{theorem}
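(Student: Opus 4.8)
The plan is to reduce the problem, via a structural analysis of rainbow-triangle-free templates, to a finite optimisation whose optimum is $\frac{1+\tau^2}{4}$. Assume the conclusion fails and take a rainbow-triangle-free $\mathbf{G}=(G_1,G_2,G_3)$ with $\min_i|E(G_i)|>\frac{1+\tau^2}{4}n^2$; among all such, choose one with $\sum_i|E(G_i)|$ maximal. Two local facts will drive the argument. (i) If $\{u,v\}\in E(G_a)\cap E(G_b)$, no vertex $w$ can have $\{u,w\}$ and $\{v,w\}$ coloured so that, together with a colour of $\{u,v\}$, all of $\{1,2,3\}$ is used; thus the endpoints of a multi-coloured edge have very restricted colour-neighbourhoods. (ii) By edge-maximality, for every pair $\{u,v\}\notin E(G_c)$ there is a witness $w$ with $\{u,w\},\{v,w\}$ realising the two colours other than $c$.

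The structural core is a Gallai-partition analogue for templates: I would show $V$ admits a partition into parts $V_1,\dots,V_k$ with $k\geq 2$ such that between any two parts the set of colours occurring is ``simple'' --- a single colour; or a pair of colours that also contains every colour used inside each of the two parts; or all three colours, in which case both parts are internally edgeless --- and the reduced colouring on the parts is rainbow-triangle-free, hence $2$-coloured by Gallai's theorem. The proof is a case analysis: any between-parts pattern not of this form, and any vertex whose triple of colour-degrees is ``spread'' in a manner incompatible with such a pattern, produces either a rainbow triangle or an edge that can be added, contradicting maximality. Recursing inside the parts exhibits $\mathbf{G}$, up to $o(n^2)$ edges, as a construction recursively assembled from a bounded family of small admissible reduced templates.

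This turns the bound into an optimisation: maximise $\min(\delta_1,\delta_2,\delta_3)$ over all admissible recursive schemes, where $\delta_c$ collects the bipartite densities of the between-part pairs coloured $c$ together with the recursively generated within-part contributions to colour $c$. Writing the recursion as a fixed-point system and balancing $\delta_1=\delta_2=\delta_3$, one finds the optimum is attained by a recursive scheme whose governing parameter --- the relative size of a distinguished part --- satisfies $9x^2-8x+1=0$, with relevant root $x=\tau=\frac{4-\sqrt7}{9}$ and optimal value $\frac{1+\tau^2}{4}=\frac{2(1+\tau)}{9}$. The hypothesis that $n$ is large absorbs the $o(n^2)$ rounding error and, through a stability step showing that any near-extremal template is structurally close to the optimal scheme, upgrades the asymptotic inequality to the stated exact one.

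The step I expect to be hardest is the structure theorem. For genuine Gallai colourings of $K_n$ one has Gallai's partition theorem off the shelf, but templates permit both missing and multi-coloured edges, so one must prove ``by hand'', using (i) and (ii), that extremal and near-extremal templates are governed by a bounded list of small reduced patterns, and in particular that no exotic non-blow-up construction beats $\frac{1+\tau^2}{4}n^2$. A secondary, more computational, difficulty is carrying out the nonlinear recursive optimisation precisely enough to obtain the exact algebraic value $\tau$ rather than a numerical estimate. An alternative to a global structure theorem is a local one: track, for each vertex $v$, the sizes and mutual overlaps of its three colour-neighbourhoods; convert rainbow-freeness and maximality into an optimisation over ``colour-degree profiles''; and solve that. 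This replaces the partition theorem by a sharpening of the naive double-counting bound $\sum_v(a_vb_v+b_vc_v+c_va_v)\le n\sum_i|E(G_i)|$ (where $a_v,b_v,c_v$ are the colour-degrees of $v$), which on its own only yields the weaker threshold $n^2/4$.
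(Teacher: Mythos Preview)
Your route is quite different from the one taken by Aharoni, DeVos, de la Maza, Montejano and \v{S}\'amal (and adopted, in generalised form, in this paper). They do \emph{not} prove a Gallai-type partition theorem for templates. Instead, they take a vertex-minimal counterexample, use minimality to rule out rainbow edges (and, more generally, small subsets carrying perfect matchings in all three colours), take a \emph{maximum matching $M$ of bi-chromatic edges}, and partition $V$ into the three sets $V_{ij}$ of vertices covered by $M\cap G_i\cap G_j$ together with a leftover set $D$. Local modifications (replacing bi-chromatic edges by pairs of monochromatic ones, etc.) then force a new template $\mathbf{G}''$ that is highly structured with respect to this partition while losing only $O(n)$ edges per colour. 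The endgame is pure analysis: the partition sizes $a_{12},a_{13},a_{23},d$ must satisfy a finite family of polynomial inequalities, and one shows these are infeasible under $a_{12}+a_{13}+a_{23}+d=1$.

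The gap in your proposal is exactly where you flag it: the structure theorem. Gallai's partition theorem is for proper edge-colourings of $K_n$; in a template, edges may be absent or multi-coloured, and your trichotomy for between-part colour patterns (``single colour; a pair containing all internal colours; all three colours with both parts edgeless'') is asserted rather than proved. Edge-maximality gives you witnesses for each missing colour on each pair, but it does not obviously produce a \emph{global} partition with a $2$-coloured reduced template---and indeed no such template analogue of Gallai's theorem is invoked anywhere in the Aharoni et al.\ proof or in this paper. Without it, the recursion never starts and the optimisation (whose extremum you correctly identify via $9x^2-8x+1=0$) is over an undefined feasible set. Your fallback---sharpening the degree-profile bound $\sum_v(a_vb_v+b_vc_v+c_va_v)\le n\sum_i|E(G_i)|$---is, as you note, too weak on its own, and the bi-chromatic-matching argument is precisely the device that bridges the gap between $n^2/4$ and $\frac{1+\tau^2}{4}n^2$.
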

\noindent Moreover, the lower bound in Theorem~\ref{theorem: min colour density forcing rainbow K3} is tight up to a $O(n)$ additive term, as can be seen by considering the following family of constructions. Set $\left[n\right]:=\{1,2\ldots, n\}$, and write $S^{\left(2\right)}$ for the collection of unordered pairs of elements from a set $S$.
\begin{construction}[$\mathbf{F}(a,b,c)$-templates]\label{construction: F(a,b,c)}
	Let $a$, $b$ and $c$ be non-negative integers with $a+b+c=n$. Arbitrarily partition $\left[n\right]$ as $\left[n\right]=A\sqcup B\sqcup C$, with $\left|A\right|=a$, $\left|B\right|=b$ and $\left|C\right|=c$. Define graphs $F_1$, $F_2$ and $F_3$ on the vertex set $\left[n\right]$
	by setting 
	\begin{align*}
	F_{1}:=A^{\left(2\right)}\cup B^{\left(2\right)},&& F_{2}:=A^{\left(2\right)}\cup C^{\left(2\right)}, \textrm{ and } && F_{3}:=\left[n\right]^{(2)}\setminus A^{\left(2\right)}.\end{align*}
	Write $\mathbf{F}=\mathbf{F}(a,b,c)$ for (any instance of) the $n$-vertex $3$-colouring template $(F_1, F_2, F_3)$.	
\end{construction}
\noindent See Figure~\ref{figure: F and G} for a picture of the $3$-colouring template $\mathbf{F}(a,b,c)$. It is readily checked that $\mathbf{F}$ is rainbow $K_3$-free, and that setting $b=c=\lceil \tau n\rceil$ and $a=n-2\lceil \tau n\rceil$ we have that all three colour classes $F_1$, $F_2$ and $F_3$ contain $\frac{1+\tau^2}{4}n^2 +O(n)$ edges.

The authors of~\cite{AharoniDeVosdelaMazaMontejanoSamalin20} suggested the more general problem of determining which triples of edge densities $(\alpha_1, \alpha_2, \alpha_3)$ force a rainbow triangle~\cite[Problem 1.3]{AharoniDeVosdelaMazaMontejanoSamalin20}.
\begin{definition}[Forcing triple]
	A triple $(\alpha_1, \alpha_2, \alpha_3)\in [0,1]^3$ is a forcing triple if for all $n$ sufficiently large, every $n$-vertex $3$-colouring template $\mathbf{G}$ satisfying $e(G_i)>\min\left(\frac{\alpha_i}{2} n^2, \binom{n}{2}-1\right)$ for $i\in\{1,2,3\}$ must contain a rainbow triangle.
\end{definition}
\noindent In this terminology\footnote{In this paper we use the normalisation term $n^2/2$ instead of the $n^2$ term used in~\cite{AharoniDeVosdelaMazaMontejanoSamalin20} as most of our argument will be written in terms of binomial coefficients $\binom{n}{2}$.}, the authors of~\cite{AharoniDeVosdelaMazaMontejanoSamalin20} proposed the following generalisation of Diwan and Mubayi's question:
\begin{problem}\label{problem: which densities are forcing}
	Determine the set of forcing triples.
\end{problem}
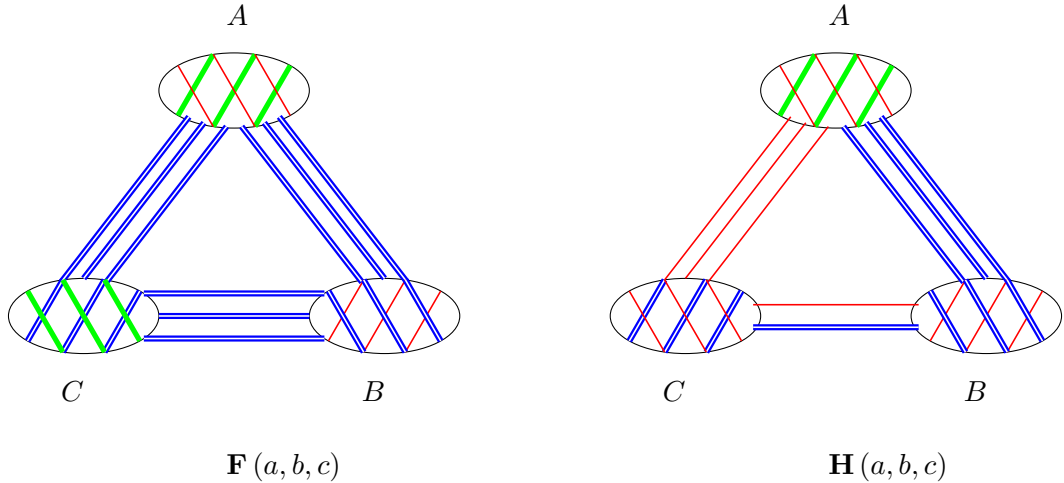
\begin{figure}[H]\label{figure: F and G}
	
	\centering
	\caption{The Gallai colouring templates $\mathbf{F}\left(a,b,c\right)$ and $\mathbf{H}\left(a,b,c\right)$ with red (thin lines), green (thick lines) and blue (doubled lines) representing edges in colours $1$, $2$ and $3$ respectively.}
	\begin{tikzpicture}
	\draw (0,0) ellipse (1cm and 0.5cm);
	\draw (4,0) ellipse (1cm and 0.5cm);
	\draw (2,3) ellipse (1cm and 0.5cm);
	
	\draw [line width = 0.3 mm, blue,double] (0.8, 0.3) -- (3.2, 0.3);
	\draw [line width = 0.3 mm, blue,double] (0.8, -0.3) -- (3.2, -0.3);
	\draw [line width = 0.3 mm, blue,double] (1, 0) -- (3, 0);
	
	\draw [line width = 0.3 mm, blue,double] (-0.3, 0.45) -- (1.4, 2.65);
	\draw [line width = 0.3 mm, blue,double] (0, 0.5) -- (1.6, 1.6*1.294 + 0.5);
	\draw [line width = 0.3 mm, blue,double] (0.3, 0.45) -- (0.3 + 1.6, 1.6*1.294 + 0.45);
	
	\draw [line width = 0.3 mm, blue,double] (4.3, 0.45) -- (2.6, 2.65);
	\draw [line width = 0.3 mm, blue,double] (4, 0.5) -- (2.4, 2.5704);
	\draw [line width = 0.3 mm, blue,double] (3.7, 0.45) -- (2.1, 2.5204);
	
	\draw [line width = 0.3 mm, blue,double] (-0.7467, -0.3326) -- (-0.2773,0.48);
	\draw [line width = 0.3 mm, blue,double] (-0.2773,-0.48) -- (0.2773,0.48);
	\draw [line width = 0.3 mm, blue,double] (0.2773,-0.48) -- (0.7467,0.3326);

	\draw [line width = 0.7 mm, green] (-0.7467, 0.3326) -- (-0.2773,-0.48);
	\draw [line width = 0.7 mm, green] (-0.2773,0.48) -- (0.2773,-0.48);
	\draw [line width = 0.7 mm, green] (0.2773,0.48) -- (0.7467,-0.3326);

	\draw [line width = 0.7 mm, red,semithick] (4-0.7467, -0.3326) -- (4-0.2773,0.48);
	\draw [line width = 0.7 mm, red,semithick] (4-0.2773,-0.48) -- (4+0.2773,0.48);
	\draw [line width = 0.7 mm, red,semithick] (4+0.2773,-0.48) -- (4+0.7467,0.3326);

	\draw [line width = 0.3 mm, blue,double] (4-0.7467, 0.3326) -- (4-0.2773,-0.48);
	\draw [line width = 0.3 mm, blue,double] (4-0.2773,0.48) -- (4+0.2773,-0.48);
	\draw [line width = 0.3 mm, blue,double] (4+0.2773,0.48) -- (4+0.7467,-0.3326);

	\draw [line width = 0.7 mm, green] (2-0.7467, 3-0.3326) -- (2-0.2773,3+0.48);
	\draw [line width = 0.7 mm, green] (2-0.2773,3-0.48) -- (2+0.2773,3+0.48);
	\draw [line width = 0.7 mm, green] (2+0.2773,3-0.48) -- (2+0.7467,3+0.3326);

	\draw [line width = 0.7 mm, red,semithick] (2-0.7467, 3+0.3326) -- (2-0.2773,3-0.48);
	\draw [line width = 0.7 mm, red,semithick] (2-0.2773,3+0.48) -- (2+0.2773,3-0.48);
	\draw [line width = 0.7 mm, red,semithick] (2+0.2773,3+0.48) -- (2+0.7467,3-0.3326);
	
	\draw (8,0) ellipse (1cm and 0.5cm);
	\draw (12,0) ellipse (1cm and 0.5cm);
	\draw (10,3) ellipse (1cm and 0.5cm);
	
	\draw [line width = 0.7 mm, red,semithick] (8+0.9, 0.15) -- (8+3.1, 0.15);
	\draw [line width = 0.3 mm, blue,double] (8+0.9, -0.15) -- (8+3.1, -0.15);
	
	\draw [line width = 0.7 mm, red,semithick] (8-0.3, 0.45) -- (8+1.4, 2.65);
	\draw [line width = 0.7 mm, red,semithick] (8+0, 0.5) -- (8+1.6, 1.6*1.294 + 0.5);
	\draw [line width = 0.7 mm, red,semithick] (8+0.3, 0.45) -- (8+0.3 + 1.6, 1.6*1.294 + 0.45);
	
	\draw [line width = 0.3 mm, blue,double] (8+4.3, 0.45) -- (8+2.6, 2.65);
	\draw [line width = 0.3 mm, blue,double] (8+4, 0.5) -- (8+2.4, 2.5704);
	\draw [line width = 0.3 mm, blue,double] (8+3.7, 0.45) -- (8+2.1, 2.5204);
	
	\draw [line width = 0.3 mm, blue,double] (8-0.7467, -0.3326) -- (8-0.2773,0.48);
	\draw [line width = 0.3 mm, blue,double] (8-0.2773,-0.48) -- (8+0.2773,0.48);
	\draw [line width = 0.3 mm, blue,double] (8+0.2773,-0.48) -- (8+0.7467,0.3326);

	\draw [line width = 0.7 mm, red,semithick] (8-0.7467, 0.3326) -- (8-0.2773,-0.48);
	\draw [line width = 0.7 mm, red,semithick] (8-0.2773,0.48) -- (8+0.2773,-0.48);
	\draw [line width = 0.7 mm, red,semithick] (8+0.2773,0.48) -- (8+0.7467,-0.3326);

	\draw [line width = 0.7 mm, red,semithick] (8+4-0.7467, -0.3326) -- (8+4-0.2773,0.48);
	\draw [line width = 0.7 mm, red,semithick] (8+4-0.2773,-0.48) -- (8+4+0.2773,0.48);
	\draw [line width = 0.7 mm, red,semithick] (8+4+0.2773,-0.48) -- (8+4+0.7467,0.3326);

	\draw [line width = 0.3 mm, blue,double] (8+4-0.7467, 0.3326) -- (8+4-0.2773,-0.48);
	\draw [line width = 0.3 mm, blue,double] (8+4-0.2773,0.48) -- (8+4+0.2773,-0.48);
	\draw [line width = 0.3 mm, blue,double] (8+4+0.2773,0.48) -- (8+4+0.7467,-0.3326);

	\draw [line width = 0.7mm, green] (8+2-0.7467, 3-0.3326) -- (8+2-0.2773,3+0.48);
	\draw [line width = 0.7mm, green] (8+2-0.2773,3-0.48) -- (8+2+0.2773,3+0.48);
	\draw [line width = 0.7mm, green] (8+2+0.2773,3-0.48) -- (8+2+0.7467,3+0.3326);

	\draw [line width = 0.7mm, red,semithick] (8+2-0.7467, 3+0.3326) -- (8+2-0.2773,3-0.48);
	\draw [line width = 0.7mm, red,semithick] (8+2-0.2773,3+0.48) -- (8+2+0.2773,3-0.48);
	\draw [line width = 0.7mm, red,semithick] (8+2+0.2773,3+0.48) -- (8+2+0.7467,3-0.3326);
	
	\node[text width=1cm] at (2.4,4){$A$};
	\node[text width=1cm] at (0.2,-1){$C$};
	\node[text width=1cm] at (4.2,-1){$B$};

	\node[text width=1cm] at (8+2.4,4){$A$};
	\node[text width=1cm] at (8+0.2,-1){$C$};
	\node[text width=1cm] at (8+4.2,-1){$B$};
	
	\node[text width=1cm] at (2.4, -2){$\mathbf{F}\left(a,b,c\right)$};
	\node[text width=1cm] at (10.4, -2){$\mathbf{H}\left(a,b,c\right)$};
	
	\end{tikzpicture}
\end{figure}

Recently Frankl~\cite[Theorem 1.4]{Frankl22} gave a new proof of Theorem~\ref{theorem: average density forcing rainbow $K_3$} on the maximum arithmetic mean of the sizes of the colour classes in a Gallai $r$-colouring template, and raised the problem of maximising the geometric mean of the sizes of the colour classes for such templates in the case\footnote{For $r\geq 4$, the AM--GM inequality together with Theorem~\ref{theorem: average density forcing rainbow $K_3$} immediately implies the geometric mean of the colour classes in a Gallai $r$-colouring template is at most $\lfloor\frac{n^2}{4}\rfloor$ for all $n$ sufficiently large, so the case $r=3$ is the only one for which this question is open.}  $r=3$. This can be viewed as a different way of forcing all three colour classes $G_1$, $G_2$ and $G_3$ to be (reasonably) large, and of moving away from the extremal construction where two of the colour classes are complete and the third is empty.

Frankl proved an upper bound of $\left\lfloor \frac{n^2}{4}\right\rfloor$ on this geometric mean under the assumption that the colour classes were nested~\cite[Theorem 1.5]{Frankl22}. This result is tight under the nestedness assumption: a lower bound construction is obtained by taking three identical copies of $T_2(n)$ for the three colour classes.

Frankl conjectured that his upper bound on the geometric mean was tight in general, without the nestedness assumption on the colour classes~\cite[Conjecture 3]{Frankl22}. This was subsequently disproved by Frankl, Gy\H{o}ri, He, Lv, Salia, Tompkins, Varga and Zhu, who provided a different construction, which they conjectured~\cite[Conjecture 2]{FranklGyoriHeLvSaliaTompkins22} maximises the geometric mean of the sizes of the colour classes in a Gallai $3$-colouring template. Their construction turns out to be a special case of a more general construction that will play a key role in this paper, and which we define below. Write $(S, T)^{(2)}$ for the collection of unordered pairs taking one vertex from each of $S$ and $T$.
\begin{construction}[$\mathbf{H}(a,b,c)$-templates]\label{construction: H(a,b,c)}
	Let $a$, $b$ and $c$ be non-negative integers with $a+b+c=n$. Arbitrarily partition $\left[n\right]$ as $\left[n\right]=A\sqcup B\sqcup C$, with $\left|A\right|=a$, $\left|B\right|=b$ and $\left|C\right|=c$. Define graphs $H_1$, $H_2$
	and $H_3$ on the vertex set $\left[n\right]$
	by setting 
	\begin{align*}
	H_{1}:=A^{\left(2\right)}\cup\left(B\cup C\right)^{\left(2\right)}\cup\left(A,C\right)^{\left(2\right)},&& H_{2}:=A^{\left(2\right)}, \textrm{ and } && H_{3}:=\left(B\cup C\right)^{\left(2\right)}\cup\left(A,B\right)^{\left(2\right)}.\end{align*}
	Write $\mathbf{H}=\mathbf{H}(a,b,c)$ for (any instance of) the $n$-vertex $3$-colouring template $(H_1, H_2, H_3)$.	
\end{construction}
\noindent See Figure~\ref{figure: F and G} for a picture of the $3$-colouring template $\mathbf{H}(a,b,c)$. The special case $c=0$, $b=n-a$ corresponds to the constuction provided by the authors of~\cite{FranklGyoriHeLvSaliaTompkins22}. It is readily checked that $\mathbf{H}$ is rainbow $K_3$-free. Let $\upsilon$ denote the value of $x\in [0,1]$ maximising the value of the function 
\[h: \ x\mapsto \left(x^2+(1-x)^2\right)x^2\left(1-x^2\right).\]
\noindent The value of $\upsilon$ may be computed explicitly, though the exact form is not pleasant. Numerically, we have $\upsilon\approx 0.7927$ and $h(\upsilon)\approx 0.1568$. Setting $a=\lceil \upsilon n\rceil$, $b=n-a$ and $c=0$, we have that 
\[\Bigl(\vert E(H_1)\vert \cdot \vert E(H_2)\vert \cdot \vert E(H_3)\vert\Bigr)^{\frac{1}{3}}=\left(h(\upsilon)+o(1)\right)^{\frac{1}{3}} \frac{n^2}{2}= \left(0.5392+o(1)\right) \frac{n^2}{2},\]
which is significantly larger than $\left\lfloor \frac{n^2}{4}\right\rfloor$ for all $n$ sufficiently large. Thus, as noted by the authors of~\cite{FranklGyoriHeLvSaliaTompkins22}, the Gallai $3$-colouring template $\mathbf{H}$ for these values of $a$, $b$ and $c$ provides a counterexample to the aforementioned conjecture of Frankl.  However they conjectured~\cite[Conjecture 2]{FranklGyoriHeLvSaliaTompkins22} that asymptotically one could not do better than the $\mathbf{H}(\lceil \upsilon n\rceil, n-\lceil \upsilon n \rceil, 0)$ Gallai $3$-colouring template:
\begin{conjecture}[Frankl, Gy\H{o}ri, He, Lv, Salia, Tompkins, Varga and Zhu]\label{conjecture: product extremal}
Let $\mathbf{G}$ be a Gallai $3$-colouring template on $n$ vertices. Then
\[\Bigl( \vert E(G_1)\vert \cdot \vert E(G_2)\vert \cdot \vert E(G_3)\vert\Bigr)^{\frac{1}{3}} \leq \left(h(\upsilon)+o(1)\right)^{\frac{1}{3}} \binom{n}{2}.\]	
\end{conjecture}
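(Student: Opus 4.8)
The plan is to obtain Conjecture~\ref{conjecture: product extremal} as a corollary of our main theorem, which pins down the set of forcing triples exactly, and hence describes explicitly the \emph{non-forcing region}
\[
\mathcal{N}:=\bigl\{(\alpha_1,\alpha_2,\alpha_3)\in[0,1]^3:\ (\alpha_1,\alpha_2,\alpha_3)\text{ is not a forcing triple}\bigr\}.
\]
Since deleting edges only decreases densities, $\mathcal{N}$ is a down-set, and by the main theorem it is closed (being the down-closure of a compact family of explicit templates). A short compactness argument then shows that for every $\eps>0$ there is an $n_0$ such that every Gallai $3$-colouring template $\mathbf{G}$ on $n\geq n_0$ vertices has density vector $\bigl(e(G_1),e(G_2),e(G_3)\bigr)/\binom{n}{2}$ within sup-norm distance $\eps$ of $\mathcal{N}$: if it were at distance at least $\eps$, then shrinking each coordinate by $\eps/2$ would (for $n$ large) yield a triple that is still outside $\mathcal{N}$, hence forcing, yet is dominated by the density vector of $\mathbf{G}$, forcing a rainbow triangle and contradicting the hypothesis on $\mathbf{G}$. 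The uniformity in $n$ here is exactly what the ``$n$ sufficiently large'' clause in the definition of a forcing triple supplies. Combined with the continuity of $(\alpha_1,\alpha_2,\alpha_3)\mapsto\alpha_1\alpha_2\alpha_3$ on the compact cube, this reduces the conjecture to the optimisation statement
\[
\max\bigl\{\alpha_1\alpha_2\alpha_3:\ (\alpha_1,\alpha_2,\alpha_3)\in\mathcal{N}\bigr\}=h(\upsilon).
\]

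The inequality ``$\geq$'' is witnessed by $\mathbf{H}\bigl(\lceil\upsilon n\rceil,n-\lceil\upsilon n\rceil,0\bigr)$, whose density vector tends to $\bigl(\upsilon^2+(1-\upsilon)^2,\ \upsilon^2,\ 1-\upsilon^2\bigr)\in\mathcal{N}$ with coordinate product $h(\upsilon)$, as already recorded above. For ``$\leq$'' we invoke the explicit description of $\mathcal{N}$ furnished by the main theorem: $\mathcal{N}$ is the down-closure of the limiting density vectors of the constructions $\mathbf{F}(xn,yn,zn)$ and $\mathbf{H}(xn,yn,zn)$ (together with their colour-permutations, and any further extremal family that appears in the statement) over the simplex $\{x+y+z=1,\ x,y,z\geq 0\}$. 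Since $\alpha_1\alpha_2\alpha_3$ is coordinatewise monotone, its maximum over a down-closure is attained on the generating surface, so it suffices to maximise over that simplex the coordinate products
\[
(x^2+y^2)(x^2+z^2)(1-x^2)\qquad\text{and}\qquad x^2\bigl(x^2+(1-x)^2+2xz\bigr)\bigl((1-x)^2+2xy\bigr)
\]
arising from $\mathbf{F}$ and $\mathbf{H}$ respectively. In each case we first fix $x$ and optimise over the split $y+z=1-x$, which is a one-variable quadratic problem: for $\mathbf{H}$, the substitution $u=2xy$ turns the first factor into $1-u$ and the second into $(1-x)^2+u$ using the identity $x^2+(1-x)^2+2x(1-x)=1$, so the optimal split is immediate to locate. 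One checks that in both families the optimal split has $z=0$ (equivalently $y=0$), at which point $\mathbf{F}$ and $\mathbf{H}$ degenerate to the \emph{same} density vector $\bigl(x^2+(1-x)^2,\ x^2,\ 1-x^2\bigr)$, whose product is exactly $h(x)$; maximising $h$ over $x\in[0,1]$ gives $h(\upsilon)$ by the definition of $\upsilon$.

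The real work, and the main obstacle, is the main theorem itself --- the exact determination of the forcing region $\mathcal{N}$ --- of which Conjecture~\ref{conjecture: product extremal} is a short consequence once $\mathcal{N}$ is in hand. In the corollary itself the only points needing (mild) care are: verifying that the interior critical branches of the auxiliary one-variable polynomials --- for instance the branch of the $\mathbf{H}$-product with $z>0$ that is relevant when $x\leq 2/3$ --- never exceed $h(\upsilon)$, which is a routine comparison of values at the finitely many critical points and at the endpoints $x\in\{0,2/3,1\}$ of the relevant subintervals; and checking, in the compactness step, that a fixed coordinatewise shrinkage by $\eps$ keeps the density vector outside $\mathcal{N}$ while remaining dominated by $\mathbf{G}$, which holds because $\mathcal{N}$ is a down-set. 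No idea beyond the main theorem is required.
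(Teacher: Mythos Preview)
Your proposal is correct and follows essentially the same route as the paper: deduce the conjecture from the main theorem by reducing to the product maximisation over the density vectors of the extremal templates $\mathbf{F}$ and $\mathbf{H}$, and then carry out the two one-variable calculus optimisations that both collapse to $h(x)$ along the common boundary $z=0$.

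Two small remarks on presentation. First, the paper bypasses your compactness step: rather than arguing that the density vector of $\mathbf{G}$ lies within $\eps$ of $\mathcal{N}$, it uses Theorem~\ref{theorem: forcing densities}(a)(ii) and (b)(ii) directly to get an explicit upper bound (with a $+Cn$ error) on $|E(G_3)|$ in terms of $|E(G_1)|,|E(G_2)|$ whenever $(\alpha_1,\alpha_2)$ lies in $\mathcal{R}'_1\cup\mathcal{R}_2$, and this already pins $\alpha_3$ to the $\mathbf{F}$- or $\mathbf{H}$-value up to $O(1/n)$. Second, your sentence ``in both families the optimal split has $z=0$'' is not literally true for $\mathbf{F}$ when $x$ is small (e.g.\ at $x=0$ the maximum of $y^2z^2$ under $y+z=1$ is interior); the paper sidesteps this by first restricting to the region $\mathcal{R}'_1$, where the canonical representation forces $x\geq \tfrac12$, and there $\partial f_F/\partial y\geq 0$ so the boundary $y=1-x$ is optimal. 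You correctly flag the need to check the interior branches, and they indeed stay below $h(\upsilon)$, but restricting the domain first as the paper does makes the verification cleaner.
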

\noindent The authors of~\cite{FranklGyoriHeLvSaliaTompkins22} proved their conjecture under the assumption that the union of the colour classes covers the entire graph~\cite[Theorem 2]{FranklGyoriHeLvSaliaTompkins22}  --- the same assumption made earlier by Magnant, and which, as we remarked above, is both natural and highly restrictive in terms of the possible structure of $\mathbf{G}$.

\subsection{Results}
In the present work we fully resolve Problem~\ref{problem: which densities are forcing}. This asymptotically generalises previous Tur\'an-type results for Gallai $3$-colouring templates (Theorem~\ref{theorem: average density forcing rainbow $K_3$} and Theorem~\ref{theorem: min colour density forcing rainbow K3}), and settles Conjecture~\ref{conjecture: product extremal} in the affirmative. To state our result, we must define three regions in $[0,1]^2$.

\begin{definition}
 Let $\mathcal{R}_1$ denote the collection of $(\alpha_1, \alpha_2)\in [0,1]^2$ satisfying:
 \begin{align*}
 \max\left(1-\alpha_2, \frac{1+\tau^2}{2}, \alpha_2 \right)\leq \alpha_1 \leq 1-2\sqrt{\alpha_2}+2\alpha_2.
 \end{align*}
For $(\alpha_1, \alpha_2)\in \mathcal{R}_1$ there exists\footnote{The existence of this pair is proved in Proposition~\ref{prop: existence and uniqueness of the x,y}.} a unique pair $(x,y)$ of non-negative real numbers such that $x\geq\frac{1}{2}$, $x+y\leq 1$ and $\alpha_1=x^2+y^2$, $\alpha_2=x^2+(1-x-y)^2$; we refer to this pair as the \emph{canonical representation} of $(\alpha_1, \alpha_2)\in \mathcal{R}_1$.  We define $\mathcal{R}'_1$ to be the collection of $(\alpha_1, \alpha_2)\in \mathcal{R}_1$ whose canonical representation $(x,y)$ satisfies $2x^2+(1-x-y)^2\geq 1$.
\end{definition}
\begin{remark}
We can in principle compute the canonical pair $(x,y)$ explicitly from $(\alpha_1, \alpha_2)$: setting $y=\sqrt{\alpha_1-x^2}$, we need $x$ to be a solution in $[\frac{1}{2},\sqrt{\alpha_1}]$ to the equation
\begin{align}\label{eq: computing canonical pair}
	\alpha_1-\alpha_2 = (1-x)\left(x+2 \sqrt{\alpha_1 -x^2} - 1\right)\end{align}
while satisfying $x+\sqrt{\alpha_1-x^2}\leq 1$.	Now, \eqref{eq: computing canonical pair} can be rewritten as a quartic equation
	\[\left(\alpha_1-\alpha_2+(1-x)^2\right)^2 =4(1-x)^2(\alpha_1-x^2) ,\]
	whose solutions can be computed explicitly via radicals in terms of $\alpha_1$ and $\alpha_2$. Further, as we show in Proposition~\ref{prop: existence and uniqueness of the x,y}, for $(\alpha_1, \alpha_2)\in\mathcal{R}_1$, there exists a unique such solution $x_{\star}=x_{\star}(\alpha_1, \alpha_2)$ in the interval $[\frac{1}{2},1]$, and that setting $y_{\star}=\sqrt{\alpha_1 -(x_{\star}) ^2 }$ we have $x_{\star}\leq \sqrt{\alpha_1}$ and $x_{\star}+y_{\star}\leq 1$, yielding the canonical pair $(x_{\star}, y_{\star})$. The boundary between $\mathcal{R}'_1$ and  $\mathcal{R}_1\setminus \mathcal{R}'_1$ then corresponds to the solutions $(\alpha_1, \alpha_2)\in \mathcal{R}_1$ to the equation
	\[2(x_{\star}(\alpha_1, \alpha_2))^2+(1-x_{\star}(\alpha_1, \alpha_2)-\sqrt{\alpha_1 -(x_{\star}(\alpha_1, \alpha_2))^2})^2=1.\]
\end{remark}
\begin{definition}
Let $\mathcal{R}_2$ denote the collection of $(\alpha_1, \alpha_2)\in[0,1]^2$ satisfying
\begin{align*}
\alpha_1 \geq \max\left(2-2\sqrt{\alpha_2}, 1-2\sqrt{\alpha_2}+2\alpha_2\right).
\end{align*}	
\end{definition} 
\noindent Note that for all pairs $(\alpha_1, \alpha_2)\in \mathcal{R}_1\cup \mathcal{R}_2$ we have $\frac{1}{4}\leq \alpha_2\leq \alpha_1$ and $\frac{1}{2}< \alpha_1$.
See Figure~\ref{figure:Rregions} for a picture of the regions $\mathcal{R}'_1$ and $\mathcal{R}_2$.
\begin{figure}\centering
	\includegraphics[scale=0.8]{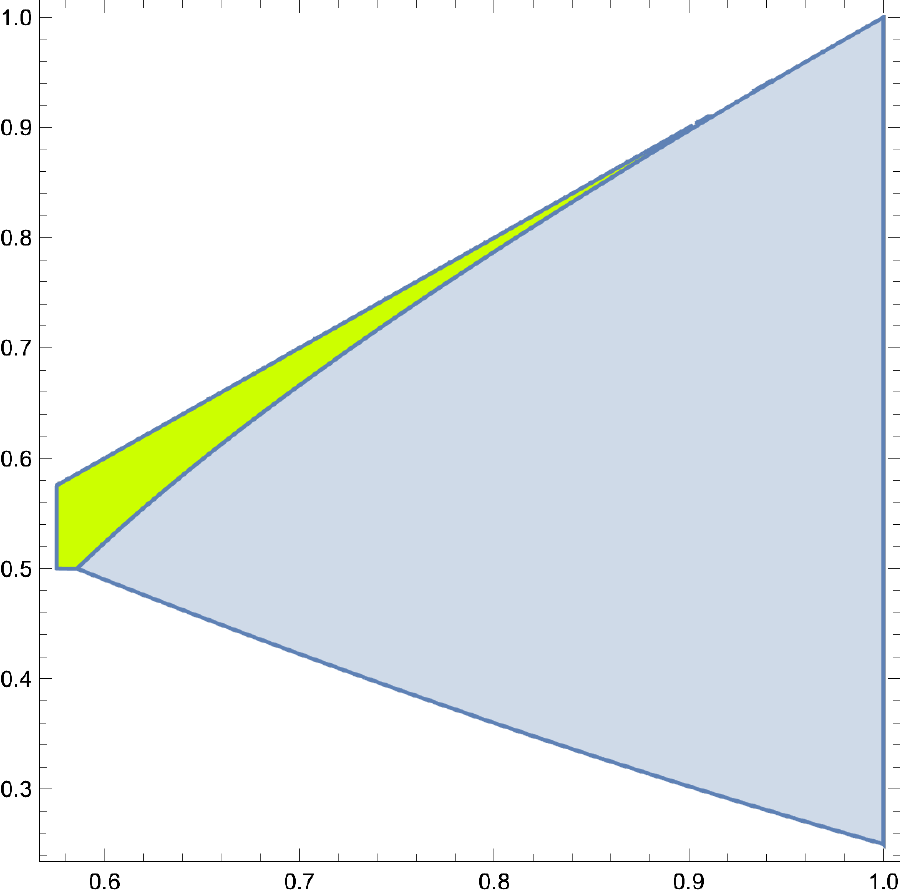}
	\caption{The regions $\mathcal{R}'_1$ (upper part, in green) and $\mathcal{R}_2$ (lower part, in blue) in the $(\alpha_1, \alpha_2)$ plane.}\label{figure:Rregions}
\end{figure}

Before stating our main result, we record a useful observation of Aharoni et al~\cite{AharoniDeVosdelaMazaMontejanoSamalin20}. Suppose that there exists an $N$-vertex Gallai colouring template $\mathbf{G}$ with no rainbow triangle  satisfying $e(G_i) = \frac{\alpha_i}{2} N^2 + \varepsilon_i N^2$ for each $i\in \{1,2,3\}$, where the $\varepsilon_i$ are strictly positive real numbers. Write $\mathbf{G(k)}$ for the balanced blow-up of $\mathbf{G}$ obtained by replacing each vertex $v$ of $\mathbf{G}$ by a set of $k$ vertices $X_v$ and for each $i$ replacing each edge $uv\in E(G_i)$ by a complete balanced bipartite graph between $X_u$ and $X_v$. Then for any $C > 0$ and all $k$ sufficiently large, we have 
\[e(G(k)_i) =\frac{1}{2}\alpha_{i}(kN)^2 + \varepsilon_i (kN)^2 > \alpha_{i} \binom{kN}{2} + CkN.\]
Since $\mathbf{G}(k)$ is rainbow triangle-free, this implies the existence of Gallai colouring templates $\mathbf{J}$ on $n>N$ vertices with  $e(J_i) > \alpha_i \binom{n}{2} + Cn$ for each $i\in\{1,2,3\}$. In particular, it is enough to resolve Problem~\ref{problem: which densities are forcing} up to additive linear terms and with the normalisation factor $n^2$ replaced by the more conventional factor $\binom{n}{2}$.

With this observation in place, we can now state our main result: for any pair of densities $1\geq \alpha_1\geq \alpha_2\geq 0$, we determine the least $\alpha_3\leq \alpha_2$ such that $(\alpha_1, \alpha_2, \alpha_3)$ is a forcing triple.
\begin{theorem}\label{theorem: forcing densities}
There exists a constant $C>0$ such that for any $(\alpha_1, \alpha_2)\in [0,1]^2$ with $\alpha_1\geq \alpha_2$, the following hold.
\begin{enumerate}[(a)]
	\item If $(\alpha_1, \alpha_2) \in \mathcal{R}'_1$, then letting $(x,y)$ be its canonical representation and setting $\alpha_3:=1-x^2$, we have that: \begin{enumerate}[(i)]
		\item $\alpha_2 \geq \alpha_3$;
		\item for any $n\in \mathbb{N}$, if $\mathbf{G}$ is an $n$-vertex $3$-colouring template with $\vert E(G_i)\vert \geq \alpha_i\binom{n}{2}+Cn$ for all $i\in [3]$, then $\mathbf{G}$ contains a rainbow triangle;
		\item for any $n\in \mathbb{N}$, setting $a=\lfloor xn\rfloor$, $b=\lfloor yn \rfloor$ and $c=n-a-b$, the $n$-vertex $3$-colouring template $\mathbf{F}(a,b,c)$ satisfies $\vert E(F_i)\vert \geq \alpha_i\binom{n}{2}-Cn$ and contains no rainbow triangle.
	\end{enumerate}
	In particular, any triple $(\alpha_1', \alpha_2', \alpha_3')$ with $\alpha_i'<\alpha_i$ or $\alpha_i'=0$  for every $i\in \{1,2,3\}$ is not a forcing triple, while every triple $(\alpha_1', \alpha_2', \alpha_3')$ with $\alpha_i'>\alpha_i$ or $\alpha_i'=1$ for every $i\in \{1,2,3\}$ is a forcing triple.
	
	\item  If $(\alpha_1, \alpha_2) \in \mathcal{R}_2$, then setting $\alpha_3 :=2-\alpha_1-2\sqrt{\alpha_2}+\alpha_2$, we have that:
	\begin{enumerate}[(i)]
	\item $\alpha_2\geq \alpha_3$;
\item  for any $n\in \mathbb{N}$, if $\mathbf{G}$ is an $n$-vertex $3$-colouring template with $\vert E(G_i)\vert \geq \alpha_i\binom{n}{2}+Cn$ for all $i\in [3]$, then $\mathbf{G}$ contains a rainbow triangle;
	\item   for any $n\in \mathbb{N}$, setting $a=\lfloor \sqrt{\alpha_2}n \rfloor$, $b=\lfloor \frac{1-\alpha_1}{2\sqrt{\alpha_2}}n \rfloor$ and $c=n-a-b$, the $n$-vertex $3$-colouring template $\mathbf{H}(a,b,c)$ satisfies $\vert E(H_i)\vert \geq \alpha_i\binom{n}{2}-Cn$ and contains no rainbow triangle.
\end{enumerate}
	In particular, any triple $(\alpha_1', \alpha_2', \alpha_3')$ with $\alpha_i'<\alpha_i$ or $\alpha_i'=0$ for every $i\in \{1,2,3\}$ is not a forcing triple, while every triple $(\alpha_1', \alpha_2', \alpha_3')$ with $\alpha_i'>\alpha_i$ or $\alpha_i'=1$ for every $i\in \{1,2,3\}$ is a forcing triple.
\item If $(\alpha_1, \alpha_2)\notin \mathcal{R}'_1\cup \mathcal{R}_2$, then $(\alpha_1, \alpha_2, \alpha_2)$ is not a forcing triple.
\end{enumerate}
\end{theorem}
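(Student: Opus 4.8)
# Proof Proposal

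The plan is to prove Theorem~\ref{theorem: forcing densities} by separately handling the upper bound (existence of rainbow-triangle-free templates, parts (iii) and (c)) and the lower bound (forcing, part (ii)). The constructions in parts (iii) and (c) are already given explicitly, so the first and easiest task is to verify that the templates $\mathbf{F}(a,b,c)$ and $\mathbf{H}(a,b,c)$ with the prescribed parameters have the claimed colour-class sizes $\alpha_i\binom{n}{2} - Cn$; this is a routine computation counting edges in $A^{(2)}$, $B^{(2)}$, $(A,C)^{(2)}$, etc., using $\vert A\vert = a = \lfloor xn\rfloor$ and so on, and checking that the $o(n^2)$ rounding errors are absorbed into $Cn$. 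Simultaneously I would establish the inequalities $\alpha_2\geq\alpha_3$ in (a)(i) and (b)(i): for (a), $\alpha_2\geq\alpha_3$ unpacks to $x^2+(1-x-y)^2\geq 1-x^2$, i.e. exactly the defining condition $2x^2+(1-x-y)^2\geq 1$ of $\mathcal{R}_1'$; for (b) it reduces to an elementary inequality in $\alpha_1,\alpha_2$ on the region $\mathcal{R}_2$. I would also verify rainbow-triangle-freeness of $\mathbf{F}$ and $\mathbf{H}$ (asserted already in the text, but a clean one-line argument should be recorded), and use the blow-up observation from the excerpt to pass from $n^2$-normalisation to $\binom{n}{2}$.

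The heart of the paper is part (ii), the forcing statement, and this is where I expect the main difficulty to lie. The strategy I would adopt is a stability-plus-cleaning argument built on the Gallai structure theorem: if $\mathbf{G}$ has no rainbow triangle, then there is a partition $V = V_1\sqcup\cdots\sqcup V_k$ with $k\geq 2$ such that at most two colours appear between parts, and the ``reduced'' template on $[k]$ is itself a Gallai colouring using only two colours (hence essentially $2$-coloured). I would first reduce to the case where this reduced object is as simple as possible — ideally $k=2$ — by an extremality/averaging argument: one shows that refining or merging parts can only help, so the extremal configuration has a bounded-complexity reduced template, and within each part one may recurse or apply the two-colour structure. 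The two-colour reduced problem is governed by an optimisation: we are choosing how to split $V$ into blocks and assign the ``third colour'' $G_3$-edges (which must live inside blocks or along one of the two inter-block colours in a constrained way) so as to maximise $\min_i \vert E(G_i)\vert / \alpha_i$; the templates $\mathbf{F}$ and $\mathbf{H}$ should emerge as the two families of optimisers, corresponding respectively to the regimes $\mathcal{R}_1'$ and $\mathcal{R}_2$. Concretely, for a three-part partition $A\sqcup B\sqcup C$ one writes $\vert E(G_i)\vert$ in terms of the normalised part sizes $x = \vert A\vert/n$, $y=\vert B\vert/n$ and the edge densities within parts, then shows that the feasibility region ``$\exists$ rainbow-free $\mathbf{G}$ with these densities'' is exactly the complement (up to $o(1)$) of $\{\alpha_i > \alpha_i^{\star}\}$ where $\alpha^\star$ is as in the theorem.

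For the quantitative lower bound with the explicit constant $Cn$, rather than a soft compactness argument I would run a direct extremal argument: assume $\vert E(G_i)\vert\geq\alpha_i\binom{n}{2}+Cn$ and no rainbow triangle, take the Gallai partition, and track how each of the three colour-class sizes is bounded by the corresponding expression in the part sizes plus a controlled error. The key lemma will be a purely analytic statement: on the domain of feasible $(x,y)$ and within-part parameters, the vector of achievable densities $(\alpha_1,\alpha_2,\alpha_3)$ lies below the surface defined by $\mathcal{R}_1'$ (via the canonical representation $\alpha_3 = 1-x^2$) and $\mathcal{R}_2$ (via $\alpha_3 = 2-\alpha_1-2\sqrt{\alpha_2}+\alpha_2$), with the max over the two giving the true boundary; I would prove this by Lagrange multipliers / boundary analysis, using Proposition~\ref{prop: existence and uniqueness of the x,y} to guarantee the canonical pair is well-defined and unique. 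The main obstacle, I expect, is not any single inequality but the bookkeeping needed to show that \emph{every} rainbow-triangle-free template — including those whose Gallai partition has more than three parts, or whose colour classes are not ``aligned'' with the partition — is dominated by one of the two extremal families; handling the case where some $G_i$ has many edges inside parts versus spread across the reduced structure, and ruling out hybrid configurations that might beat both $\mathbf{F}$ and $\mathbf{H}$, will require the bulk of the work.

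Finally, part (c) follows by observing that if $(\alpha_1,\alpha_2)\notin\mathcal{R}_1'\cup\mathcal{R}_2$ then — given the monotonicity of the problem in $\alpha_3$ and the fact (from the remark after the definition of $\mathcal{R}_2$) that $\mathcal{R}_1\cup\mathcal{R}_2$ already captures all pairs with $\alpha_1>\tfrac12$ and $\tfrac14\leq\alpha_2\leq\alpha_1$ for which a nontrivial threshold exists — one can exhibit a rainbow-triangle-free template with all three colour classes of density $\geq\alpha_2\binom{n}{2}-Cn$; concretely, for such $(\alpha_1,\alpha_2)$ an appropriate instance of $\mathbf{F}(a,b,c)$ or $\mathbf{H}(a,b,c)$ (or the trivial $T_2(n)$-triple when $\alpha_2\leq\tfrac14$) already achieves $(\alpha_1,\alpha_2,\alpha_2)$ up to linear error, so $(\alpha_1,\alpha_2,\alpha_2)$ is not forcing.
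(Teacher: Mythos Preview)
Your treatment of the construction side (parts (i), (iii), and (c)) is essentially in line with the paper: these reduce to straightforward edge counts for $\mathbf{F}(a,b,c)$ and $\mathbf{H}(a,b,c)$ together with the defining inequalities of $\mathcal{R}'_1$ and $\mathcal{R}_2$, and the paper handles them in exactly this way via Proposition~\ref{prop: construction analysis} and Proposition~\ref{prop: trivial conditions}.

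The real issue is with your plan for part (ii). You propose to build the forcing argument on the Gallai structure theorem: take a Gallai partition $V=V_1\sqcup\cdots\sqcup V_k$ with at most two colours between parts, reduce to a small $k$, and optimise. But Gallai's decomposition is a statement about proper edge-colourings of the \emph{complete} graph, and neither hypothesis is available here. A $3$-colouring template allows the colour classes to overlap (and the extremal templates $\mathbf{F}$, $\mathbf{H}$ genuinely do: in $\mathbf{F}$ every pair inside $A$ lies in $F_1\cap F_2$), and $\bigcup_i G_i$ need not cover all of $V^{(2)}$. The paper itself emphasises that assuming $\bigcup_i G_i=V^{(2)}$ is ``both natural and highly restrictive'', and that earlier work (Magnant; Frankl--Gy\H{o}ri--He--Lv--Salia--Tompkins--Varga--Zhu) relied on exactly that restriction. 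So the structural backbone your argument rests on is not available, and there is no evident reduction from templates to Gallai-decomposable colourings that preserves the edge counts.

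What the paper actually does for (ii) is quite different. It proves two technical theorems (Theorems~\ref{theorem: technical version, easy case} and~\ref{theorem: technical version, hard case}) by taking a vertex-minimal counterexample $\mathbf{G}$---in the $\mathcal{R}_2$ case additionally maximising $\vert G_1\vert$---and exploiting minimality to rule out rainbow edges (and, in the hard case, to force $G_2\cup G_3\subseteq G_1$). The structural tool replacing Gallai's decomposition is a \emph{maximum matching $M$ of bi-chromatic edges}: this yields a partition $V=V_{12}\sqcup V_{13}\sqcup V_{23}\sqcup D$, and a sequence of local modifications (borrowed from and extending Aharoni--DeVos--de la Maza--Montejano--\v{S}\'amal) produces a cleaned template $\mathbf{G}''$ whose colour class sizes are controlled in terms of the part sizes $a_{ij},d$. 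The contradiction then comes from a system of inequalities in $a_{12},a_{13},a_{23},d$ (Lemma~\ref{lemma: easy case, inequalities for the aij} and Lemma~\ref{lemma: hard case, contradiction}), not from a block optimisation over a Gallai partition. If you want to pursue your route, you would first need a substitute for Gallai's theorem valid for templates; absent that, the bi-chromatic matching approach is the mechanism that makes the argument go through.
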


\begin{remark}
Note that $\mathcal{R}'_1$ and $\mathcal{R}_2$ meet along the curve $\alpha_1=1-2\sqrt{\alpha_2}+2\alpha_2$ from the point $(\frac{1}{2},2-\sqrt{2})$  to the point $(1,1)$ --- indeed, along this curve, it is easily checked that the canonical representation of $(\alpha_1, \alpha_2)$ is $(x,y)$ where $x=\sqrt{\alpha_1-(1-\sqrt{\alpha_2})^2} = \sqrt{\alpha_2}$ and $y=1-\sqrt{\alpha_2}$, and satisfies $2x^2+(1-x-y)^2=2\alpha_2\geq 1$. For $(\alpha_1, \alpha_2)$ along this curve, our extremal $3$-colouring templates $\mathbf{H}$ and $\mathbf{F}$ both have $\vert C\vert =o(n)$ and (up to changing at most $o(n^2)$ edges into non-edges and vice versa in each of the colour classes) degenerate down to the same $3$-colouring template $\mathbf{G}$ on $A\sqcup B=[n]$ with $\vert A\vert =\lfloor\sqrt{\alpha_2}n\rfloor$, $\vert B\vert =n-\vert A\vert$ and colour classes $G_1= A^{(2)}\cup B^{(2)}$, $G_2=A^{(2)}$ and $G_3=[n]^{(2)}\setminus A^{(2)}$.
\end{remark}
\noindent As a consequence of Theorem~\ref{theorem: forcing densities}, we settle Conjecture~\ref{conjecture: product extremal}:
\begin{corollary}\label{cor: conjecture true}
	Conjecture~\ref{conjecture: product extremal} is true.
\end{corollary}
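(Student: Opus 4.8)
The plan is to derive Corollary~\ref{cor: conjecture true} from Theorem~\ref{theorem: forcing densities} in two steps: a ``converse'' step that bounds how unbalanced the densities of a rainbow-triangle-free template can be, and an elementary optimisation step showing that the product of the three densities is maximised, over all admissible triples, by the common degeneration of $\mathbf{H}(\lceil\upsilon n\rceil,n-\lceil\upsilon n\rceil,0)$ and the corresponding $\mathbf{F}$-template. Given a Gallai $3$-colouring template $\mathbf{G}$ on $n$ vertices, write $\beta_i:=e(G_i)/\binom{n}{2}$ and assume without loss of generality $\beta_1\geq\beta_2\geq\beta_3$. I would introduce the \emph{threshold function}
\[\phi(\alpha_1,\alpha_2):=\begin{cases}1-x^2 & \text{if }(\alpha_1,\alpha_2)\in\mathcal{R}'_1\text{ with canonical representation }(x,y),\\[2pt] 2-\alpha_1-2\sqrt{\alpha_2}+\alpha_2 & \text{if }(\alpha_1,\alpha_2)\in\mathcal{R}_2,\\[2pt] \alpha_2 & \text{otherwise,}\end{cases}\]
on $D:=\{(\alpha_1,\alpha_2):\ 0\leq\alpha_2\leq\alpha_1\leq1\}$. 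By the Remark following Theorem~\ref{theorem: forcing densities} the three branches agree on $\overline{\mathcal{R}'_1}\cap\overline{\mathcal{R}_2}$ and on the part of $\partial\mathcal{R}'_1$ where $\alpha_2=1-x^2$, so, using the continuity of the canonical representation (Proposition~\ref{prop: existence and uniqueness of the x,y}), $\phi$ and $(\alpha_1,\alpha_2)\mapsto\alpha_1\alpha_2\phi(\alpha_1,\alpha_2)$ are continuous on $D$. The corollary then rests on two claims:
\begin{enumerate}[(A)]
\item $\max_{D}\alpha_1\alpha_2\phi(\alpha_1,\alpha_2)=h(\upsilon)$, attained at $(\alpha_1,\alpha_2)=(\upsilon^2+(1-\upsilon)^2,\upsilon^2)$;
\item for every $\eta>0$ there are $C,n_0>0$ such that every Gallai $3$-colouring template on $n\geq n_0$ vertices has $\beta_3\leq\phi\bigl(\max(\beta_1-\eta,0),\max(\beta_2-\eta,0)\bigr)+\eta$.
\end{enumerate}
Granting these, and using continuity of $\alpha_1\alpha_2\phi$ together with $0\leq\phi\leq 1$, one gets $\beta_1\beta_2\beta_3\leq h(\upsilon)+O(\eta)$; hence $\prod_i e(G_i)\leq(h(\upsilon)+o(1))\binom{n}{2}^3$, and taking cube roots yields Conjecture~\ref{conjecture: product extremal}.

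For claim~(B), write $\alpha_i:=\max(\beta_i-\eta,0)$. If $(\alpha_1,\alpha_2)\notin\mathcal{R}'_1\cup\mathcal{R}_2$ then $\phi(\alpha_1,\alpha_2)=\alpha_2\geq\beta_2-\eta\geq\beta_3-\eta$ by the ordering, and the claim is immediate. If $(\alpha_1,\alpha_2)\in\mathcal{R}'_1$, then for $n$ large $e(G_i)\geq\alpha_i\binom{n}{2}+Cn$ for $i\in\{1,2\}$, so the contrapositive of part~(a)(ii) of Theorem~\ref{theorem: forcing densities} (applied at $(\alpha_1,\alpha_2)$) forces $e(G_3)<(1-x^2)\binom{n}{2}+Cn$, i.e.\ $\beta_3<\phi(\alpha_1,\alpha_2)+o(1)$; the case $(\alpha_1,\alpha_2)\in\mathcal{R}_2$ is identical via part~(b)(ii). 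The point of first fixing $\eta$, then applying Theorem~\ref{theorem: forcing densities} at the perturbed pair, and only afterwards letting $n\to\infty$ (and finally $\eta\to0$) is that it dispenses with any case analysis at the interfaces between the three regions.

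Claim~(A) is where the substance lies, and I would prove it region by region, making in each case the substitution that reduces it to a one-variable comparison with $h$. On $\mathcal{R}'_1$, with $(x,y)$ the canonical representation and $w:=1-x-y\geq0$, we have $\alpha_1=x^2+y^2$, $\alpha_2=x^2+w^2$, so $\alpha_1\alpha_2\phi=(x^2+y^2)(x^2+w^2)(1-x^2)$; for fixed $x$, with $y+w=1-x$ and $t:=yw\in[0,(1-x)^2/4]$, the factor $(x^2+y^2)(x^2+w^2)=x^4+x^2(1-x)^2-2x^2t+t^2$ is a quadratic in $t$ whose vertex $t=x^2$ satisfies $x^2\geq(1-x)^2/4$ (as $x\geq\tfrac12$), hence it is maximised at $t=0$, i.e.\ $w=0$, giving $(x^2+(1-x)^2)x^2(1-x^2)=h(x)\leq h(\upsilon)$. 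On $\mathcal{R}_2$, put $x:=\sqrt{\alpha_2}$ and $q:=1-\alpha_1$; then $\alpha_1\alpha_2\phi=x^2(1-q)\bigl((1-x)^2+q\bigr)$, and the defining inequalities of $\mathcal{R}_2$ (with $\alpha_1\leq1$) amount to $0\leq q\leq\min(2x-1,2x(1-x))$, a range on which this quadratic in $q$ is increasing, so the maximum is at the right endpoint and equals $2x^4(1-x)$ if $x\leq\tfrac{1}{\sqrt{2}}$ and $h(x)$ if $x\geq\tfrac1{\sqrt{2}}$; since $2x^4(1-x)\leq\tfrac12\bigl(1-\tfrac1{\sqrt{2}}\bigr)<h(\upsilon)$ and $h(x)\leq h(\upsilon)$, we are done. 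On the remainder of $D$, $\phi=\alpha_2$, and I would bound $\alpha_1\alpha_2^2$ crudely: in $\mathcal{R}_1\setminus\mathcal{R}'_1$ one has $\alpha_2<1-x^2$ while $\alpha_1\alpha_2\leq x^2(x^2+(1-x)^2)$ by the same quadratic-in-$t$ estimate, so $\alpha_1\alpha_2^2<h(x)\leq h(\upsilon)$; in the remaining strips — where $\alpha_1<\max(1-\alpha_2,\tfrac{1+\tau^2}{2})$, or $1-2\sqrt{\alpha_2}+2\alpha_2<\alpha_1<2-2\sqrt{\alpha_2}$ (which forces $\alpha_2<\tfrac12$) — one checks $\alpha_1\alpha_2^2\leq\max\bigl(\tfrac18,\bigl(\tfrac{1+\tau^2}{2}\bigr)^3,\tfrac12\bigl(1-\tfrac1{\sqrt{2}}\bigr)\bigr)<h(\upsilon)$. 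Finally, the maximiser $(\upsilon^2+(1-\upsilon)^2,\upsilon^2)$ lies in $\mathcal{R}'_1$ with canonical representation $(\upsilon,1-\upsilon)$ (it is also the common degeneration of $\mathbf{H}(\lceil\upsilon n\rceil,n-\lceil\upsilon n\rceil,0)$ and of $\mathbf{F}$ with the same parts), and there $\alpha_1\alpha_2\phi=(\upsilon^2+(1-\upsilon)^2)\upsilon^2(1-\upsilon^2)=h(\upsilon)$.

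The main obstacle is claim~(A): the objective is non-smooth — because of the square roots in the $\mathcal{R}_2$ branch and because the canonical representation is defined only implicitly, as a root of a quartic — so the bookkeeping over $\mathcal{R}'_1$, $\mathcal{R}_2$, $\mathcal{R}_1\setminus\mathcal{R}'_1$ and the subcritical strips must be organised carefully, although each region collapses, after the right substitution, to a routine one-variable comparison with $h(x)\leq h(\upsilon)$. A secondary technical point is the continuity of the canonical representation underpinning both the continuity of $\phi$ and the limiting argument in claim~(B), which we take from Proposition~\ref{prop: existence and uniqueness of the x,y}.
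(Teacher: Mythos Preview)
Your proof is correct and follows essentially the same route as the paper's: both reduce Conjecture~\ref{conjecture: product extremal} to Theorem~\ref{theorem: forcing densities}, then maximise the product $\alpha_1\alpha_2\alpha_3$ subject to the resulting upper bound on $\alpha_3$, showing in each region that the maximum collapses to $h(x)\leq h(\upsilon)$. Your packaging via the threshold function $\phi$ and the $\eta$-perturbation is more explicit than the paper's (which simply says ``it suffices to show this for $(\alpha_1,\alpha_2)\in\mathcal{R}'_1\cup\mathcal{R}_2$'' and optimises directly over the parameters of $\mathbf{F}$ and $\mathbf{H}$), and your substitutions --- the quadratic in $t=yw$ on $\mathcal{R}'_1$ and in $q=1-\alpha_1$ on $\mathcal{R}_2$ --- differ from but are equivalent to the paper's partial-derivative calculations.
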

\noindent 

\subsection{Further remarks and open problems}

\noindent \textbf{Minimum degree conditions:} in both of our  extremal colouring templates $\mathbf{F}$ and $\mathbf{H}$, there are colour classes with isolated vertices.  Indeed, we have $\delta(F_1) = \delta(F_2) = 0$ (by considering vertices in $C$ and $B$ respectively) and $\delta(G_2)=0$ (by considering vertices in $B\cup C$). Given this, it is natural to ask how Problem~\ref{problem: which densities are forcing} changes when we impose minim-degree rather than density conditions.

We study this question in a companion paper~\cite{FRMarkstromRaty22}, in which given $\delta(G_1)$ we determine the maximum possible value of $\delta(G_2)+\delta(G_3)$ in a Gallai colouring template $\mathbf{G}$. It turns out the extremal behaviour for this problem is starkly different from the one we established for Problem~\ref{problem: which densities are forcing} in this paper. Indeed, the maximum possible value of $\delta(G_2) + \delta(G_3)$ jumps from $\frac{2n}{r}$ to $\frac{2n}{r+1}$ when $\delta_1(G)$ increases from $n-\lceil\frac{n}{r}\rceil$ to $\lceil n-\frac{n}{r}\rceil+1$, in contrast to the more continuous behaviour seen in Theorem~\ref{theorem: forcing densities}.

\noindent \textbf{Other cliques:} in~\cite{AharoniDeVosdelaMazaMontejanoSamalin20}, Aharoni, DeVos, de la Maza, Montejano and \v{S}\'amal asked what happens when the triangle $K_3$ is replaced with a complete graph $K_r$ on $r$ vertices when $r \geq 4$. 
\begin{question}
	Let $r \geq 4$. What is the smallest real number $\delta_r$ so that for all $n$ sufficiently large, any $n$-vertex ${r \choose 2}$-colouring template $\mathbf{G}$ with $\min\left\{\vert E(G_1) \vert, \dots, \vert E(G_{{r \choose 2}})\vert  \right\} > \delta_r \frac{n^2}{2}$ must contain a rainbow copy of $K_{r}$?
\end{question}
\noindent By considering $G_1=G_2=\cdots = G_{\binom{r}{2}}=T_{r-1}(n)$, the $(r-1)$-partite Tur\'an graph, it is clear $\delta_r \geq  1 - \frac{1}{r-1}$. Is this bound tight for any $r$?

\noindent\textbf{Other graphs:} besides larger cliques, one can ask for conditions guaranteeing the existence of rainbow copies of some other graph $H$. Babi\'nski and Grzesik~\cite{BabinskiGrzesik22} recently considered this problem when $H = P_3$, the path on $4$ vertices with  $3$ edges. For every $r\geq 3$, they determined the value of the least $\alpha(r, P_3)\geq 0$ such that for all $\alpha>\alpha(r, P_3)$ and all $n$ sufficiently large, every $n$-vertex $r$-colouring template $\mathbf{G}$ with $\min\left(\vert E(G_1) \vert, \cdots, \vert E(G_r) \vert \right) \geq \alpha n^2$ must contain a rainbow $P_3$.

In a similar direction, Frankl, Gy\H{o}ri, He, Lv, Salia, Tompkins, Varga and Zhu~\cite{FranklGyoriHeLvSaliaTompkins22} successfully determined the (asymptotic behaviour of the)  maximum of the geometric mean of the colour classes in $r$-colouring templates with no rainbow copy of $H$ when $r\in\{3,4\}$ and $H=P_3$ and when $r=4$ and $H=P_4$, the path on five vertices. It would be interesting to obtain generalisation of both of these results for longer paths.

\noindent\textbf{Stability, colourings vs templates:} we expect that the proof of Theorem ~\ref{theorem: forcing densities} can be adapted to give stability versions of our results, but we had not explored this further due to the length of the paper. Finally, we focused in this work on colouring \emph{templates}, in which colour classes may overlap. Following Erd{\H o}s and Tuza~\cite{ErdosTuza93}, one could instead consider analogous problems for colourings of $K_n$ or of subgraphs of $K_n$. Can one obtain analogues of Theorem~\ref{theorem: forcing densities} in this setting?

\subsection{Notation}
As noted above, we write $[n]:=\{1,2,\ldots n\}$,  $S^{(2)}:=\{\{s,s'\}:\  s,s' \in S, s\neq s'\}$ and $(S,T)^{(2)}:=\{\{s,t\}: \ s\in S, t\in T\}$. Where convenient, we identify $G_i$ with its edge-set  $E(G_i)$.  We also write $xy$ for $\{x,y\}$. We use $G_i[X]$ and $G_i[X,Y]$ as a notation for the subgraph of $G_i$ induced by the vertex-set $X$ and for the bipartite subgraph of $G_i$ induced by the bipartition $X\sqcup Y$ respectively. Throughout the remainder of the paper, we shall use $\vert G_i\vert$, $\vert G_i[X]\vert$ and $\vert G_i[X,Y]\vert$  as shorthands for $\vert E(G_i)\vert$, $\vert E(G_i[X])\vert$ and $\vert E(G_i[X,Y])\vert$ respectively. We use Landau big O notation, and note that $g=O(f)$ or $g=o(f)$ is an assertion about the order of $g$ and not its sign (so we do not differentiate between $1-o(1)$ and $1+o(1)$, for example).

Given a $3$-colouring template $\mathbf{G}$ on a set $V$, we call a pair $xy\in V^{(2)}$ a \emph{rainbow edge} if $xy\in \bigcap_{i=1}^3G_i$. Further, we call a pair $xy$ which is contained in at least two of the colour classes $G_1, G_2, G_3$ a \emph{bi-chromatic edge}. The following notion of density for a colouring template will be a useful tool in our analysis:
\begin{definition}[Colour density vector]
	Given an $r$-colouring template $\mathbf{G}=(G_1, G_2, \ldots, G_r)$ on an $n$-set $V$, the colour density vector of $\mathbf{G}$ is 
	\[\mathbf{\rho}(\mathbf{G}):=\left(\frac{\vert G_1\vert}{\binom{n}{2}},\  \frac{\vert G_2\vert}{\binom{n}{2}},\  \ldots , \ \frac{\vert G_r\vert}{\binom{n}{2}}\right).\]

\end{definition}

\section{Critical colour densities for rainbow triangles}\label{section: edge density}
\subsection{Preliminary remarks}\label{subsection: preliminaries}
We begin by analysing the colour density vectors yielded by Constructions~\ref{construction: F(a,b,c)} and~\ref{construction: H(a,b,c)}.
\begin{proposition}\label{prop: construction analysis}
For $a=xn$, $b=yn$ and $c=zn$, the  colour density vectors of $\mathbf{F}$ and $\mathbf{H}$ are
\[ \left(x^2+y^2, \ x^2+z^2,\  1-x^2 \right)  + \left(O(n^{-1}),\ O(n^{-1}),\ O(n^{-1})\right)\]	
and
\[\left(1-2xy,\  x^2,\ (1-x)^2 +2xy\right)  + \left(O(n^{-1}),\ O(n^{-1}),\ O(n^{-1})\right) \]
respectively. In particular, for $z=0$ (and thus $x+y=1$) they coincide asymptotically and are both equal to
$ \left(x^2+(1-x)^2,\ x^2,\ 1-x^2\right)  + \left(O(n^{-1}),\ O(n^{-1}),\ O(n^{-1})\right)$.	
\end{proposition}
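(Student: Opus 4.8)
The plan is to carefully count the edges in each colour class of the two constructions, express the counts in terms of $n$ and the parameters $a,b,c$, and then substitute $a = xn$, $b = yn$, $c = zn$ while keeping track of the error terms. Throughout I will use the elementary facts that $\binom{m}{2} = \tfrac{m^2}{2} + O(m)$, that $|S^{(2)}| = \binom{|S|}{2}$, that $|(S,T)^{(2)}| = |S|\cdot|T|$ when $S$ and $T$ are disjoint, and that $\binom{n}{2} = \tfrac{n^2}{2}(1 + O(n^{-1}))$; dividing a count of the form $\tfrac{c\, n^2}{2} + O(n)$ by $\binom{n}{2}$ then yields $c + O(n^{-1})$, which accounts for the error vectors in the statement.

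First I would handle $\mathbf{F}(a,b,c)$. Since the colour classes are unions of cliques (and one co-clique), their edge counts are immediate: $|E(F_1)| = \binom{a}{2} + \binom{b}{2}$, $|E(F_2)| = \binom{a}{2} + \binom{c}{2}$, and $|E(F_3)| = \binom{n}{2} - \binom{a}{2}$. Substituting $a = xn$, $b = yn$, $c = zn$ gives $|E(F_1)| = \tfrac{x^2 n^2}{2} + \tfrac{y^2 n^2}{2} + O(n)$, and similarly for the others; dividing through by $\binom{n}{2}$ yields the claimed vector $(x^2 + y^2,\ x^2 + z^2,\ 1 - x^2) + (O(n^{-1}), O(n^{-1}), O(n^{-1}))$. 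For $\mathbf{H}(a,b,c)$ I would similarly compute: $H_2 = A^{(2)}$ gives $|E(H_2)| = \binom{a}{2} = \tfrac{x^2 n^2}{2} + O(n)$; for $H_1 = A^{(2)} \cup (B\cup C)^{(2)} \cup (A,C)^{(2)}$, note these three pieces are edge-disjoint, so $|E(H_1)| = \binom{a}{2} + \binom{b+c}{2} + ac = \tfrac{n^2}{2}\bigl(x^2 + (y+z)^2 + 2xz\bigr) + O(n)$. Here I would simplify $x^2 + (y+z)^2 + 2xz$; using $x + y + z = 1$ one checks $x^2 + (y+z)^2 + 2xz = x^2 + (1-x)^2 + 2xz = 1 - 2x + 2x^2 + 2xz = 1 - 2x(1 - x - z) = 1 - 2xy$, giving the first coordinate $1 - 2xy$. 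Finally $H_3 = (B\cup C)^{(2)} \cup (A,B)^{(2)}$ gives $|E(H_3)| = \binom{b+c}{2} + ab = \tfrac{n^2}{2}\bigl((y+z)^2 + 2xy\bigr) + O(n) = \tfrac{n^2}{2}\bigl((1-x)^2 + 2xy\bigr) + O(n)$, as claimed. (As a sanity check one sees that $|E(H_1)| + |E(H_3)| - |E(H_2)|$... actually a cleaner check: the three coordinates of $\mathbf{H}$ sum to $(1 - 2xy) + x^2 + ((1-x)^2 + 2xy) = 1 + x^2 + (1-x)^2$, matching the total pair count plus the overlap $A^{(2)}$ counted in both $H_1$ and $H_2$.)

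For the last sentence, I would set $z = 0$, so $x + y = 1$ and $y = 1 - x$. Substituting into the $\mathbf{F}$-vector gives $(x^2 + (1-x)^2,\ x^2 + 0,\ 1 - x^2)$; substituting $z = 0$ into the $\mathbf{H}$-vector gives $(1 - 2xy,\ x^2,\ (1-x)^2 + 2xy)$, and since $1 - 2xy = 1 - 2x(1-x) = x^2 + (1-x)^2$ and $(1-x)^2 + 2xy = (1-x)^2 + 2x(1-x) = 1 - x^2$, this equals $(x^2 + (1-x)^2,\ x^2,\ 1 - x^2)$ as well. Hence the two density vectors agree asymptotically, both being $\bigl(x^2 + (1-x)^2,\ x^2,\ 1 - x^2\bigr) + (O(n^{-1}), O(n^{-1}), O(n^{-1}))$.

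There is no real obstacle here — the proof is a routine edge-count together with the algebraic identity $x^2 + (y+z)^2 + 2xz = 1 - 2xy$ under $x+y+z = 1$. The only point requiring a little care is making sure the pieces of $H_1$ (and of $H_3$) are genuinely edge-disjoint so that the counts add without inclusion–exclusion corrections: $A^{(2)}$, $(B\cup C)^{(2)}$, and $(A,C)^{(2)}$ are pairwise disjoint because the first lies inside $A$, the second inside $B\cup C$, and the third has exactly one endpoint in $A$ and one in $C$; the analogous check for $H_3$ is immediate.
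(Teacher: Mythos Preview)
Your proof is correct and follows exactly the approach the paper has in mind; the paper's own proof consists of the single line ``Simple calculation'', and what you have written is precisely that calculation carried out in full detail.
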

\begin{proof}
	Simple calculation.
\end{proof}
\noindent Recall that  $\tau=\frac{4-\sqrt{7}}{9}$. The next two propositions establish that certain $(\alpha_1, \alpha_2, \alpha_3)$ are trivially not forcing triples and that for $(\alpha_1, \alpha_2)$ there exists a unique canonical representation $\alpha_1=x^2+y^2$, $\alpha_2=x^2+z^2$ with $x\geq 1/2$, $0\leq y \leq 1-x$ and $x+y+z=1$.
\begin{proposition}\label{prop: trivial conditions}
Let $(\alpha_1, \alpha_2, \alpha_3)$ be a triple of elements of $[0,1]$ with $\alpha_1\geq \alpha_2\geq \alpha_3$. 	If any of the following hold, then $(\alpha_1,\alpha_2, \alpha_3)$ is not a forcing triple:
	\begin{enumerate}[(a)]
		\item  $\alpha_1< \frac{1+\tau^2}{2}=\frac{52-4\sqrt{7}}{81}$;
\item $\alpha_2<\frac{1}{4}$;
\item $\alpha_1+\alpha_2<1$;
\item $\alpha_1=x^2+y^2$ and $\alpha_2=x^2+(1-x-y)^2$ for some non-negative reals $x,y$ with $x+y\leq 1$ and $2x^2+(1-x-y)^2<1$.
	\end{enumerate}
\end{proposition}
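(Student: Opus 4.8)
In each part the aim is to produce, for all large $n$, an $n$-vertex $3$-colouring template with no rainbow triangle whose colour classes have more than $\min\bigl(\tfrac{\alpha_i}{2}n^2,\binom n2-1\bigr)$ edges; this shows $(\alpha_1,\alpha_2,\alpha_3)$ is not forcing. Two reductions are used throughout: for $\alpha_i<1$ this threshold equals $\tfrac{\alpha_i}{2}n^2$ once $n$ is large (and $\alpha_i=1$ occurs only in part~(b), where it is met by a complete class); and relabelling the three colour classes of a template does not change whether it has a rainbow triangle, so it is enough to build a rainbow-$K_3$-free template whose classes, in decreasing order of size, dominate $\bigl(\tfrac{\alpha_1}{2}n^2,\tfrac{\alpha_2}{2}n^2,\tfrac{\alpha_3}{2}n^2\bigr)$ coordinatewise; equivalently, for each $i$ at least $i$ classes exceed $\tfrac{\alpha_i}{2}n^2$. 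Parts (a) and (b) are essentially free. If $\alpha_1<\tfrac{1+\tau^2}{2}$, all three densities are below this bound, and the template $\mathbf{F}\bigl(n-2\lceil\tau n\rceil,\lceil\tau n\rceil,\lceil\tau n\rceil\bigr)$ witnessing the sharpness of Theorem~\ref{theorem: min colour density forcing rainbow K3} is rainbow-$K_3$-free with all classes of size $\tfrac{1+\tau^2}{4}n^2+O(n)$, beating every $\tfrac{\alpha_i}{2}n^2$. If $\alpha_2<\tfrac14$ (so also $\alpha_3<\tfrac14$), take $G_1=K_n$ and, for a balanced partition $[n]=A\sqcup B$, $G_2=A^{(2)}$, $G_3=B^{(2)}$: no vertex meets both a $G_2$-edge and a $G_3$-edge, so there is no rainbow triangle, and $e(G_2)=e(G_3)=\tfrac{n^2}{8}-O(n)$ while $e(G_1)=\binom n2$.

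For part (c), $\alpha_1+\alpha_2<1$ gives $\alpha_1+\alpha_3<1$ and $\alpha_2<\tfrac12$. I would fix $x\in(0,1)$ with $\sqrt{\alpha_3}<x<\sqrt{1-\alpha_1}$ (non-empty since $\alpha_1+\alpha_3<1$) and take the rainbow-$K_3$-free template $\mathbf{F}(\lfloor xn\rfloor,n-\lfloor xn\rfloor,0)$, whose classes have densities tending to $x^2+(1-x)^2$, $x^2$ and $1-x^2$ by Proposition~\ref{prop: construction analysis}. Since $1-x^2>\alpha_1$, $x^2+(1-x)^2\ge\tfrac12>\alpha_2$ and $x^2>\alpha_3$, for large $n$ these three classes exceed $\tfrac{\alpha_1}{2}n^2$, $\tfrac{\alpha_2}{2}n^2$ and $\tfrac{\alpha_3}{2}n^2$ respectively; with $\alpha_1\ge\alpha_2\ge\alpha_3$ this gives the required matching.

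Part (d) carries the weight. Put $z:=1-x-y$, so the hypothesis is $\alpha_1=x^2+y^2$, $\alpha_2=x^2+z^2$, $2x^2+z^2<1$, and note $y\ge z$ (from $\alpha_1\ge\alpha_2$) and $x^2+\alpha_3\le x^2+\alpha_2<1$. The obvious candidate $\mathbf{F}(xn,yn,zn)$ has density vector $(\alpha_1,\alpha_2,1-x^2)$, so it clears $\alpha_3$ in the third coordinate by $\Omega(n^2)$ but merely matches $\alpha_1,\alpha_2$ asymptotically --- in fact it falls $\Theta(n)$ edges short of $\tfrac{\alpha_1}{2}n^2$ and $\tfrac{\alpha_2}{2}n^2$ --- so the plan is to perturb it to gain a little slack in the first two coordinates while preserving the third. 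One may assume parts (a) and (b) fail, that is, $\alpha_1\ge\tfrac{1+\tau^2}{2}$ and $\alpha_2\ge\tfrac14$; a short case check then forces $x>0$, and forces $x>\tfrac12$ whenever $z=0$ (indeed $z=0$ makes $\alpha_2=x^2\ge\tfrac14$ so $x\ge\tfrac12$, while $x=\tfrac12$ would give $\alpha_1=\tfrac12$; and $x=0$ makes $\alpha_2=z^2\le\tfrac14$ since $z\le\tfrac12$, with equality only when $\alpha_1=\tfrac14$ --- both excluded). If $z=0$, I would take $\mathbf{F}(a,n-a,0)$ with $a=\lceil xn\rceil+k$ for a suitably large fixed constant $k$: expanding $\binom a2+\binom{n-a}2$ and using $x>\tfrac12$ and the size of $k$ makes $e(F_1)$ exceed $\tfrac{\alpha_1}{2}n^2$, one has $e(F_2)=\binom a2>\tfrac{\alpha_2}{2}n^2$, and $e(F_3)=\binom n2-\binom a2$ keeps $\Omega(n^2)$ slack since $1-x^2>x^2\ge\alpha_3$.

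If instead $z>0$, then $0<x<\sqrt{\alpha_2}$, so $x$ is interior to the domain of
\[
g(t):=\sqrt{\alpha_1-t^2}+\sqrt{\alpha_2-t^2}-(1-t),\qquad 0\le t\le\sqrt{\alpha_2},
\]
which a short computation shows is strictly concave, and which satisfies $g(x)=y+z-(1-x)=0$. Strict concavity at an interior zero provides some $t\ne x$, arbitrarily close to $x$, with $g(t)<0$; keeping $t$ close enough also keeps $t^2<1-\alpha_3$. I would then take $a=\lfloor tn\rfloor$ and choose $b,c$ just above $\sqrt{\alpha_1n^2-a^2}$ and $\sqrt{\alpha_2n^2-a^2}$ (with a small extra term to force later inequalities to be strict); subadditivity of the square root together with $g(t)<0$ ensures $a+b+c\le n$ for $n$ large, and any leftover vertices are put into $B$. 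The resulting template $\mathbf{F}(a,b',c)$, $b'\ge b$, has no rainbow triangle, and using $\binom m2\ge\tfrac{m^2-m}{2}$ one checks $e(F_1)>\tfrac{\alpha_1}{2}n^2$ and $e(F_2)>\tfrac{\alpha_2}{2}n^2$, while $a^2\le t^2n^2<(1-\alpha_3)n^2-n$ yields $e(F_3)>\tfrac{\alpha_3}{2}n^2$. The main obstacle is clearly part (d): since the extremal template only matches the first two target densities in the limit (and actually undershoots them), a perturbation is unavoidable, and the crux is to exhibit a feasible one --- exactly what the concavity of $g$, equivalently the sign of $g'(x)=1-\tfrac{x(1-x)}{yz}$, delivers --- together with the separate handling of the degenerate cases $z=0$ and $x=0$ via parts (a) and (b); by contrast the integrality and error-term bookkeeping (ceilings, leftover vertices, lower-order terms) is routine.
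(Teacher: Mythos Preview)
Your proof is correct. Parts (a)--(c) are essentially the same as the paper's (you use a different but equally valid template in (b), and in (c) you parametrise by $x\in(\sqrt{\alpha_3},\sqrt{1-\alpha_1})$ where the paper takes $x$ near $\sqrt{\alpha_2}$, but the mechanism is identical).

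For part (d) the two arguments diverge. The paper first invokes case (c) to assume $\alpha_1+\alpha_2\ge 1$; since $\alpha_1+\alpha_2\le 2x^2+(1-x)^2$, this forces $x\ge\tfrac23$ and hence $x>y$. It then applies the single perturbation $(x,y,z)\mapsto(x+\varepsilon,\,y-\varepsilon,\,z)$: because $x>y$, both $x^2+y^2$ and $x^2+z^2$ strictly increase, while $1-(x+\varepsilon)^2$ still exceeds $\alpha_2\ge\alpha_3$ for small $\varepsilon$ since $2x^2+z^2<1$. This handles $z=0$ and $z>0$ uniformly in one line. Your route instead reduces via (a) and (b), then splits into $z=0$ (a direct shift of $a$) and $z>0$ (the concavity argument for $g(t)=\sqrt{\alpha_1-t^2}+\sqrt{\alpha_2-t^2}-(1-t)$). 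Your approach is a bit more involved --- the case split and the concavity lemma replace the paper's single observation $x>y$ --- but it has the merit of not needing to establish $x>y$ at all: strict concavity guarantees a feasible perturbation direction regardless of the sign of $g'(x)=1-\tfrac{x}{y}-\tfrac{x}{z}$. One small point worth making explicit: when you pick $t$ close to $x$ in the $z>0$ case, you should also record that $t$ stays in $(0,\sqrt{\alpha_2})$ so the square roots remain real; this is automatic from continuity but deserves a word.
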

\begin{proof}
For each of the four cases (a)--(d), we construct a suitable $n$-vertex Gallai $3$-colouring template based on $\mathbf{F}=\mathbf{F}(a,b,c)$ whose colour density vector is coordinate-wise asymptotically strictly greater than $(\alpha_1, \alpha_2, \alpha_3)$ (possibly after rearranging the order of the colours). 
Since $\mathbf{F}$ is rainbow $K_3$-free, 
this suffices to show that $(\alpha_1, \alpha_2, \alpha_3)$ is not a forcing triple.

\noindent \textbf{Case (a): $\alpha_1< \frac{1+\tau^2}{2}$.} Set $a=n-2\lceil\tau n\rceil$, $b=c=\lceil \tau n\rceil$. Then $\mathbf{F}(a, b ,c)$ has asymptotic colour density vector $\left(\frac{1+\tau^2}{2}\right)\cdot(1,1,1)$. For $\varepsilon >0$ chosen sufficient small, this is pointwise strictly greater than $(\alpha_1, \alpha_2, \alpha_3)+\varepsilon\cdot (1,1,1)$. Thus $(\alpha_1,\alpha_2, \alpha_3)$ is not a forcing triple.

\noindent \textbf{Case (b): $\alpha_2<\frac{1}{4}$.} Set $a=0$, $b=\lceil \frac{n}{2}\rceil$, $c=n-b$. Then $\mathbf{F}(a, b, c)$ has asymptotic colour density vector $(\frac{1}{4},\frac{1}{4},1)$. For $\varepsilon >0$ chosen sufficiently small, this is pointwise strictly greater than $(\alpha_2, \alpha_3, \alpha_1)+\varepsilon\cdot(1,1,0)$ (since $\alpha_3\leq\alpha_2$). Rearranging colours, it immediately follows that $(\alpha_1,\alpha_2, \alpha_3)$ is not a forcing triple.

\noindent \textbf{Case (c): $\alpha_1+\alpha_2<1$.}  Pick $\varepsilon>0$ sufficiently small so that $1-\alpha_2-4\varepsilon>\alpha_1$. Set $a=\lceil n\sqrt{\alpha_2 +2\varepsilon}\rceil$, $b=n-a$, $c=0$. Then $\mathbf{F}(a, b, c)$ has asymptotic colour density vector $(\alpha_2+2\varepsilon+(1-\sqrt{\alpha_2+2\varepsilon})^2,  \alpha_2+2\varepsilon, 1-\alpha_2-2\varepsilon)$, which is strictly greater than $(\alpha_2, \alpha_3, \alpha_1)+\varepsilon\cdot (1,1,1)$. Rearranging colours, it immediately follows that $(\alpha_1,\alpha_2, \alpha_3)$ is not a forcing triple.

\noindent \textbf{Case (d)}: $\alpha_1=x^2+y^2$, $\alpha_2=x^2+(1-x-y)^2$  and $\alpha_2+x^2<1$. 
Observe that $2x^2<1$, whence $x<1/\sqrt{2}$.  Since $\alpha_1\geq \alpha_2$, this implies $y\geq (1-x)/2>0$. Further, by Case (c) above, we may assume $1\leq \alpha_1+\alpha_2$; since $\alpha_1+\alpha_2 \leq 2x^2+(1-x)^2$, we deduce from this that $x\geq 2/3$ and in particular $x>y$.

Pick $\varepsilon$: $0<\varepsilon<y$ sufficiently small so that $\alpha_2+\varepsilon^2<1-(x+\varepsilon)^2$. Then for $a=\lfloor (x+\varepsilon)n \rfloor$, $b=\lfloor (y-\varepsilon)n\rfloor$ and $c= n-a-b$, the $3$-colouring template $\mathbf{F}(a,b,c)$ contains no rainbow triangles and has asymptotic colour density vector $((x+\varepsilon)^2+ (y-\varepsilon)^2, (x+\varepsilon)^2+(1-x-y)^2, 1- (x+\varepsilon)^2)$, which is pointwise strictly greater than $(\alpha_1, \alpha_2, \alpha_2)+ \varepsilon^2 \cdot (2,1,1)$ (here in the first coordinate we used the fact that $x> y$). Since $\alpha_2\geq \alpha_3$, it immediately follows that $(\alpha_1,\alpha_2, \alpha_3)$ is not a forcing triple.
\end{proof}

\begin{proposition}\label{prop: existence and uniqueness of the x,y}
	Given non-negative real numbers $\alpha_1, \alpha_2$	satisfying $\alpha_1\geq \frac{1}{2}$ and
	 $\frac{\alpha_1+\sqrt{2\alpha_1-1}}{2}\leq \alpha_2\leq \alpha_1$, there exist a unique triple $(x,y, z)\in [0,1]^3$ with $x+y+z=1$ and $x\geq \frac{1}{2}$ such that
	\begin{align}\label{eq: unique expression for alpha1, alpha2}
	\alpha_1= x^2+y^2 && \alpha_2 = x^2+z^2. 
	\end{align}
\end{proposition}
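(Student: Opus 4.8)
The plan is to reduce the system \eqref{eq: unique expression for alpha1, alpha2} to a single-variable equation in $x$ and analyse it via the intermediate value theorem together with a monotonicity argument. Given the constraint $x+y+z=1$ with $x,y,z\geq 0$, write $z = 1-x-y$, so that we need $0\leq y\leq 1-x$ and $x\leq 1$. Subtracting the two equations in \eqref{eq: unique expression for alpha1, alpha2} gives $\alpha_1 - \alpha_2 = y^2 - (1-x-y)^2 = (1-x)(2y-(1-x)) = (1-x)(x+2y-1)$, while the first equation forces $y = \sqrt{\alpha_1 - x^2}$ (note $y\geq 0$ and $x^2\leq\alpha_1$ is needed, which I will have to verify holds on the relevant range). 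Substituting, I would define, for $x\in[\tfrac12,\sqrt{\alpha_1}]$,
\[
\Phi(x) := (1-x)\bigl(x + 2\sqrt{\alpha_1 - x^2} - 1\bigr) - (\alpha_1 - \alpha_2),
\]
and the goal becomes: there is a unique $x_\star\in[\tfrac12,\sqrt{\alpha_1}]$ with $\Phi(x_\star)=0$, and moreover the associated $y_\star = \sqrt{\alpha_1 - x_\star^2}$, $z_\star = 1-x_\star-y_\star$ lie in $[0,1]$ (equivalently $x_\star + y_\star\leq 1$), which would complete the existence-and-uniqueness claim.

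For the \emph{existence} part I would evaluate $\Phi$ at the two endpoints. At $x=\tfrac12$: here $y = \sqrt{\alpha_1 - \tfrac14}$ and $\Phi(\tfrac12) = \tfrac12\bigl(2\sqrt{\alpha_1-\tfrac14} - \tfrac12\bigr) - \alpha_1 + \alpha_2$; the hypothesis $\alpha_2 \geq \tfrac{\alpha_1 + \sqrt{2\alpha_1-1}}{2}$ should be exactly what is needed to guarantee $\Phi(\tfrac12)\geq 0$ (one checks $\sqrt{\alpha_1 - \tfrac14} = \tfrac12\sqrt{4\alpha_1 - 1}$ and matches terms — this is the place where the precise form of the lower bound on $\alpha_2$ gets used). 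At $x=\sqrt{\alpha_1}$: here $y=0$, so $\Phi(\sqrt{\alpha_1}) = (1-\sqrt{\alpha_1})(\sqrt{\alpha_1}-1) - (\alpha_1-\alpha_2) = -(1-\sqrt{\alpha_1})^2 - \alpha_1 + \alpha_2 \leq 0$, using $\alpha_2\leq\alpha_1$. By continuity of $\Phi$ on $[\tfrac12,\sqrt{\alpha_1}]$ and the intermediate value theorem, a root $x_\star$ exists. For \emph{uniqueness}, I would show the function $g(x) := (1-x)(x + 2\sqrt{\alpha_1 - x^2} - 1)$ is strictly monotone on the relevant interval — most cleanly by differentiating and checking the sign of $g'$, or alternatively by arguing that $x\mapsto x^2+y^2$ and $x\mapsto x^2+z^2$ move in opposite "directions" as $x$ increases along the constraint curve with $y$ determined. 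Once monotonicity is in hand, the root is unique. Finally I would verify the feasibility constraints: $x_\star\leq\sqrt{\alpha_1}$ is immediate from the domain, $x_\star\geq\tfrac12$ likewise, and $x_\star + y_\star\leq 1$ should follow because at the root $\alpha_1 - \alpha_2 = (1-x_\star)(x_\star + 2y_\star - 1)\geq 0$ forces $x_\star + 2y_\star \geq 1$... wait, that gives the wrong direction, so instead I would use $\alpha_2 \geq \tfrac14$ (a consequence of the hypotheses, as remarked in the paper after the definition of $\mathcal R_2$) together with $\alpha_2 = x_\star^2 + z_\star^2$ and bound $z_\star\geq 0$; concretely, $x_\star+y_\star\leq 1$ is equivalent to $z_\star\geq 0$, i.e. $\sqrt{\alpha_1-x_\star^2}\leq 1-x_\star$, i.e. $\alpha_1 - x_\star^2\leq (1-x_\star)^2$, i.e. $\alpha_1 \leq 1 - 2x_\star + 2x_\star^2$; I would derive this from the root equation and the hypothesis $\alpha_2\leq\alpha_1$.

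The main obstacle I anticipate is the monotonicity/uniqueness step: showing $g'(x)$ has constant sign on $[\tfrac12,\sqrt{\alpha_1}]$ requires care because $g$ is a product of a decreasing linear factor and a factor $x + 2\sqrt{\alpha_1-x^2}-1$ that first increases then decreases, so the derivative is not obviously single-signed and one may need to restrict attention to the subinterval $x\geq\tfrac12$ where the hypothesis $\alpha_2\geq\tfrac{\alpha_1+\sqrt{2\alpha_1-1}}{2}$ kicks in, or to rephrase uniqueness geometrically (the curve $\{(x^2+y^2,\,x^2+z^2) : x+y+z=1,\ y,z\geq 0\}$ in the $(\alpha_1,\alpha_2)$-plane, parametrised suitably, meets each admissible line $\alpha_1 = \text{const}$ in at most one point with $x\geq\tfrac12$). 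A secondary nuisance is bookkeeping the domain conditions ($x^2\leq\alpha_1$, $y\geq0$, $z\geq0$, $x\geq\tfrac12$) and confirming they are mutually compatible throughout, but these should be routine once the key equation $\Phi(x)=0$ is isolated.
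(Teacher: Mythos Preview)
Your reduction to a single variable via $y=\sqrt{\alpha_1-x^2}$, $z=1-x-y$, and the intermediate value theorem is exactly the paper's approach. The two gaps you flag --- verifying $z_\star\geq 0$ and the monotonicity of $g$ --- are precisely where the paper makes sharper choices that remove the obstacles.

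For feasibility: your interval $[\tfrac12,\sqrt{\alpha_1}]$ contains infeasible points (e.g.\ at $x=\tfrac12$ one has $y(\tfrac12)=\sqrt{\alpha_1-\tfrac14}>\tfrac12$ whenever $\alpha_1>\tfrac12$, so $z(\tfrac12)<0$), which is what leaves you stuck on the $z_\star\geq 0$ verification. The paper instead works on $[x_0,x_1]$ with $x_0=\tfrac{1+\sqrt{2\alpha_1-1}}{2}$ (the root of $z(x)=0$) and $x_1=\tfrac{1+2\sqrt{5\alpha_1-1}}{5}$ (the root of $y(x)=z(x)$); on this interval $y(x)\geq z(x)\geq 0$ automatically. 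The hypothesis $\alpha_2\geq\tfrac{\alpha_1+\sqrt{2\alpha_1-1}}{2}$ is then exactly $\alpha_2\geq x_0^2=x_0^2+z(x_0)^2$, while $x_1^2+z(x_1)^2=x_1^2+y(x_1)^2=\alpha_1\geq\alpha_2$, so IVT applied to $x\mapsto x^2+z(x)^2$ on $[x_0,x_1]$ gives existence with feasibility built in. For uniqueness, the paper avoids any derivative computation: if $(x_\star+d_x,\ y_\star-d_y,\ z_\star-d_z)$ is a second solution with $d_x\geq 0$, the two equations force $d_y,d_z\geq 0$ and the sum constraint gives $d_x=d_y+d_z$. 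Since both solutions lie in $[0,1]^3$ with $x\geq\tfrac12$, one has $y\leq 1-x\leq\tfrac12$, and the elementary bound $(x_\star+d_x)^2+(y_\star-d_y)^2\geq x_\star^2+y_\star^2+(d_x-d_y)$ (each term $(2x_\star-1)d_x$, $(1-2y_\star)d_y$, $d_x^2$, $d_y^2$ is non-negative) together with equality to $\alpha_1$ forces $d_y\geq d_x$, hence $d_z=0$, and then the second equation gives $d_x=0$.
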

\begin{proof}	
Set $y(x):=\sqrt{\alpha_1-x^2}$	and $z(x):=1-x-y(x)$.  Our goal is to show there exists a unique solution $x_{\star}$ to $x^2+(z(x))^2=\alpha_2$ with $x\geq\frac{1}{2}$, $y(x)$ real and $z(x)\geq 0$.

Solving the appropriate quadratic equations, it is easily checked that for $x\in [\frac{1}{2},1]$ we have $z(x)\geq 0$ for $x\geq x_0= \frac{1+\sqrt{2\alpha_1-1}}{2}$ and $y(x)\geq z(x)$ for $x\leq x_1=\frac{1+2\sqrt{5\alpha_1-1}}{5}$. It is clear geometrically that $x_0\leq x_1$ (these values of $x$ corresponding as they do to intersections of the circle $x^2+y^2=\alpha_1$ with the lines $y=1-x$ and $y=(1-x)/2$ in the first quadrant of the plane). Further, solving another two quadratic equations, it is easily checked that  $x_1\leq \sqrt{\alpha_1}$ with equality if and only if $\alpha_1=1$, so that $y(x)$ is real in the interval $[x_0, x_1]$.

Now, $(x_0)^2+(z(x_0))^2=(x_0)^2=\frac{\alpha_1+\sqrt{2\alpha_1-1}}{2}\leq \alpha_2$ and $(x_1)^2+(z(x_1))^2=(x_1)^2+(y(x_1))^2=\alpha_1\geq \alpha_2$. The existence of an $x_{\star}\in [x_0, x_1]$ such that $(x_{\star})^2 +(z(x_{\star}))^2=\alpha_2$ thus follows from the intermediate value theorem.

It remains to show the uniqueness of this solution. Suppose there exists $x=x_{\star}+d_x$ for some $d_x\geq 0$ and $y, z$ with $x+y+z=1$ such that $(x,y,z)$ satisfies~\eqref{eq: unique expression for alpha1, alpha2}. Clearly we must have $y = y(x_\star)-d_y$ and $z = z(x_{\star})-d_z$ for some non-negative $d_y, d_z$ with $d_x=d_y+d_z$ (otherwise one of the equations in~\eqref{eq: unique expression for alpha1, alpha2} or the condition $x+y+z=1$ must fail). Since $x\geq \frac{1}{2}$ we have $y\leq \frac{1}{2}$ and $(x_{\star}+d_x)^2+(y(x_{\star})-d_y)^2\geq (x_{\star})^2+(y(x_{\star}))^2 +(d_x-d_y)$. In particular, $d_y\geq d_x$. Then $d_x=d_y+d_z$ implies $d_z=0$, which in turn implies $d_x=0$ (else $x^2+z^2>(x_{\star})^2 +(z(x_{\star}))^2$) and hence $d_y=0$, and the uniqueness of our triple $(x_{\star},y(x_{\star}),z(x_{\star}))$.
\end{proof}
\begin{definition}[Good pair]
	We say that a pair of non-negative real numbers $(\alpha_1,\alpha_2)$ from $[0,1]^2$ is a \emph{good pair} if 
	\begin{align*}
	\max\left\{\frac{1}{4}, \frac{\alpha_1+\sqrt{2\alpha_1-1}}{2}\right\} \leq \alpha_2 &&  \max\left\{\alpha_2, 1-\alpha_2, \frac{1+\tau^2}{2}  \right\} \leq \alpha_1,
	\end{align*}
	and in addition the unique $(x,y,z)\in [0,1]^3$ with $x+y+z=1$ and $x\geq \frac{1}{2}$ such that~\eqref{eq: unique expression for alpha1, alpha2} holds satisfies $2x^2+z^2\geq 1$. Given a good pair, we refer to this unique $(x,y,z)$ (whose existence is guaranteed by Proposition~\ref{prop: existence and uniqueness of the x,y}) as the \emph{canonical representation} of $(\alpha_1, \alpha_2)$. 
\end{definition}

\subsection{Proof strategy}\label{subsection: proof strategy}
We divide the proof of Theorem~\ref{theorem: forcing densities} into two parts, depending on whether or not the edge densities $\alpha_1$ and $\alpha_2$ of the two largest colour classes satisfy $\alpha_1 \leq \alpha_2+ (1-\sqrt{\alpha_2})^2$. 
 In both cases, we prove a technical statement of the form `if the colour classes of a colouring template satisfy certain inequalities, then it must contain a rainbow triangle'. To do so, we consider a putative minimal counterexample $\mathbf{G}$ to our technical statement, and use its minimality to rule out the existence of rainbow edges. 

We then consider a largest matching $M$ of bi-chromatic  edges in $\mathbf{G}$, which we use to obtain a partition of $V=V(\mathbf{G})$ into sets $V_{ij}$ of vertices meeting a bi-chromatic edge of $M$ in colours $ij$ and a left-over set $D$. We perform a series of modification of $\mathbf{G}$ to obtain a new colouring template $\mathbf{G''}$ such that the sizes of the colour classes of $\mathbf{G''}$ satisfy the same inequalities as those of $\mathbf{G}$ up to some small $O(n)$ error terms. The crux is, however, that $\mathbf{G''}$ is very well-structured with respect to the partition obtained in the previous step, so that we have a good control over the sizes of its colour classes. In the final step of the argument, we use this information to derive a contradiction from our family of inequalities.

The idea of considering a largest matching of bi-chromatic edges and modifying $\mathbf{G}$ based on the resulting partition appeared previously in the work of Aharoni, DeVos, de la Maza, Montejanos and \v{S}\'amal~\cite{AharoniDeVosdelaMazaMontejanoSamalin20}, more specifically their key Lemma 2.3 which inspired our approach in the case $\alpha_1 \leq \alpha_2+ (1-\sqrt{\alpha_2})^2$.

 An important additional ingredient in our proof in the case $\alpha_1 > \alpha_2+ (1-\sqrt{\alpha_2})^2$ is the idea of looking a vertex-minimal counterexample $\mathbf{G}$ which also maximises the size of the largest colour class $G_1$. Indeed, this allows us to `push' $\mathbf{G}$ towards a much more amenable bipartite extremal structure, which we are able to analyse.
\subsection{The $\mathbf{F}$-extremal region: the case $\alpha_1\leq 
\alpha_2 +(1-\sqrt{\alpha_2})^2$}\label{section: easy case}
Note that for $\alpha_1\geq 1/2$ and $\alpha_2\geq 1/4$, the inequality for $\alpha_1, \alpha_2$ we have in this case is equivalent to the lower bound for $\alpha_2$ we had in our definition of a good pair in Section~\ref{subsection: preliminaries}:
\begin{align}\label{eq: easy case condition}\alpha_2\leq \alpha_1\leq \alpha_2 +(1-\sqrt{\alpha_2})^2 && \Leftrightarrow && \frac{\alpha_1+\sqrt{2\alpha_1-1}}{2}\leq \alpha_2\leq \alpha_1.\end{align}
\begin{theorem}\label{theorem: technical version, easy case}
Let $(\alpha_1, \alpha_2)$ be a good pair and let $(x,y,z)$ be its associated canonical representation. Set $\alpha_3:= 1-x^2$. If $\mathbf{G}$ is a $3$-colouring template on $n$ vertices satisfying
\begin{align}\label{eq: bound on Gi Gj}
\vert E(G_i)\vert +\vert E(G_j)\vert \geq \left(\alpha_i+\alpha_j\right)\binom{n}{2}+5n
\end{align}
for all distinct $i, j \in [3]$,  then $\mathbf{G}$ contains a rainbow triangle.
\end{theorem}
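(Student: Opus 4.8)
The plan is to follow the proof outline sketched in Section~\ref{subsection: proof strategy}, adapted to the ``$\mathbf{F}$-extremal'' regime. First I would suppose for contradiction that a counterexample exists, and pick $\mathbf{G}$ to be a \emph{vertex-minimal} counterexample, i.e.\ one on the smallest possible number of vertices $n$. I would first observe that $\mathbf{G}$ can contain no rainbow edge (a pair in $G_1\cap G_2\cap G_3$): if $xy$ were such a pair, deleting the vertex $x$ would lose at most $n-1$ edges in each colour class, so the pairwise sums in~\eqref{eq: bound on Gi Gj} drop by at most $2(n-1)$; but this still exceeds $(\alpha_i+\alpha_j)\binom{n-1}{2}+5(n-1)$ for the relevant constant, so $\mathbf{G}-x$ would be a smaller counterexample, contradiction. (The slack $5n$ rather than $O(n)$ is chosen precisely so this induction closes cleanly; one checks $(\alpha_i+\alpha_j)\binom{n}{2}+5n - 2(n-1) \geq (\alpha_i+\alpha_j)\binom{n-1}{2} + 5(n-1)$ using $\alpha_i+\alpha_j\leq 2$.) Hence no rainbow edge — but there must be plenty of bi-chromatic edges, since the colour classes are dense and, by Mantel applied to each pair $G_i\cap G_j$ being triangle-free-ish, the union cannot be too small.

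Next I would take a maximum matching $M$ of bi-chromatic edges in $\mathbf{G}$. Each matching edge carries one of the three colour-pair types $\{1,2\},\{1,3\},\{2,3\}$; this partitions $V(M)$ into sets $V_{12},V_{13},V_{23}$ (vertices of $M$-edges of the respective types, two per edge), and lets $D := V\setminus V(M)$. Because $M$ is maximum, $D$ spans no bi-chromatic edge, and moreover — this is the Gallai/rainbow-triangle constraint, essentially Lemma~2.3 of~\cite{AharoniDeVosdelaMazaMontejanoSamalin20} — the interaction between the $V_{ij}$'s and $D$ is tightly controlled: e.g.\ if $uv\in M$ has type $\{1,2\}$ and $w\in D$, then among the (at most two) colours of $uw$ and the (at most two) colours of $vw$ we cannot complete a rainbow triangle through $uv$, which forces the pair $uv$ together with $w$ into a restricted pattern. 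Absence of rainbow triangles between two matching edges similarly restricts how $V_{12},V_{13},V_{23}$ attach to each other. I would then perform a sequence of \emph{cleaning/symmetrisation} moves: delete the $O(n)$ edges incident to $D$ that are ``irregular'', replace each colour class restricted to $V(M)$ by the extremal pattern dictated by these local constraints, and absorb $D$ into whichever of $A:=V_{12}$-type part, etc., is locally forced — ending with a template $\mathbf{G}''$ on the same $n$ vertices, still rainbow-triangle-free, with $|E(G''_i)| \geq |E(G_i)| - O(n)$ for each $i$, and with the global structure of an $\mathbf{F}(a,b,c)$-template (up to $O(n)$ edits) for some partition sizes $a\geq b\geq c$ with $a+b+c=n$.

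Finally I would run the counting step: by Construction~\ref{construction: F(a,b,c)} and Proposition~\ref{prop: construction analysis}, $\mathbf{G}''$ being essentially $\mathbf{F}(a,b,c)$ with $a=xn$, $b=yn$, $c=zn$ forces its colour density vector to be within $O(1/n)$ of $(x^2+y^2,\ x^2+z^2,\ 1-x^2)$ (after reindexing colours). Combining with $|E(G''_i)|\geq |E(G_i)| - O(n) \geq \alpha_i\binom{n}{2} + 5n - O(n)$ and summing over pairs, we would get
\[
(x^2+y^2) + (x^2+z^2) \geq \alpha_1+\alpha_2,\quad (x^2+y^2)+(1-x^2)\geq \alpha_1+\alpha_3,\quad (x^2+z^2)+(1-x^2)\geq \alpha_2+\alpha_3,
\]
for some feasible $(x,y,z)$ (with $x+y+z=1$, $x\geq\tfrac12$), whereas the \emph{canonical representation} of the good pair $(\alpha_1,\alpha_2)$ is, by Proposition~\ref{prop: existence and uniqueness of the x,y}, the \emph{unique} such triple realising $\alpha_1=x^2+y^2$, $\alpha_2=x^2+z^2$; one then checks that the extra slack $+5n$ in~\eqref{eq: bound on Gi Gj} is incompatible with the equality case, yielding the contradiction. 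The main obstacle, I expect, is the middle step: precisely classifying, via the maximum-matching / no-rainbow-triangle local constraints, \emph{which} extremal pattern each block $V_{ij}$ and the leftover $D$ must follow, and verifying that the cleaning moves genuinely cost only $O(n)$ edges in each colour while preserving rainbow-triangle-freeness — this is where the Gallai structure theorem and the case hypothesis $\alpha_1\leq\alpha_2+(1-\sqrt{\alpha_2})^2$ (equivalently the good-pair condition~\eqref{eq: easy case condition}, which rules out the $\mathbf{H}$-type extremal structure) have to be used to pin down $\mathbf{F}(a,b,c)$ as the unique possibility rather than $\mathbf{H}(a,b,c)$.
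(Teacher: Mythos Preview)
Your outline follows the paper's broad shape (minimal counterexample, maximum bi-chromatic matching, cleaning to $\mathbf{G}''$, numerical contradiction), but there are two genuine gaps.

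\textbf{First, the rainbow-edge elimination does not close as written.} Deleting a single vertex $x$ costs up to $2(n-1)$ in each pairwise sum, and your inequality
\[
(\alpha_i+\alpha_j)\binom{n}{2}+5n-2(n-1)\ \geq\ (\alpha_i+\alpha_j)\binom{n-1}{2}+5(n-1)
\]
rearranges to $\bigl((\alpha_i+\alpha_j)-2\bigr)(n-1)+7\geq 0$, which fails for large $n$ whenever $\alpha_i+\alpha_j<2$ (and $\alpha_2+\alpha_3=1+z^2$ can be close to $1$). The paper instead proves the stronger Lemma~\ref{lemma: case 1, no triple matching}: no proper subset $X$ supports a perfect matching in each $G_i[X]$. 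The point is that the matching $M_k$ in colour $k$ forces $\vert G_i[X,V\setminus X]\vert+\vert G_j[X,V\setminus X]\vert\leq \vert X\vert\cdot\vert V\setminus X\vert$ (half the trivial bound), and a similar halving inside $X$; only with this saving does removing $X$ yield a smaller counterexample. Deleting one vertex and ignoring the rainbow structure does not give enough.

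\textbf{Second, and more seriously, the final step does not work as planned.} After cleaning, $\mathbf{G}''$ is \emph{not} forced to be close to an $\mathbf{F}(a,b,c)$-template: Proposition~\ref{prop: easy case, good structure} only gives that each $V_{ij}$ is a bi-chromatic clique in colours $ij$, that there are no bi-chromatic edges between distinct $V_{ij}$'s or inside $D$, and a degree-type bound from $D$ to matching edges. The cross-edges between $V_{12},V_{13},V_{23}$ may sit in any single colour, and $D$ need not be absorbable into any part. (Also, $\mathbf{G}''$ is allowed to contain rainbow triangles --- the cleaning preserves the edge-count inequalities, not Gallai-ness.) So there is no route to ``$\mathbf{G}''\approx\mathbf{F}(a,b,c)$'' and hence no way to invoke uniqueness of the canonical representation. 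What the paper actually does is extract from the structure of $\mathbf{G}''$ a system of seven inequalities in the four unknowns $a_{12},a_{13},a_{23},d$ (Lemma~\ref{lemma: easy case, inequalities for the aij}), e.g.\ $a_{12}(a_{12}+d)>\alpha_1+\alpha_2-1$, and then proves by a non-trivial analytic/optimisation argument (Lemma~\ref{lemma: easy case, contradiction}, with a case split on $\delta$ and some calculus) that these force $a_{12}+a_{13}+a_{23}+d>1$, contradicting $a_{12}+a_{13}+a_{23}+d=1$. This last lemma is where the good-pair hypothesis $2x^2+z^2\geq 1$ and the bound $\alpha_1\geq\frac{1+\tau^2}{2}$ are actually used, and it is the bulk of the work; your proposal does not have a substitute for it.
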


\begin{proof}
Observe that for $n\leq 6$, the statement of Theorem~\ref{theorem: technical version, easy case} is vacuous, since $5n\geq 2\binom{n}{2}$. Suppose Theorem~\ref{theorem: technical version, easy case} is false, and let $N\geq 7$ be the least value of $n$ for which there exists a Gallai $3$-colouring template $\mathbf{G}$ which provides a counterexample. Without loss of generality, we may assume the vertex-set of $\mathbf{G}$ is $V=[N]$. We begin our proof with an analogue of~\cite[Lemma 2.4]{AharoniDeVosdelaMazaMontejanoSamalin20}, which establishes inter alia that there are no rainbow edges.
\begin{lemma}\label{lemma: case 1, no triple matching}
For every non-empty proper subset $X$ of $V$, at least one of the induced subgraphs $G_i[X]$, $i\in [3]$, fails to contain a perfect matching. 
\end{lemma}
\begin{proof}
Let $X$ be a $2\ell$-set in $V$ with $0<\ell<N/2$. Suppose for a contradiction that the graphs $G_1[X]$,  $G_2[X]$ and $G_3[X]$ contain perfect matchings $M_1$, $M_2$ and $M_3$ respectively. We shall bound $\vert G_i[V\setminus X]\vert +\vert G_j[V\setminus X]$ for all distinct colour pairs $ij\in [3]^{(2)}$.

Fix a colour $k\in [3]$, and let $i,j$ denote the other two colours in $[3]$. Let $vv'$ be an edge of $M_k$. Then every vertex $u \in V\setminus X$ can send at most $2$ edges in colour $i$ or $j$ to $\{x,x'\}$ (for otherwise we have a rainbow triangle).  Summing over all edges of $M_3$, it follows that 
\begin{align}\label{eq: bound on ei(X,V-X)+ ej(X, v-X)}
\vert G_i[X, V\setminus X]\vert + \vert G_j[X, V\setminus X]\vert \leq \sum_{vv'\in M_k} \left(\vert G_i[\{v,v'\}, V\setminus X]\vert +\vert G_j[\{v,v'\}, V\setminus X]\vert\right)&\leq 2\ell(N-2\ell).
\end{align}
Next  we show that $\vert G_i[X]\vert + \vert G_j[X]\vert \leq 2\ell^2$. If $\ell=1$, we have nothing to show since $2\binom{2l}{2}=2\ell^2$. On the other hand if $\ell\geq 2$, then consider an edge $uu' \in (G_i\cap G_j)[X]\setminus M_k$. Since $M_k$ is a perfect matching and $uu'\notin M_k$, there exist distinct $v,v' \in X\setminus\{u,u'\}$ such that $uv, u'v'\in M_k$. This in turn implies that $uv', u'v \notin (G_i\cup G_j)[X]\setminus M_k$ (since otherwise one of the sets $\{u,u',v'\}$, $\{u,u',v\}$ would contain a rainbow triangle). Note that the vertices $v,v'$ are uniquely specified by $uu'$ and the matching $M_k$.

Thus given any $uu'\in X^{(2)}\setminus M_k$ we can define a pair of edges $F(uu')=\{uv', u'v\}$, with $v,v'$ as above, such that either $uu'\notin G_i\cap G_j$ or $uv', u'v \notin G_i\cup G_j$. Observe that $F(uu')\cap F(ww')\neq \emptyset$ if and only if $M_k$ contains a matching from $\{u,u'\}$ to $\{w,w'\}$ (i.e.\  if and only if $ww'=vv'$), in which case $F(uu')=F(ww')$. In particular we have that $\vert (G_i\cap G_j)[X]\setminus M_k \vert \leq \vert X^{(2)}\setminus (G_i\cup G_j \cup M_k)\vert$ and thus
\begin{align}\label{eq: bound on ei(X)+ ej(X)}
\vert G_i[X]\vert +\vert G_j[X]\vert &\leq 2\vert M_k\vert + \vert (G_i\cup G_j)[X]\setminus M_k\vert +\vert (G_i\cap G_j)[X]\setminus M_k\vert &\leq 2\ell +\left(\binom{2\ell}{2}-\ell\right)=2\ell^2.
\end{align}
Putting~\eqref{eq: bound on ei(X,V-X)+ ej(X, v-X)} and~\eqref{eq: bound on ei(X)+ ej(X)} together, we have
\begin{align}\label{eq: bound on Gi Gj -X}
\vert G_i[V\setminus X]\vert +\vert G_j[V\setminus X]\vert &= \vert G_i\vert   -\vert G_i[X, V \setminus X]\vert -\vert G_i[X]\vert + \vert G_j\vert   -\vert G_j[X, V \setminus X]\vert -\vert G_j[X]\vert \notag \\
&\geq \left(\alpha_i +\alpha_j\right)\binom{N}{2} +5N-2\ell (N-2\ell) -2\ell^2.
\end{align}
\begin{claim}\label{claim: removing X works}
	$\left(\alpha_i +\alpha_j\right)\binom{N}{2} +5N-2\ell (N-2\ell) -2\ell^2\geq \left(\alpha_i +\alpha_j\right)\binom{N-2\ell}{2} +5(N-2\ell)$.
\end{claim}	
\begin{proof}
Rearranging terms, what we must show is
\begin{align}
\left(\alpha_{i}+\alpha_{j}-1\right)2\ell \left(N-\ell\right)+\left(10-\alpha_{i}-\alpha_{j}\right)\ell\geq 0.\label{eq: removing X works, rearranged bound}
\end{align}	
Note first of all that $\alpha_i+\alpha_j\geq 1$. 	Indeed, since $(\alpha_1, \alpha_2)$ is a good pair, this is by definition the case for $\{i,j\}=\{1,2\}$. Further, the definition of $\alpha_3:=1-x^2$ ensures $\alpha_1+\alpha_3=1+y^2$ and $\alpha_2+\alpha_3=1+z^2$ are both at least $1$.  Now, since $X$ was a proper non-empty subset of $V$, we have $N>2\ell$, and hence the first term in the sum on the left hand-side of~\eqref{eq: removing X works, rearranged bound} is non-negative. As $\alpha_i+\alpha_j\leq 2$, the second term in~\ref{eq: removing X works, rearranged bound} is strictly positive.  Thus~\eqref{eq: removing X works, rearranged bound} holds, as required.	
\end{proof}	
\noindent Since $i,j$ were arbitrary, it follows from~\eqref{eq: bound on Gi Gj -X} and Claim~\ref{claim: removing X works} that $\mathbf{G}[V\setminus X]$ is a Gallai $3$-colouring template on $n=N-\vert X\vert < N$ vertices satisfying~\eqref{eq: bound on Gi Gj}, and hence a smaller counterexample to Theorem~\ref{theorem: technical version, easy case}, contradicting the minimality of $N$. 	
\end{proof}
\noindent As in~\cite{AharoniDeVosdelaMazaMontejanoSamalin20}, we have the following corollary to Lemma~\ref{lemma: case 1, no triple matching}:
\begin{proposition}[Observation 2.5 in~\cite{AharoniDeVosdelaMazaMontejanoSamalin20}]\label{prop: obs2.5}
Let $xx'$ and $yy'$ be vertex-disjoint pairs from $V^{(2)}$. Let $\{i,j,k\}=[3]$. Then the following hold:
\begin{enumerate}[1.]
	\item if $xx', yy' \in G_i\cap G_j$, then either\\
	 (a) $\vert G_k[xx', yy']\vert=0$, or \\(b) $\vert G_k[xx', yy']\vert=1$ and $\vert G_i[xx', yy']\vert , \vert G_j[xx', yy']\vert\leq 2$, or\\
	  (c) $\vert G_k[xx', yy']\vert=2$ and $\vert G_i[xx', yy']\vert =\vert G_j[xx', yy']\vert=0$;
	\item   if $xx'\in G_i\cap G_j$ and $yy'\in G_i\cap G_k$, then either\\
	 (a) $\sum_{i=1}^3 \vert G_i[xx, yy']\vert \leq 4$, or\\
	  (b) $\vert G_i[xx', yy']\vert=3$, $\vert G_j[xx', yy']\vert =\vert G_j[xx', yy']\vert =1$, this latter possibility occurring if and only if we have (up to permutations of the pairs $jk$, $xx'$ and $yy'$) $xy\in G_i\cap G_k$, $x'y'\in G_i\cap G_j$ and $xy'\in G_i$.
\end{enumerate}
\end{proposition}
\begin{proof}
	Identical to the (simple case analysis in the) proof of~\cite[Observation~2.5]{AharoniDeVosdelaMazaMontejanoSamalin20} but with our Lemma~\ref{lemma: case 1, no triple matching} replacing~\cite[Lemma 2.4]{AharoniDeVosdelaMazaMontejanoSamalin20}.
\end{proof}
Still following Aharoni et al's approach from~\cite{AharoniDeVosdelaMazaMontejanoSamalin20}, we consider a largest matching $M$ of bi-chromatic edges (called \emph{digons} in~\cite{AharoniDeVosdelaMazaMontejanoSamalin20}), to obtain a partition of the vertex set. For $ij\in [3]^{(2)}$, set $M_{ij}:=M\cap G_i\cap G_j$, and let $V_{ij}$ denote the collection of vertices contained in an edge of $M_{ij}$. Set $D:=V\setminus \left(V_{13}\sqcup V_{23}\sqcup V_{23}\right) $ to be the set of vertices not contained in an edge of $M$. As observed by Aharoni et al, one can perform some local modifications of $\mathbf{G}$ to obtain a new colouring template $\mathbf{G}''$ which is well-structured with respect to the partition $V=V_{13}\sqcup V_{12}\sqcup V_{23}\sqcup D$, may possibly contain rainbow triangles, but importantly satisfies the bounds~\eqref{eq: bound on Gi Gj} up to a small correction term which is linear in $N$. More explicitly, combining~\cite[Claims 1--3]{AharoniDeVosdelaMazaMontejanoSamalin20}, one obtains the following:
\begin{proposition}[Claims 1--3 in~\cite{AharoniDeVosdelaMazaMontejanoSamalin20}]\label{prop: easy case, good structure}
There exists a $3$-colouring template $\mathbf{G''}$  on $V$ 
such that the following hold:
\begin{enumerate}[(i)]
	\item the bound
	$\vert G''_{i}\vert +\vert G''_{j}\vert \geq \vert G_{i}\vert +\vert G_{j}\vert-\frac{3}{2}N > \left(\alpha_{i}+\alpha_{j}\right)\binom{N}{2}+2N$
	holds for all distinct $i$ and $j$; 
	\item $\bigcap_{i=1}^3 G''_i=\emptyset$ (i.e.\ there are no rainbow edges)
	\item for all $ij\in [3]^{(2)}$, $(G''_i\cap G_j'')[V_{ij}]={(V_{ij})}^{(2)}$  (i.e.\ $V_{ij}$ induces a bi-chromatic clique of edges in colours $i$ and $j$, and thus by condition (ii) above contains no edge in the third colour);
	\item  there are no bi-chromatic edges inside $D$ or between distinct sets $V_{ij}$, $ij\in[3]^{(2)}$;
	\item if $y\in D$ and $xx'$ is an edge in $M_{ij}=M\cap{(V_{ij})}^{(2)}$, then 
	$\vert G''_1[\{y\},\{x,x'\}]\vert + \vert G''_2[\{y\}, \{x,x'\}]\vert +\vert G''_3[\{y\},\{x,x'\}]\vert\leq3$, 
	with equality if and only if $\vert G''_i[\{y\}, \{x,x'\}]\vert + \vert G''_j[\{y\}, \{x,x'\}]\vert =3$. 
\end{enumerate}
\end{proposition}
\begin{proof}
Immediate from the construction of the modified colour classes $G''_i$, $i\in [3]$ in ~\cite[Claims 1--3]{AharoniDeVosdelaMazaMontejanoSamalin20} (which only rely on Lemma~\ref{lemma: case 1, no triple matching}, Proposition~\ref{prop: obs2.5} and the self-contained graph theoretic lemma~\cite[Lemma 2.2]{AharoniDeVosdelaMazaMontejanoSamalin20}). 
Note that we started out with a slightly larger linear term in our inequality~\eqref{eq: bound on Gi Gj}, whence the slightly larger term in the expression to the right of  the last inequality in condition (i).
\end{proof}
Set $a_{ij}:=\vert V_{ij}\vert/N$ and $d:=\vert D\vert /N$. We are now ready to proceed with the last part of the proof of Theorem~\ref{theorem: technical version, easy case}, where we use the structure of the colouring template $\mathbf{G}''$ to derive upper bounds for the sizes of its colour classes in terms of $(a_{12}, a_{13}, a_{23}, d)$ (Lemma~\ref{lemma: easy case, inequalities for the aij} below), which we then show contradict the lower bounds from Proposition~\ref{prop: easy case, good structure}(i) (Lemma~\ref{lemma: easy case, contradiction} below). Lemma~\ref{lemma: easy case, contradiction} is also the point in the proof of Theorem~\ref{theorem: technical version, easy case} where we depart from the approach of Aharoni et al~\cite{AharoniDeVosdelaMazaMontejanoSamalin20}.


\begin{lemma}\label{lemma: easy case, inequalities for the aij}
The following inequalities are satisfied:
\begin{align}
a_{12}(a_{12}+d)&> \alpha_1+\alpha_2-1=2x^2+y^2+z^2-1 \label{eq: a 12 bound},\\
a_{13}(a_{13}+d)&> \alpha_1+\alpha_3-1= y^2 \label{eq: a 13 bound},\\
a_{23}(a_{23}+d)&> \alpha_2+\alpha_3-1 =z^2 \label{eq: a 23 bound},\\
\sum_{ij}  a_{ij}(a_{ij} + d)& >  \alpha_1+\alpha_2+\alpha_3-1=x^2+y^2+z^2 \label{eq sum of a ij squared bound}\\
(a_{12})^2 + 2(a_{13})^2 + 2(a_{23})^2 + 2a_{13}d + 2a_{23}d& > 2\alpha_{1} + 2\alpha_{2} + 3\alpha_{3} -3= x^2 + 2y^2 + 2z^2,  \label{eq: a 12 second bound}\\
2(a_{12})^2 + (a_{13})^2 + 2(a_{23})^2 + 2a_{12}d + 2a_{23}d& > 2\alpha_{1} + 3\alpha_{2} + 2\alpha_{3}-3, \label{eq: a 13 second bound}\\
2(a_{12})^2 + 2(a_{13})^2 + (a_{23})^2 + 2a_{12}d + 2a_{13}d& > 3\alpha_{1} + 2\alpha_{2} + 2\alpha_{3}-3. \label{eq: a 23 second bound}
\end{align}	
\end{lemma}
\begin{proof}
For inequality~\eqref{eq: a 12 bound},  we bound the sum of the number of edges in colours $1$ and $2$. Clearly a pair of vertices from $V$ can contribute at most $2$ to the sum $\vert G''_1\vert +\vert G''_2\vert$. However by Proposition~\ref{prop: easy case, good structure}(iii) and (iv), pairs of vertices from $\left(V_{13}\right)^{(2)}$, $\left(V_{23}\right)^{(2)}$ and $D^{(2)}$ contribute at most $1$ to this sum. Further,  by Proposition~\ref{prop: easy case, good structure}(iv), a vertex-pair $xx'$ with $x$, $x'$ coming from two different sets $V_{ij}$ can contribute at most $1$ to this sum.  Finally, by Proposition~\ref{prop: easy case, good structure}(v), each edge from $M_{13}$ or $M_{23}$ sends at most two edges in colours $1$ or $2$ to a vertex $y\in D$, while each edge of $M_{12}$ sends at most three edges in colours $1$ or $2$ to a vertex $y\in D$. Summing over all such edges, we see that the total contribution to $\vert G''_1\vert +\vert G''_2\vert$ from vertex pairs $xy$ with $x\in V_{13}\cup V_{23}$ and $y\in D$ is at most $\left(\vert V_{13}\vert + \vert V_{23}\vert \right) \cdot \vert D\vert$, while the contribution from pairs $xy$ with $x\in V_{12}$ and $y\in D$ is at most $\frac{3}{2} \vert V_{12} \vert \cdot \vert D\vert$. It follows from this analysis that
\[ \vert G''_1\vert +\vert G''_2\vert \leq \binom{N}{2} + \binom{\vert V_{12}\vert}{2}+\frac{1}{2}\vert V_{12}\vert \cdot \vert D\vert = \binom{N}{2} +\binom{a_{12}N}{2}+\frac{1}{2}a_{12}dN^2 < \binom{N}{2}\left(1 + a_{12}(a_{12}+d)\right)+N.\]
Combining this upper bound with the lower bound for $\vert G''_1\vert +\vert G''_2\vert $ from Proposition~\ref{prop: easy case, good structure}(i), subtracting $N$ from both sides and dividing through by $\binom{N}{2}$, we get the desired inequality~\eqref{eq: a 12 bound}. Inequalities~\eqref{eq: a 13 bound} and~\eqref{eq: a 23 bound} are obtained in the same way, mutatis mutandis.

Next we turn our attention to the proof of inequality~\eqref{eq sum of a ij squared bound}. This is done by bounding the number of edges in colours $1$, $2$ and $3$. We see that each pair $xx'$ contributes at most one to the sum $\sum_{i}\vert G''_{i}\vert	$, with two exceptions. If $x,x'\in V_{ij}$, then $xx'$ is a bi-chromatic edge and contributes $2$ to this sum. Finally, some pairs $x\in V_{ij}$, $y\in D$ may also contribute up to $2$ to this sum; we bound the contribution of those pairs by appealing to Proposition~\ref{prop: easy case, good structure}(v) which implies that for each pair $xx'$ from $M_{ij}$, the sum of the contributions from $xy$ and $x'y$  to $\sum_{i} \vert G''_i\vert $ is at most $3$. Summing over all $\vert M_{ij}\vert = \vert V_{ij}\vert /2$ pairs $xx'\in M_{ij}$, we get
\begin{align}\label{eq: intermediary}
\sum_i \vert G''_i\vert \leq \binom{N}{2}+ \sum_{ij}  \left(\binom{\vert V_{ij}\vert }{2}+\frac{1}{2}\vert M_{ij}\vert \cdot \vert D\vert \right)< \binom{N}{2}\left(1+\sum_{ij} \left( (a_{ij})^2    +a_{ij}d \right)\right)  +3N.  \end{align}
On the other hand, summing up the lower bounds for $\vert G''_i\vert + \vert G''_j\vert $  we get from Proposition~\ref{prop: easy case, good structure}(i) for all three pairs $ij\in [3]^{(2)}$, we have
\[2\sum_i \vert G''_i\vert \geq 2\left(\sum_i\alpha_i\right)\binom{N}{2} +6N \]
Now, 
$\sum_{i}\alpha_i= x^2+y^2+z^2+1$, so combining this lower bound with the upper bound in~\eqref{eq: intermediary}, we get the desired inequality~\eqref{eq sum of a ij squared bound}. 

Inequalities~\eqref{eq: a 12 second bound}, ~\eqref{eq: a 13 second bound} and~\eqref{eq: a 23 second bound} can be proved similarly.
For instance, \eqref{eq: a 12 second bound} follows by counting edges in $G''_1$ and $G''_2$ twice and edges in $G''_3$ three times, and analysing how many times different types of pairs can be counted in this sum. Inequalities~\eqref{eq: a 13 second bound} and~\eqref{eq: a 23 second bound} can be proved by counting similar linear combinations of the $\vert G''_i\vert$.
\end{proof}

\noindent We shall now derive a contradiction from the system of inequalities we have derived (which unfortunately requires a significant amount of careful calculations). To do so, we shall make use of the following simple fact.
\begin{proposition}
	\label{prop:SumOfSquares}Let $b_{0}$, $c_{0}$ and $s$ be given
	non-negative reals satisfying $c_0\leq b_0$ and $2b_0+c_0\leq s$. Then the expression $a^{2}+b^{2}+c^{2}$ attains its maximum value subject to the conditions $b\geq b_{0}$, $c\geq c_{0}$,
	$a+b+c=s$ and $a\geq b\geq c$ uniquely when $a=s-b_{0}-c_{0}$, $b=b_{0}$ and $c=c_{0}$. 
\end{proposition}
\begin{proof}
	Immediate from the convexity of the function $x\rightarrow x^{2}$. 
\end{proof}

\begin{lemma}\label{lemma: easy case, contradiction}
Suppose that $a_{12}$, $a_{13}$, $a_{23}$ and $d$ are non-negative real numbers satisfying inequalities~\eqref{eq: a 12 bound}--\eqref{eq: a 23 second bound}. Then we have  $a_{12} + a_{13} + a_{23} + d > 1$. 
\end{lemma}

\begin{proof}

Since $(\alpha_1, \alpha_2)$ is a good pair, we have by definition $\alpha_1\geq \alpha_2$ and $\alpha_2-\alpha_3=2x^2+z^2-1\geq 0$, and hence $\alpha_2\geq \alpha_3$. In particular, the right hand-side in the inequalities~\eqref{eq: a 12 bound}, \eqref{eq: a 13 bound} and \eqref{eq: a 23 bound} form a decreasing sequence.   On the other hand, for $d$ fixed, the expressions on the left hand-side of the inequalities inequalities~\eqref{eq: a 12 bound}, \eqref{eq: a 13 bound} and \eqref{eq: a 23 bound} are increasing functions of $a_{12}$, $a_{13}$ and $a_{23}$ respectively. Similarly the right-hand sides of the inequalities~\eqref{eq: a 12 second bound}, \eqref{eq: a 13 second bound} and \eqref{eq: a 23 second bound} form an increasing sequence, and for $d$ fixed, the expressions on the left hand side are increasing functions of $a_{12}$, $a_{13}$ and $a_{23}$ respectively. Since the inequality~\eqref{eq sum of a ij squared bound} is invariant under any permutation of $(a_{12}, a_{13}, a_{23})$, it  follows that we may permute the first three coordinates of $(a_{12}, a_{13}, a_{23}, d)$ to ensure $a_{12}\geq a_{13}\geq a_{23}$, while still satisfying our constraints and without decreasing the value of $a_{12} + a_{13} + a_{23} + d$.

We may thus assume $a_{12}\geq a_{13}\geq a_{23}$  in the remainder of the proof. With this assumption in hand,  some of our inequalities become superfluous. Moving forward in the proof, we relax~\eqref{eq: a 23 bound} to a non-strict inequality and only use~\eqref{eq: a 13 bound}, the relaxed inequality~\eqref{eq: a 23 bound}, \eqref{eq sum of a ij squared bound} and \eqref{eq: a 12 second bound}.

Suppose for the sake of contradiction that we have chosen non-negative real numbers $a_{ij}$ and $d$ so that $a_{12} + a_{13} + a_{23} + d \leq 1$ and the inequalities~\eqref{eq: a 13 bound}, \eqref{eq: a 23 bound}, \eqref{eq sum of a ij squared bound} and \eqref{eq: a 12 second bound} are satisfied. Given the value of $a_{13} + a_{23}$, we can increase the value of $a_{13}$ while decreasing $a_{23}$ without violating the inequalities~\eqref{eq: a 13 bound},  \eqref{eq sum of a ij squared bound} or \eqref{eq: a 12 second bound}, as long as the inequality~\eqref{eq: a 23 bound} remains satisfied and as long the inequality $a_{12} \geq a_{13}$ is still satisfied. This is evident from the symmetric role played by the variables $a_{13}$ and $a_{23}$ and the convexity of the expressions in~\eqref{eq sum of a ij squared bound} and~\eqref{eq: a 12 second bound}. 

Thus we may assume that either $a_{12} = a_{13}$ or the inequality~\eqref{eq: a 23 bound} is tight. First let us suppose that $a_{12} = a_{13}$ and $a_{12} + a_{13} + a_{23} + d \leq 1$. Then it follows that 
\[\frac{1}{2} \leq x^2 + y^2 \leq x^2 + y^2 + z^2 \leq \sum_{ij} a_{ij}\left(a_{ij} + d\right) \leq d(1-d) + \sum_{ij} a_{ij}^{2} \leq d(1-d) + \left(\frac{1-d}{2}\right)^2.\]
However, it is easy to check that the inequality $d(1-d) + \left(\frac{1-d}{2}\right)^2 \geq \frac{1}{2}$ is false for every $d \in [0,1]$, and hence we are done in this case. 

Hence we may suppose that the inequality~\eqref{eq: a 23 bound} is tight, i.e.\ that we have $(a_{23})^2 + da_{23} = z^2$. Hence it follows that
\begin{align}\label{eq: value of a23}
a_{23} = \frac{-d + \sqrt{d^2 + 4z^2}}{2}.
\end{align}
Let $\delta \geq 0$ be chosen so that $a_{13}^2 + da_{13} = y^2 + \delta$, and note that the non-negativity of $\delta$ is guaranteed by~\eqref{eq: a 13 bound}. Hence we have
\begin{align} \label{eq: value of a13}
a_{13} = \frac{-d + \sqrt{d^2 + 4y^2 + 4\delta}}{2}.
\end{align}
Combining this with~\eqref{eq: value of a23}, we can simplify the inequalities~\eqref{eq: a 12 second bound} and~\eqref{eq sum of a ij squared bound} to obtain the following lower bounds for $a_{12}$:
\begin{align}
a_{12} &> \sqrt{x^2 - 2\delta},  \label{eq: a12 first}
\\
(a_{12})^2 +da_{12} &> x^2 - \delta, \qquad \textrm{ implying } \quad a_{12}>\frac{-d+\sqrt{4x^2+d^2-4\delta}}{2}, \label{eq: a12 second}
\\
a_{12} &\geq a_{13} = \frac{-d+\sqrt{d^2+4y^2+4\delta}}{2}. \label{eq: a12 third}
\end{align}

We start by observing that we must have $d > 0$. Indeed, if $d = 0$, then~\eqref{eq: value of a23} and ~\eqref{eq: value of a13} imply that $a_{13} \geq y$ and $a_{23} = z$. Increasing the value of $a_{12}$ if necessary, we may assume that $a_{12} + a_{13} + a_{23} = 1$ without violating~\eqref{eq sum of a ij squared bound}. However, then Proposition~\ref{prop:SumOfSquares} implies that $a_{12}^2 + a_{13}^2 + a_{23}^2 \leq x^2 + y^2 + z^2$, which contradicts ~\eqref{eq sum of a ij squared bound}. Thus we must have $d > 0$. 

Next, we note that we may assume $\delta < \frac{x^2 - y^2}{2}$. 
\begin{claim}\label{claim: x^2-y^2/2 bigger than delta}
If $\delta\geq \frac{x^2 - y^2}{2}$, then $a_{12} + a_{13} + a_{23} + d > 1$.
\end{claim}
\begin{proof}
Suppose $\delta\geq \frac{x^2 - y^2}{2}$. Then~\eqref{eq: value of a13} implies that
\begin{align*}
a_{13} \geq \frac{-d + \sqrt{d^2 + 2(x^2 + y^2)}}{2}
\end{align*}
Since $(\alpha_1, \alpha_2)$ is a good pair, 
\[x^2 + y^2 = \alpha_1 \geq \frac{1+\tau ^2}{2} > \frac{1}{2},\]
from which we deduce that $a_{13} > \frac{-d + 1}{2}$. Thus~\eqref{eq: a12 third} implies that we also have $a_{12} > \frac{-d+1}{2}$, and hence we conclude that $a_{12} + a_{13} + a_{23} + d > 1$, as required. 
\end{proof}
Assuming from now on that $\delta < \frac{x^2 - y^2}{2}$, we make a useful observation on the value of $x$ before splitting our analysis into two cases, depending on which of the two inequalities~\eqref{eq: a12 first} and~\eqref{eq: a12 second} gives the best lower bound for $a_{12}$.
\begin{claim}\label{claim: x at least 1-2tau}
$x\geq 1-2\tau$.	
\end{claim}
\begin{proof}
Since 	$(\alpha_1, \alpha_2)$ is a good pair, we have
\begin{align*}
1\leq 2x^2 + z^2 \leq 2x^2+\left(\frac{1-x}{2}\right)^2.
\end{align*}
Solving the associated quadratic inequality and using the fact that $x\geq 0$ yields the claimed lower bound on $x$: $x\geq \frac{1+2\sqrt{7}}{9}=1-2\tau$.
\end{proof}

\noindent\textbf{Case 1: $0 \leq \delta \leq d\sqrt{x^2 + d^2} - d^2$.} Let us fix $d>0$, and define the function $f\left(\delta\right)=f_{x,y,z,d}(\delta)$ for $\delta \in \left[0, d\sqrt{x^2+d^2} - d^2\right]$ by setting 
\begin{align*}
f\left(\delta\right):=\sqrt{x^2-2\delta} + \frac{\sqrt{d^2+4y^2+4\delta} + \sqrt{d^2+4z^2}}{2},
\end{align*}
and observe that by~\eqref{eq: a12 first} we have $a_{12} + a_{13} + a_{23} + d \geq f(\delta)$. Thus our aim is to prove that the least value of $f$ on this interval is strictly greater than $1$. The derivative of $f$ can be written as 
\begin{align*}
f'\left(\delta\right) = \frac{x^2 - 4y^2 -d^2 - 6\delta}{\sqrt{\left(x^2-2\delta\right)\left(d^2+4y^2+4\delta\right)}\left(\sqrt{x^2-2\delta} + \sqrt{d^2+4y^2+4\delta}\right)}.
\end{align*}
In particular, there exists a constant $c = \frac{x^2-4y^2 - d^2}{6}$ so that $f$ is increasing on $\left[0,c\right]$ and $f$ is decreasing on $\left[c,d\sqrt{x^2+d^2}-d^2\right]$. Hence $f$ attains its smallest value when $\delta = 0$ or $\delta = d\sqrt{x^2+d^2}-d^2$ (note that $c$ may not belong to the interval $\left[0,d\sqrt{x^2+d^2}-d^2\right]$, but the conclusion still remains true). Since $f(0) = x + \sqrt{y^2 + \frac{d^2}{4}} + \sqrt{z^2 + \frac{d^2}{4}} > x + y + z = 1$, we may turn our attention to analysing $f\left(d\sqrt{x^2+d^2} - d^2\right)$. It is easy to check that we have 
\begin{align} \label{eq:lower bound for f}
f\left(d\sqrt{x^2+d^2} - d^2\right) = \sqrt{x^2 + d^2} -d + \frac{\sqrt{4y^2 + 4d\sqrt{x^2+d^2}-3d^2}+\sqrt{4z^2+d^2}}{2} .
\end{align}

Let us consider~\eqref{eq:lower bound for f} with $z$ and $d$ fixed, and varying $x$ and $y$ while keeping $x+y$ as constant. Set $s := x+y$, and note that by Claim~\ref{claim: x at least 1-2tau} we have $x\geq 1-2\tau$ and thus $s \geq \frac{1+x}{2} \geq 1 - \tau$. Rewriting~\eqref{eq:lower bound for f} as a function $g(x)=g_{s,z,d}(x)$ of $x$, we obtain  
\begin{align} \label{eq function g}
g(x):=f\left(d\sqrt{x^2+d^2} - d^2\right) = \sqrt{x^2+d^2} -d + \frac{\sqrt{4(x-s)^2 + 4d\sqrt{x^2+d^2}-3d^2}}{2} + \frac{\sqrt{4z^2+d^2}}{2} ,\notag
\end{align}
whose derivative is given by
\begin{align*} 
g'(x) = \frac{x}{\sqrt{x^2+d^2}} + \frac{2(x-s) + \frac{xd}{\sqrt{x^2+d^2}}}{\sqrt{4(x-s)^2+4d\sqrt{x^2+d^2}-3d^2}}.
\end{align*}
Our aim is to show that $g'(x)$ is positive for $1- 2\tau \leq x \leq 1$. We first note that $4d\sqrt{x^2+d^2} - 3d^2 > d^2$. Since $x - s < 0$, we obtain that 
\begin{align*}
g'(x) \geq \frac{x}{\sqrt{x^2+d^2}} - \frac{2(s-x)}{\sqrt{4(x-s)^2 + d^2}}.
\end{align*}
Thus it suffices to prove that $g'(x) > 0$ in order to deduce that $x^2\left(4(s-x)^2 + d^2\right) > 4(s-x)^2 \left(x^2 + d^2\right)$. This follows from the fact that $2(s-x) - x \leq 2(1-x) - x \leq 6\tau -1 < 0$. Hence it suffices to prove that $f\left(d\sqrt{x^2+d^2} - d^2\right) > 1$ when $x = 1 - 2\tau$ and $y + z = 2\tau$. 

We now substitute the value $x=1-2\tau$ into~\eqref{eq:lower bound for f} and set  $h(d)=h_{y,z}(d):=f\left(d\sqrt{(1-2\tau)^2+d^2} - d^2\right)$. By differentiating and using the facts that $y \leq 1-x=2\tau$, that $z \leq \frac{1-x}{2}\leq \tau$, and that $4\sqrt{x^2+d^2}+\frac{4d^2}{\sqrt{x^2+d^2}}\geq 6d$, we get
\begin{align*}
h'(d) &= \frac{d}{\sqrt{x^2+d^2}} + \frac{4\sqrt{x^2+d^2} + \frac{4d^2}{\sqrt{x^2+d^2}}-6d}{4\sqrt{4y^2+4d\sqrt{x^2+d^2}-3d^2}} + \frac{d}{2\sqrt{4z^2+d^2}} - 1
\\ &\geq \frac{d}{\sqrt{(1-2\tau)^2+d^2}} + \frac{4\sqrt{(1-2\tau)^2 + d^2} + \frac{4d^2}{\sqrt{(1-2\tau)^2+d^2}} - 6d}{4\sqrt{16\tau^2 + 4d\sqrt{(1-2\tau)^2 + d^2} - 3d^2}} + \frac{d}{2\sqrt{4\tau^2 + d^2}} - 1. 
\end{align*}
Let $k(d)$ denote the function on the right hand side of the inequality above. 
As shown in the Appendix (inequality~\eqref{eq1}), the function $k(d)$ is positive for $d \in \left[0,1\right]$. In particular, it follows that $h'(d)$ is positive for all $d \in \left[0,1\right]$, and hence $h(d)$ is increasing. Thus $h(d) > h(0) = x + y + z = 1$ for all $d > 0$, which implies that $f\left(d\sqrt{x^2+d^2} - d^2\right) > 1$. Hence $a_{12} + a_{13} + a_{23} + d > 1$ whenever $\delta \in \left[0, d\sqrt{x^2+d^2} - d^2\right]$. This concludes the proof in this case.

\noindent\textbf{Case 2: $\delta \in \left[d\sqrt{x^2+d^2} - d^2, \frac{x^2-y^2}{2}\right]$.} 
Let $\ell(\delta)=\ell_{x,y,z,d}(\delta)$ denote the  function given by
\begin{align}
\ell\left(\delta\right) := \frac{\sqrt{d^2+4x^2-4\delta} + \sqrt{d^2+4y^2+4\delta} + \sqrt{d^2 + 4z^2}}{2} - \frac{d}{2}. 
\end{align}
Note that~\eqref{eq: value of a13}, \eqref{eq: value of a23} and \eqref{eq: a12 second} imply that we have $a_{12} + a_{13} + a_{23} + d \geq \ell\left(\delta\right)$. The derivative of $\ell$ is given by
\begin{align*} 
\ell'(\delta) =  \frac{1}{\sqrt{d^2+4y^2+4\delta}} - \frac{1}{\sqrt{d^2 + 4x^2 - 4\delta}}. 
\end{align*}
Since $\delta \leq \frac{x^2- y^2}{2}$, it follows that $d^2 + 4x^2 - 4\delta \geq d^2 + 4y^2 + 4\delta$, and hence we have $\ell'(\delta) \geq 0$ for $\delta \leq \frac{x^2 - y^2}{2}$. Thus $\ell(\delta)$ attains its minimum on our interval $\left[d\sqrt{x^2+d^2} - d^2, \frac{x^2-y^2}{2}\right]$ when $\delta = d\sqrt{x^2+d^2} - d^2$. As the inequalities~\eqref{eq: a12 first} and~\eqref{eq: a12 second} give the same bound for $a_{12}$ when $\delta = d\sqrt{x^2+d^2} - d^2$, we conclude that $\ell\left(d\sqrt{x^2+d^2} - d^2\right) \geq f\left(d\sqrt{x^2+d^2} - d^2\right)>1$ (the latter strict inequality being proved in our analysis of Case 1). Thus in this case also we must have $a_{12} + a_{13} + a_{23} + d > 1$. Combined with Claim~\ref{claim: x^2-y^2/2 bigger than delta}, our case analysis proves Lemma~\ref{lemma: easy case, contradiction}.
\end{proof}
\noindent Now the conclusion Lemma~\ref{lemma: easy case, contradiction} contradicts the fact that we have $a_{12} + a_{13} + a_{23} + d = 1$; this contradiction shows no counterexample to Theorem~\ref{theorem: technical version, easy case} exists, concluding the proof of the theorem.
\end{proof}

\subsection{The $\mathbf{H}$-extremal region: the case $\alpha_1\geq \alpha_2 +(1-\sqrt{\alpha_2})^2$}

Given a $3$-colouring template $\mathbf{G}$ on $N$ vertices with $\vert G_1\vert \geq \max\left\{\vert G_2 \vert, \vert G_3 \vert\right\}$, we define the function
\[g(\mathbf{G}):= \vert G_1\vert + \vert G_2\vert + \vert G_3\vert -2\binom{N}{2}- 2\max\left\{\vert G_2\vert, \vert G_3\vert\right\}+ 2\sqrt{\binom{N}{2}\max\left\{\vert G_2\vert , \vert G_3\vert \right\}}.\]

\begin{theorem}\label{theorem: technical version, hard case}
There exists an absolute constant $C>5$ such that the following hold: 
 if $\mathbf{G}$ is a $3$-colouring template on $n$ vertices satisfying $\vert G_1\vert \geq \vert G_2\vert \geq \vert G_3 \vert$ and
	\begin{align}\label{eq: bound on sum Gi Gj Gk hard case}
	g(\mathbf{G}) \geq Cn,
	\end{align}
  then $\mathbf{G}$ contains a rainbow triangle.
\end{theorem}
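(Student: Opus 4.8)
\textbf{Proof strategy for Theorem~\ref{theorem: technical version, hard case}.}

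The plan is to mirror the architecture of the proof of Theorem~\ref{theorem: technical version, easy case}, but with one crucial extra twist tailored to the $\mathbf{H}$-extremal regime. As in the easy case, I would argue by contradiction: suppose the statement fails, and among all counterexamples pick one on the fewest vertices $N$; then, as foreshadowed in Section~\ref{subsection: proof strategy}, among all $N$-vertex counterexamples pick one maximising $\vert G_1\vert$ (and, say, then maximising $\vert G_2\vert$). The first step is a minimality lemma in the spirit of Lemma~\ref{lemma: case 1, no triple matching}: if some proper non-empty $X\subseteq V$ has the property that all three of $G_1[X], G_2[X], G_3[X]$ contain perfect matchings, then deleting $X$ should yield a smaller template that still satisfies the hypothesis $g(\mathbf{G}[V\setminus X])\geq C(N-\vert X\vert)$, contradicting minimality. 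The function $g$ is more awkward than the quantity $\vert G_i\vert+\vert G_j\vert$ used before — it involves a square root and a $\max$ — so the corresponding ``Claim~\ref{claim: removing X works}''-type estimate will require checking that the loss incurred in each colour class upon deleting a $2\ell$-set (which, exactly as in~\eqref{eq: bound on ei(X,V-X)+ ej(X, v-X)} and~\eqref{eq: bound on ei(X)+ ej(X)}, is at most $2\ell(N-2\ell)+2\ell^2$ per pair of colours, plus the bi-chromatic clique on $X$ itself) is absorbed by the corresponding decrease in $\binom{N}{2}$ and $\sqrt{\binom{N}{2}\vert G_2\vert}$; this is where the constant $C$ must be taken large. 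In particular this rules out rainbow edges, and then the verbatim analogue of Proposition~\ref{prop: obs2.5} holds.

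Next, exactly as before, I would take a largest matching $M$ of bi-chromatic edges, obtain the partition $V = V_{12}\sqcup V_{13}\sqcup V_{23}\sqcup D$, and invoke the Aharoni et al.\ modification procedure (Proposition~\ref{prop: easy case, good structure}) to pass to a well-structured template $\mathbf{G}''$ whose colour-class sizes differ from those of $\mathbf{G}$ by only $O(N)$, for which $\bigcap_i G''_i=\emptyset$, each $V_{ij}$ is a bi-chromatic clique in colours $ij$, there are no bi-chromatic edges between parts or inside $D$, and each edge of $M_{ij}$ sends at most $3$ edges (total over all colours) to each vertex of $D$, with the ``$3$'' only achievable in colours $i,j$. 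Setting $a_{ij}=\vert V_{ij}\vert/N$ and $d=\vert D\vert/N$, the same edge-counting arguments as in Lemma~\ref{lemma: easy case, inequalities for the aij} give, up to $O(1/N)$ error, upper bounds on $\vert G''_i\vert/\binom{N}{2}$ and on weighted sums $\sum c_i\vert G''_i\vert/\binom{N}{2}$ in terms of the quadratic forms $a_{ij}(a_{ij}+d)$. Feeding these into the definition of $g$ and using $g(\mathbf{G}'')\geq g(\mathbf{G})-O(N)\geq (C-O(1))N>0$, I get a system of polynomial inequalities in $(a_{12},a_{13},a_{23},d)$ — now genuinely involving a square-root term coming from $\sqrt{\binom{N}{2}\max\{\vert G_2\vert,\vert G_3\vert\}}$ — together with the identity $a_{12}+a_{13}+a_{23}+d=1$ (since the parts partition $V$). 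The goal of the final, purely analytic step is to show this system is infeasible. Here the extra leverage from having chosen $\mathbf{G}$ to maximise $\vert G_1\vert$ enters: I expect it forces the structure towards the bipartite configuration $\mathbf{H}$ with $\vert C\vert = o(N)$ (equivalently one of the $a_{ij}$, or $d$, is forced to be $0$), which collapses the square-root term and makes the remaining optimisation tractable — one then checks that along this degenerate locus the quantity $g$ is exactly $h(\upsilon)$-type-extremal and cannot exceed $0$ by a linear amount.

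The main obstacle, as in the easy case, will be the final analytic step: extracting a contradiction from the quadratic/irrational inequality system. Two features make it harder than Lemma~\ref{lemma: easy case, contradiction}. First, $g$ is not a linear combination of the $\vert G_i\vert$, so one cannot simply sum up pairwise lower bounds; the $\sqrt{\vert G_2\vert}$ term has to be handled by a substitution (e.g.\ parametrising $\vert G_2\vert/\binom{N}{2}$ directly and carrying a new variable for its square root, as is done implicitly via the canonical representation $x=\sqrt{\alpha_2}$ on the $\mathcal{R}'_1$--$\mathcal{R}_2$ boundary). Second, the ``maximise $\vert G_1\vert$'' reduction needs its own argument: one must show that if $\mathbf{G}$ were a counterexample with $\vert C\vert$ (the left-over part after the degeneration) linear in $N$, then one could add edges to $G_1$ — or otherwise locally modify — to produce another $N$-vertex counterexample with strictly larger $\vert G_1\vert$, which is where rainbow-triangle-freeness and the structure of $\mathbf{G}''$ must be used carefully. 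I would expect to split into cases according to which of $a_{13}, a_{23}, d$ is zero (paralleling the Case~1/Case~2 split and the use of Proposition~\ref{prop:SumOfSquares} in the easy case), reducing each to a one-variable calculus inequality of the sort relegated to the Appendix, and taking $C$ large enough at the very end to swallow all accumulated $O(N)$ error terms.
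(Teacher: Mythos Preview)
Your high-level architecture is right --- minimal counterexample, maximise $\vert G_1\vert$, kill rainbow edges, partition via a maximal bi-chromatic matching, modify, then derive a numerical contradiction --- but you have mislocated the role of the ``maximise $\vert G_1\vert$'' hypothesis, and this is the crux of the whole argument.

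In the paper, maximising $\vert G_1\vert$ is exploited \emph{first}, not last, via the union--intersection trick of Keevash--Saks--Sudakov--Verstra\"ete: if $G_2\not\subseteq G_1$, replace $(G_1,G_2)$ by $(G_1\cup G_2,\, G_1\cap G_2)$. This preserves rainbow-$K_3$-freeness and keeps $\sum_i\vert G_i\vert$ constant; one checks that the new second-largest colour class still has size at least $\tfrac14\binom{N}{2}$, so that $f_N$ is in its increasing regime and $g$ does not decrease. Maximality of $\vert G_1\vert$ then forces $G_2\cup G_3\subseteq G_1$. Combined with the absence of rainbow edges (proved by deleting the two endpoints of a putative rainbow edge --- only the $\ell=1$ case of your proposed ``no triple matching'' lemma, and even that requires a careful asymptotic estimate for how $g$ behaves under the deletion), this gives $G_2\cap G_3=\emptyset$. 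Hence there are \emph{no} bi-chromatic edges in colours $23$ at all: the partition is just $V=V_{12}\sqcup V_{13}\sqcup D$, with $a_{23}=0$ from the outset rather than as the conclusion of an optimisation.

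This is what makes the endgame short rather than long. After a bespoke modification procedure (not Proposition~\ref{prop: easy case, good structure}: one needs new local observations specific to the three-part partition, an auxiliary graph $A$ on $M_{12}\cup M_{13}$, and the Aharoni et al.\ graph lemma bounding $\vert A[M_{12},M_{13}]\vert$ by $\vert A[M_{12}]\vert+\vert A[M_{13}]\vert+O(N)$) one reaches a template $\mathbf{G}''$ with $\vert G''_j\vert\leq\binom{\vert V_{1j}\vert}{2}+\tfrac12\vert V_{1j}\vert\,\vert D\vert$ for $j\in\{2,3\}$. A six-line computation using nothing more than the AM--GM bound $\sqrt{x_{12}(x_{12}+d)}\leq x_{12}+\tfrac{d}{2}$ then gives $g(\mathbf{G}'')\leq 3N$; there is no case split and no Lemma~\ref{lemma: easy case, contradiction}-style calculus. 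Your plan --- carry $V_{23}$ through the modification, re-use Proposition~\ref{prop: easy case, good structure}, and only invoke maximality of $\vert G_1\vert$ at the end to argue some part vanishes --- would leave you with a genuinely harder optimisation (four variables and a square root), and the mechanism you sketch (``add edges to $G_1$ to contradict maximality'') is not how the reduction actually works.
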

\begin{remark}
Setting $\vert G_i\vert =\alpha_i \frac{n^2}{2}$ for $i\in [3]$ and assuming $\alpha_3\leq \alpha_2$, \eqref{eq: bound on sum Gi Gj Gk hard case}  implies after rearranging terms and dividing through by $\frac{n^2}{2}$ that $\alpha_3\geq 2-\alpha_1+\alpha_2-2\sqrt{\alpha_2}+\Omega(n^{-1})$, which up to the error term is exactly the bound we require in Theorem~\ref{theorem: forcing densities} part (b).	
\end{remark}

\begin{proof}
Let $C>0$ be a sufficiently large constant to be specified later. It will be convenient to give a name to the function of $\max\left\{\vert G_2\vert, \vert G_3\vert \right\}$ involved in the definition of $g(\mathbf{G})$. Set therefore $f_n: \ \mathbb{R}_{\geq 0}\rightarrow \mathbb{R}$ to be the function given by
\[f_n\left(x\right):=x-\sqrt{x{n \choose 2}}.\]
When $n$ is clear from context, we often omit the subscript $n$ and write $f$ for the function $f_n$.
\begin{proposition}\label{prop: f minimum}
	The function $f$ is strictly decreasing in the interval $[0, \frac{1}{4}\binom{n}{2}]$ and strictly increasing in  the interval $[\frac{1}{4} \binom{n}{2}, \binom{n}{2}]$. Its unique minimum in $[0,\binom{n}{2}]$ is $f(\frac{1}{4}\binom{n}{2})=-\frac{1}{4}\binom{n}{2}$.
\end{proposition}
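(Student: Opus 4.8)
The plan is to treat this as an elementary single-variable calculus (or completing-the-square) exercise. Write $B:=\binom{n}{2}$ for brevity, so that $f(x)=x-\sqrt{Bx}$ on $[0,B]$. The cleanest route is a change of variables: substituting $u:=\sqrt{x}$, which is a strictly increasing bijection from $[0,B]$ to $[0,\sqrt{B}]$, we get $f=u^2-\sqrt{B}\,u=\left(u-\tfrac{\sqrt{B}}{2}\right)^2-\tfrac{B}{4}$. This is a parabola in $u$, strictly decreasing for $u\in[0,\tfrac{\sqrt{B}}{2}]$ and strictly increasing for $u\in[\tfrac{\sqrt{B}}{2},\sqrt{B}]$, with unique minimum value $-\tfrac{B}{4}$ attained at $u=\tfrac{\sqrt{B}}{2}$, i.e.\ at $x=\tfrac{B}{4}=\tfrac14\binom{n}{2}$. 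Pulling this back through the increasing substitution $x\mapsto u=\sqrt{x}$ gives exactly the claimed monotonicity on $[0,\tfrac14\binom{n}{2}]$ and $[\tfrac14\binom{n}{2},\binom{n}{2}]$ and the claimed minimum value $f(\tfrac14\binom{n}{2})=-\tfrac14\binom{n}{2}$.

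Alternatively, one can differentiate directly: $f$ is continuous on $[0,B]$ and differentiable on $(0,B]$ with $f'(x)=1-\tfrac{\sqrt{B}}{2\sqrt{x}}$, which is negative precisely when $\sqrt{x}<\tfrac{\sqrt{B}}{2}$, i.e.\ when $x<\tfrac{B}{4}$, and positive precisely when $x>\tfrac{B}{4}$. Hence $f$ is strictly decreasing on $[0,\tfrac{B}{4}]$ and strictly increasing on $[\tfrac{B}{4},B]$ (using continuity at the endpoint $0$), and evaluating at the unique critical point gives $f(\tfrac{B}{4})=\tfrac{B}{4}-\sqrt{B\cdot\tfrac{B}{4}}=\tfrac{B}{4}-\tfrac{B}{2}=-\tfrac{B}{4}$.

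There is essentially no obstacle here; the only minor care needed is that $f$ is not differentiable at $x=0$, so the monotonicity claim on the closed interval $[0,\tfrac14\binom{n}{2}]$ is obtained by combining strict monotonicity on the open interval with continuity of $f$ at $0$ (or, in the substitution approach, this is automatic since the parabola in $u$ is analytic everywhere). I would present the completing-the-square version as the main argument since it is shortest and sidesteps the non-differentiability point entirely.
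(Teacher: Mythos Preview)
Your proof is correct and matches the paper's approach: the paper's proof consists solely of the words ``Simple calculus,'' and your direct-differentiation argument (or the equivalent completing-the-square version) is exactly the kind of routine verification this phrase is gesturing at. There is nothing to add.
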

\begin{proof}
Simple calculus.	
\end{proof}	
\noindent
Suppose Theorem~\ref{theorem: technical version, hard case} is false, and let $N$ be the least value of $n\geq 3$ for which there exists a Gallai $3$-colouring template satisfying
 the assumptions of Theorem~\ref{theorem: technical version, hard case}. Among such Gallai colouring templates, let $\mathbf{G}$ be one maximising the size of the largest colour class $\vert G_1\vert $.
In the next lemma, we show that the sizes of the vertex set and of the colour classes in this putative counterexample to Theorem~\ref{theorem: technical version, hard case} cannot be too small.
\begin{lemma}\label{lemma: case 2, G2 large} The following hold:
	\begin{enumerate}[(i)]
\item  $\sum_{i=1}^3 \vert G_i\vert >\frac{3}{2}\binom{N}{2} +CN$;
\item $N> 4C$;
		\item 	$\vert G_2\vert > \frac{1}{4}\binom{N}{2}+\frac{C}{2}N$.
	\end{enumerate}
\end{lemma}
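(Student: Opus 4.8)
The strategy is to exploit the minimality of $N$ together with the maximality of $\vert G_1\vert$ among counterexamples on $N$ vertices. I would first establish $(i)$ directly from the hypothesis $g(\mathbf{G})\geq CN$: since $g(\mathbf{G})=\sum_i\vert G_i\vert - 2\binom{N}{2} + f_N(\vert G_2\vert)$ (writing $\vert G_2\vert = \max\{\vert G_2\vert,\vert G_3\vert\}$), and since $f_N$ attains minimum $-\tfrac14\binom{N}{2}$ by Proposition~\ref{prop: f minimum}, we get $\sum_i\vert G_i\vert \geq 2\binom{N}{2} - f_N(\vert G_2\vert) + CN \geq 2\binom{N}{2} - \tfrac14\binom{N}{2} + CN = \tfrac74\binom{N}{2} + CN$, which is even stronger than $(i)$. (I should double-check the sign: $g = \sum\vert G_i\vert - 2\binom{N}{2} + (\vert G_2\vert - \sqrt{\vert G_2\vert\binom{N}{2}}) = \sum\vert G_i\vert -2\binom N2 + f_N(\vert G_2\vert)$; wait, the definition has $-2\max + 2\sqrt{\cdots}$, so actually $g = \sum\vert G_i\vert - 2\binom N2 - 2f_N(\vert G_2\vert)$, giving $\sum\vert G_i\vert \geq 2\binom N2 + 2f_N(\vert G_2\vert) + CN \geq 2\binom N2 - \tfrac12\binom N2 + CN = \tfrac32\binom N2 + CN$, which is exactly $(i)$.)

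For $(ii)$, I would use that each $\vert G_i\vert \leq \binom{N}{2}$, so $(i)$ forces $3\binom{N}{2} > \tfrac32\binom{N}{2} + CN$, i.e. $\tfrac32\binom{N}{2} > CN$, i.e. $\tfrac{3}{4}N(N-1) > CN$, i.e. $N > \tfrac43 C + 1 > 4C$ once we also use that $C$ is large (or more carefully, $\tfrac34(N-1) > C$ gives $N > \tfrac43 C + 1$; to get $N>4C$ I'd instead argue more crudely: if $N\leq 4C$ then $\binom N2 \leq \binom{4C}{2} < 8C^2$, while $g(\mathbf G)\leq \sum\vert G_i\vert \leq 3\binom N2 < 24C^2$ and we'd need $24C^2 \geq CN$... this direction is too weak). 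The cleaner route: combine $(i)$ with $\sum_i \vert G_i\vert \leq 3\binom N2$ to get $3\binom N2 \geq \tfrac32\binom N2 + CN$, hence $\binom N2 \geq \tfrac23 CN$, hence $N-1 \geq \tfrac43 C$, hence $N \geq \tfrac43 C + 1 > 4C$ provided $C\geq 2$, say. I'll want to track the precise constant threshold but that is routine.

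For $(iii)$, the natural approach is a deletion/minimality argument: suppose $\vert G_2\vert \leq \tfrac14\binom N2 + \tfrac C2 N$. Then $\vert G_3\vert \leq \vert G_2\vert$ is also small, and I want to delete a single vertex $v$ to reach a smaller template still satisfying the hypothesis, contradicting minimality of $N$. Deleting $v$ decreases $\binom N2$ by $N-1$ and decreases each $\vert G_i\vert$ by $\deg_{G_i}(v)\leq N-1$. I need to choose $v$ so that $g$ drops by at most $C(N-1) - $ slack; averaging over $v$, the total degree in each $G_i$ is $2\vert G_i\vert$, so there's a vertex with $\deg_{G_1}(v)+\deg_{G_2}(v)+\deg_{G_3}(v) \leq \tfrac2N(\vert G_1\vert+\vert G_2\vert+\vert G_3\vert)$. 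The subtlety is the $\sqrt{\cdot}$ term: $f_N$ is not linear, so I must control how $-2f_N(\vert G_2\vert)$ changes, using that $f_N$ is decreasing on $[0,\tfrac14\binom N2]$ (Proposition~\ref{prop: f minimum}) when $\vert G_2\vert$ is in that range, so deleting a vertex (which decreases $\vert G_2\vert$ and $\binom N2$) can only help. \textbf{The main obstacle} is handling the case $\vert G_2\vert$ slightly above $\tfrac14\binom N2$, where $f_N$ may be increasing, and ensuring the new template still has its largest class labelled correctly; I expect this requires a careful choice of which vertex to delete (e.g. one of small degree in all colours, or small degree in $G_2$ specifically) and a short computation showing $g$ decreases by less than $C$ per vertex removed, which contradicts minimality. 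Alternatively, if $\vert G_2\vert$ is very small the bound $(i)$ together with $\vert G_1\vert,\vert G_3\vert\leq\binom N2$ and $\vert G_3\vert\leq\vert G_2\vert$ may already be contradictory, handling an initial sub-case directly before invoking minimality.
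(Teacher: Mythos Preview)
Your argument for (i) is correct and matches the paper's exactly (once you sort out the sign of $f_N$, as you do).

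Your argument for (ii), however, contains a genuine gap. From $\sum_i \vert G_i\vert \leq 3\binom{N}{2}$ you only deduce $N \geq \tfrac{4}{3}C+1$; the claimed implication ``$N\geq \tfrac{4}{3}C+1 > 4C$ provided $C\geq 2$'' is simply false (it would require $C<\tfrac{3}{8}$, whereas $C>5$). To get $N>4C$ you need the sharper bound $\sum_{i=1}^3\vert G_i\vert \leq 2\binom{N}{2}$, which holds because $\mathbf{G}$ is a \emph{Gallai} template: this is Theorem~\ref{theorem: average density forcing rainbow $K_3$} (or, for all $N$, the elementary observation that every $3$-vertex set supports at most $6$ coloured edges in a rainbow-$K_3$-free template). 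Combining this with (i) gives $\tfrac{1}{2}\binom{N}{2}>CN$, i.e.\ $N>4C+1$.

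For (iii) you are working much too hard. The vertex-deletion/minimality argument you sketch is unnecessary: the direct route you mention at the very end as an ``alternative'' is in fact the whole proof. From (i) and $\vert G_1\vert\leq\binom{N}{2}$ one gets
\[
2\vert G_2\vert \geq \vert G_2\vert+\vert G_3\vert \geq \sum_{i=1}^3\vert G_i\vert-\binom{N}{2} > \tfrac{1}{2}\binom{N}{2}+CN,
\]
which is exactly (iii). No minimality, no analysis of $f_N$ near $\tfrac{1}{4}\binom{N}{2}$, is needed.
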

\begin{proof}  By Proposition~\ref{prop: f minimum} and~\eqref{eq: bound on sum Gi Gj Gk hard case}, we have
	\begin{align*}
	\sum_{i=1}^3\vert G_i\vert &> 2\binom{N}{2}+2f(\vert G_2\vert) +CN\geq \frac{3}{2}\binom{N}{2}+CN,
	\end{align*}
	establishing (i). Further, by Theorem~\ref{theorem: average density forcing rainbow $K_3$}\footnote{Formally, Theorem~\ref{theorem: average density forcing rainbow $K_3$} is stated for $N$ sufficiently large. However in the case $r=3$ it is easy to see that the claimed bound holds for all $N\geq 3$. Indeed, if $\mathbf{G}$ is an $3$-colouring template on $N$ vertices that contains no rainbow triangle, then for any set of vertices $S$ of size $3$ we must have $\sum_{i=1}^3 \vert G_i[S]\vert \leq 6=2\binom{3}{2}$, whence $\sum_{i=1}^3,\vert G_i\vert \leq 2\binom{N}{2}$ by averaging.} we have $\sum_i\vert G_i\vert \leq 2\binom{N}{2}$, which implies $\frac{1}{2}\binom{N}{2}> CN$ and thus $N>4C$. This proves (ii). Finally, observe that (i) implies that
	\begin{align*}
	2\vert G_2\vert \geq \vert G_2\vert +\vert G_3\vert \geq \sum_{i=1}^3 \vert G_i\vert -\binom{N}{2}>\frac{1}{2}\binom{N}{2}+CN,
	\end{align*} 
	and hence $\vert G_2\vert> \frac{1}{4}\binom{N}{2}+\frac{CN}{2}$, proving (iii).
\end{proof}
\noindent Next, we use the maximality of $\vert G_1\vert$ and the minimality of $N$ to prove two key structural lemmas about the colour classes of $\mathbf{G}$.
\begin{lemma}\label{lemma: hard case, G1 contains G2 cup G3}
 $G_{2}\cup G_3\subseteq G_{1}$. 
\end{lemma}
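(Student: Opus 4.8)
The plan is to argue by contradiction, using the extremal choice of $\mathbf{G}$ as a vertex-minimal counterexample which additionally maximises $\vert G_1\vert$. Suppose there is an edge $e\in (G_2\cup G_3)\setminus G_1$; without loss of generality say $e\in G_2\setminus G_1$ (the case $e\in G_3\setminus G_1$ is symmetric up to swapping the roles of colours $2$ and $3$, noting that $f$ only depends on $\max\{\vert G_2\vert,\vert G_3\vert\}$ and $g(\mathbf{G})$ is unchanged under that swap). First I would form the template $\mathbf{G}'$ obtained from $\mathbf{G}$ by adding $e$ to $G_1$, i.e.\ $G_1':=G_1\cup\{e\}$, $G_2':=G_2$, $G_3':=G_3$, on the same vertex set $V=[N]$.

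The key point is that $\mathbf{G}'$ is still rainbow-triangle-free: any rainbow triangle in $\mathbf{G}'$ would have to use the new edge $e$ in colour $1$, say on vertices $\{u,v\}=e$ together with a third vertex $w$, with $uw\in G_j$ and $vw\in G_k$ for $\{j,k\}=\{2,3\}$; but then $\{e,uw,vw\}$ with $e$ taken in colour $2$ (using $e\in G_2$) is already a rainbow triangle in $\mathbf{G}$, contradicting the fact that $\mathbf{G}$ is Gallai. Next I would check that $\mathbf{G}'$ still satisfies the hypotheses of Theorem~\ref{theorem: technical version, hard case}: we have $\vert G_1'\vert=\vert G_1\vert+1$, so $\vert G_1'\vert\geq\vert G_2'\vert\geq\vert G_3'\vert$ still holds, and $g(\mathbf{G}')=g(\mathbf{G})+1\geq g(\mathbf{G})\geq CN$ since adding an edge to $G_1$ increases $\sum_i\vert G_i\vert$ by $1$ and leaves $\max\{\vert G_2\vert,\vert G_3\vert\}$ (hence the $f$-term) unchanged. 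Therefore $\mathbf{G}'$ is an $N$-vertex Gallai $3$-colouring template satisfying \eqref{eq: bound on sum Gi Gj Gk hard case}; since $N$ is minimal and $\mathbf{G}'$ does not contain a rainbow triangle, $\mathbf{G}'$ is also a counterexample on $N$ vertices, but now $\vert G_1'\vert>\vert G_1\vert$, contradicting the maximality of $\vert G_1\vert$ among $N$-vertex counterexamples. This contradiction shows $(G_2\cup G_3)\setminus G_1=\emptyset$, i.e.\ $G_2\cup G_3\subseteq G_1$.

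The only genuinely delicate point — and the step I expect to need the most care — is confirming that $g(\mathbf{G})$ is monotone under the operation of moving an edge of $G_2$ (or $G_3$) into $G_1$, together with the bookkeeping that $\mathbf{G}'$ still has its colour classes ordered by size; both rely on the fact, recorded in Lemma~\ref{lemma: case 2, G2 large}(iii) and the definition of $g$, that the relevant quantity $\max\{\vert G_2\vert,\vert G_3\vert\}$ is unaffected and that $f_n$ is evaluated at this unchanged value. Everything else is the routine observation that adding an edge to one colour class cannot create a rainbow triangle it did not already witness in another colour.
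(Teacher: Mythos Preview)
Your proof has a genuine gap in the step where you claim $\mathbf{G}'$ is rainbow-triangle-free. You argue that if a rainbow triangle in $\mathbf{G}'$ uses $e=uv$ in colour~$1$ together with $uw\in G_j$ and $vw\in G_k$ for $\{j,k\}=\{2,3\}$, then taking $e$ in colour~$2$ yields a rainbow triangle in $\mathbf{G}$. But this is false: if, say, $uw\in G_2$ and $vw\in G_3$, then assigning $e$ colour~$2$ leaves you with two edges in colour~$2$ and one in colour~$3$, which is not rainbow. To produce a rainbow triangle in $\mathbf{G}$ you would need one of $uw,vw$ to lie in $G_1$, and nothing guarantees this. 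Concretely, a triangle $\{u,v,w\}$ with $uv,uw\in G_2\setminus G_1$ and $vw\in G_3\setminus G_1$ is not rainbow in $\mathbf{G}$, yet becomes rainbow the moment you add $uv$ to $G_1$.

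The paper avoids this by using the Keevash--Saks--Sudakov--Verstra\"ete compression: set $G_1':=G_1\cup G_2$, $G_2':=G_1\cap G_2$, $G_3':=G_3$ \emph{simultaneously}. Now any rainbow triangle $(e_1,e_2,e_3)$ in $\mathbf{G}'$ has $e_2\in G_1\cap G_2$, so regardless of whether $e_1$ came from $G_1$ or $G_2$, one can assign $e_1,e_2$ the colours $1,2$ in some order and obtain a rainbow triangle in $\mathbf{G}$. One then checks (using Lemma~\ref{lemma: case 2, G2 large}(i) to ensure $\max\{\vert G_2'\vert,\vert G_3'\vert\}\geq\tfrac14\binom{N}{2}$, and Proposition~\ref{prop: f minimum} together with $\max\{\vert G_2'\vert,\vert G_3'\vert\}\leq\vert G_2\vert$) that $\mathbf{G}'$ still satisfies~\eqref{eq: bound on sum Gi Gj Gk hard case}, whence maximality of $\vert G_1\vert$ forces $\vert G_1\vert\geq\vert G_1\cup G_2\vert$ and so $G_2\subseteq G_1$. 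Your one-edge-at-a-time modification cannot be salvaged without importing this simultaneous swap, since the preservation of the Gallai property genuinely requires that whatever is removed from $G_2$ be added to $G_1$ at the same time.
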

\begin{proof}
We first show $G_2\subseteq G_1$ using a simple idea from~\cite{KeevashSaksSudakovVerstraete04}.
 Consider the $3$-colouring template $\mathbf{G}'$ with colour classes given by  $G'_1=G_1\cup G_2$, $G'_2=G_1\cap G_2$ and $G'_3=G_3$. It it easily checked that $\mathbf{G}'$ is also a Gallai colouring template, 
 and that $\sum_{i=1}^3\vert G'_i\vert=\sum_{i=1}^3 \vert G_i\vert$.

 Our aim is to prove that $\mathbf{G}'$ also satisfies~\eqref{eq: bound on sum Gi Gj Gk hard case}. By Lemma~\ref{lemma: case 2, G2 large}(i), we have
 	\begin{align*}
	2\max \left(\vert G'_2\vert, \vert G'_3\vert  \right)\geq \sum_{i=1}^3 \vert G'_i\vert -\binom{N}{2}=\sum_{i=1}^3 \vert G_i\vert -\binom{N}{2}>\frac{1}{2}\binom{N}{2}+CN,
 	\end{align*} 
whence $g'_2:= \max \left(\vert G'_2\vert, \vert G'_3\vert  \right)$ satisfies $g'_2\geq \frac{1}{4}\binom{N}{2}+\frac{C}{2}N$. Clearly $g'_2\leq \vert G_2\vert$, whence $f(g'_2)\leq f(\vert G_2\vert)$ by Proposition~\ref{prop: f minimum}. We thus have
 \begin{align*}
 \sum_{i=1}^3 \vert G'_i\vert =\sum_{i=1}^3 \vert G_i\vert > 2\binom{N}{2}+2 f(\vert G_2\vert) +CN\geq 2\binom{N}{2}+2 f(g'_2) +CN,
 \end{align*}
so that (after swapping colours $2$ and $3$ if necessary) $\mathbf{G}'$ is also a Gallai template on $N$ vertices satisfying the assumptions of Theorem~\ref{theorem: technical version, hard case}. Since $\mathbf{G}$ was chosen to maximise the size of the first colour class among such counterexamples to Theorem~\ref{theorem: technical version, hard case}, we have that $\vert G_1\vert \geq \vert G'_1\vert=\vert G_1\cup G_2\vert$. Thus $G_2\subseteq G_1$, as claimed.

That $G_3\subseteq G_1$ is proved by using a similar, albeit simpler argument (since now both sides of~\eqref{eq: bound on sum Gi Gj Gk hard case} are unchanged when we replace $G_1$ and $G_3$ by $G_1\cup G_3$ and $G_1\cap G_3$ respectively).
\end{proof}
\begin{lemma}\label{lemma: hard case, no rainbow edges}
	There are no rainbow edges in $\mathbf{G}$: $G_{1}\cap G_2\cap G_3=\emptyset$.
\end{lemma}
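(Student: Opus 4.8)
\noindent\textit{Proof proposal.} The plan is to argue by contradiction. Suppose $\mathbf G$ has a rainbow edge $xy$; since $G_2\cup G_3\subseteq G_1$ by Lemma~\ref{lemma: hard case, G1 contains G2 cup G3}, this simply means $xy\in G_2\cap G_3$. The first step is to read off the local structure at $x$ and $y$. For any third vertex $z$ the triangle $xyz$ is not rainbow, and because $xy$ lies in every colour class a short Hall-type case check (using again $G_2\cup G_3\subseteq G_1$) shows that one of the following holds: $xz$ lies in no colour class, or $yz$ lies in no colour class, or both $xz$ and $yz$ lie in $G_1$ and in no other colour class. In particular, if $xz\in G_2$ then $yz$ lies in no colour class (and symmetrically, swapping $x,y$ and replacing $G_2$ by $G_3$), which yields the degree inequalities
\[
d_{G_2}(x)+d_{G_1}(y)\le N,\quad d_{G_3}(x)+d_{G_1}(y)\le N,\quad d_{G_2}(y)+d_{G_1}(x)\le N,\quad d_{G_3}(y)+d_{G_1}(x)\le N .
\]
Combining these with $d_{G_2}(v),d_{G_3}(v)\le d_{G_1}(v)$ and a little optimisation, one obtains the key bound $\sum_{i=1}^3\bigl(d_{G_i}(x)+d_{G_i}(y)\bigr)\le 3N$.

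The main move is then to delete both $x$ and $y$ and appeal to the minimality of $N$. Writing $\mathbf G'':=\mathbf G[V\setminus\{x,y\}]$ and $\beta_i:=\vert E(G_i'')\vert$, we have $\beta_i=\vert E(G_i)\vert-d_{G_i}(x)-d_{G_i}(y)+1$ (the $+1$ since $xy\in G_i$), so $\sum_i\beta_i\ge\sum_i\vert E(G_i)\vert-3N+3$, while $\max(\beta_2,\beta_3)\le\max(\vert G_2\vert,\vert G_3\vert)$. Plugging these into the definition of $g$, using $\binom N2-\binom{N-2}2=2N-3$ and Proposition~\ref{prop: f minimum} (together with Lemma~\ref{lemma: case 2, G2 large}, which guarantees that $\max(\vert G_2\vert,\vert G_3\vert)$ lies in the increasing branch of $f$), one checks that $g(\mathbf G'')\ge C(N-2)$; after reordering the colour classes of $\mathbf G''$ by size this produces a counterexample to Theorem~\ref{theorem: technical version, hard case} on $N-2<N$ vertices, contradicting the minimality of $N$.

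The subtle point — which I expect to be the main obstacle — is the inequality $g(\mathbf G'')\ge C(N-2)$: the crude estimates above leave plenty of slack unless $\vert G_2\vert$ is extremely close to $\binom N2$, in which case the quantity $f_N(\max(\vert G_2\vert,\vert G_3\vert))-f_{N-2}(\max(\beta_2,\beta_3))$ is only of order $N$ and must be handled with more care. Here the degree inequalities help again: $d_{G_2}(x)+d_{G_2}(y)\le N$ forces $G_2$ to miss at least $2N-2-\bigl(d_{G_2}(x)+d_{G_2}(y)\bigr)\ge N-2$ pairs meeting $\{x,y\}$, so $\vert G_2\vert-\max(\beta_2,\beta_3)\ge \vert G_2\vert-\beta_2=d_{G_2}(x)+d_{G_2}(y)-1$ is automatically of order $N$ precisely in the near-complete regime, which is exactly the extra saving the $g$-estimate needs. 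The one remaining degenerate configuration (when $\vert G_2\vert$ is large but $d_{G_2}(x)+d_{G_2}(y)$ is small, so the non-edge count above is vacuous) is where the extremal choice of $\mathbf G$ — minimal $N$, then maximal $\vert G_1\vert$ — is decisive: one deletes $xy$ from $G_2$ and adds a missing edge $e\notin G_1$ to $G_1$. This leaves $\sum_i\vert E(G_i)\vert$ unchanged, does not increase $\max(\vert G_2\vert,\vert G_3\vert)$, hence does not decrease $g$, keeps the template Gallai for a suitable $e$, and strictly increases $\vert G_1\vert$, contradicting maximality. The only obstruction to choosing such an $e$ is that every non-edge of $G_1$ is blocked by a monochromatic-pair cherry in colours $2$ and $3$ not using $xy$; when $G_1=K_N$ this forces $d_{G_2}(x)=d_{G_3}(x)=1$, after which a single-vertex deletion of $x$ goes through (using $C>4$), and the remaining rigid case is dispatched by the symmetric move (deleting $xy$ from $G_3$ and adding an edge to $G_1$) or by a direct vertex deletion. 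Assembling the cases yields $G_1\cap G_2\cap G_3=\emptyset$.
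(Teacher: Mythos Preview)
Your outline --- delete both endpoints of a rainbow edge and appeal to the minimality of $N$ --- is the same as the paper's, and your local analysis (the trichotomy for the pairs $xz,yz$, the degree inequalities, and the bound $\sum_i(d_{G_i}(x)+d_{G_i}(y))\le 3N$) is correct. The gap is entirely in the $g$-estimate, and your proposed patchwork of cases does not close it.

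Concretely: your ``crude'' bounds $\sum_i\beta_i\ge\sum_i|G_i|-3N+3$ and $\max(\beta_2,\beta_3)\le|G_2|$ yield, after the standard manipulations,
\[
g(\mathbf G'')\ \ge\ g(\mathbf G)+(1-2\sqrt{\alpha_2})N+O(1),
\]
which is useless precisely because $\alpha_2>\tfrac14$. Your fix in the ``near-complete regime'' relies on $|G_2|-\beta_2=d_{G_2}(x)+d_{G_2}(y)-1$ being of order $N$ when $|G_2|$ is large, but this is not forced: $x,y$ can have tiny $G_2$-degree even when $G_2$ is nearly complete on $V\setminus\{x,y\}$. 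Your fallback edge-swap (delete $xy$ from $G_2$, add some $e\notin G_1$ to $G_1$) then needs an $e$ creating no rainbow triangle, and you give no argument for why such an $e$ exists outside the extreme case $G_1=K_N$; the remaining ``symmetric move'' and ``direct vertex deletion'' are asserted rather than carried out. As written, the cases do not exhaust the possibilities and none of them is argued in enough detail to be checked.

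What the paper does instead is introduce a single parameter that couples the two losses. Let $R$ be the number of rainbow edges from $\{x,y\}$ to $V\setminus\{x,y\}$. Then your trichotomy gives the sharper bound $\sum_i|G_i|-\sum_i\beta_i\le 2(N-2)+3+R$ (the contribution of a vertex $z$ is $3$ only when one of $xz,yz$ is rainbow), while simultaneously $\max(\beta_2,\beta_3)\le|G_2|-R-1$ (since every rainbow edge lies in $G_2$ and in $G_3$). The same $R$ appears with opposite effect in the two estimates, and a short computation with $\alpha_2=|G_2|/\binom{N}{2}$ and $\rho=(R+1)/(N-1)$ shows
\[
-R+2f_N(|G_2|)-2f_{N-2}(|G_2|-R-1)=\Bigl(\rho-2\sqrt{\alpha_2}-\tfrac{\rho}{\sqrt{\alpha_2}}\Bigr)N+O(1)\ \ge\ -2N+O(1),
\]
using only that $\alpha_2\in[\tfrac14,1]$ and $\rho\in[0,1]$. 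This single inequality replaces your entire case analysis and yields $g(\mathbf G'')\ge C(N-2)$ directly for $C$ large enough.
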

\begin{proof}
Suppose for a contradiction that $xx' \in G_1\cap G_2\cap G_3$. We shall show the subtemplate $\mathbf{G}'$ induced by $V\setminus\{x,x'\}$ is a smaller counterexample to Theorem~\ref{theorem: technical version, hard case}.

Observe that for every $y\in V\setminus \{x,x'\}$, if one of the edges $xy, x'y$ is bi-chromatic or rainbow, then the other edge must be missing from $\bigcup_{i=1}^3 G_i$ (as otherwise we have a rainbow triangle in $\mathbf{G}$).  In particular,  writing $R$ for the number of rainbow edges from $xx'$ to $V\setminus \{x,x'\}$ (which by our observation satisfies $R\leq N-2$), we have
\begin{align}
\sum_{i=1}^3 \vert G'_i\vert \geq \sum_{i=1}^3 \vert G_i\vert - 2(N-2)-3-R&> 2\binom{N}{2}+2f_N(\vert G_2 \vert) +CN- 2N-R+1\notag\\
&= 2\binom{N-2}{2} +C(N-2) +2f_N
(\vert G_2\vert ) + 2N-R +2C-5\label{eq: bound on sum of Gi'}\end{align}
Clearly, the size of the second largest colour class in $\mathbf{G}'$ is at most $\vert G_2\vert -R-1$ (since both $\vert G_2\vert $ and $\vert G_3\vert$ decreased by at least $R+1$ when we removed the rainbow edge $xx'$ and the $R$ rainbow edges from $xx'$ to $V\setminus\{x,x'\}$). Now, we have that
\begin{align}\label{eq: difference between the fs}
-R+ 2f_{N}(\vert G_2\vert)-2f_{N-2}(\vert G_2\vert -R-1)= R+2-2\sqrt{\binom{N}{2}\vert G_2\vert}\left(1- \sqrt{\frac{N^2-5N+6}{N^2-N}}\sqrt{1 -\frac{R+1}{\vert G_2\vert}}  \right).
\end{align}
Write $\vert G_2\vert =\alpha_2 \binom{N}{2}$ and $R+1=\rho (N-1)$. By Lemma~\ref{lemma: case 2, G2 large}(iii), we know $\alpha_2 \in [\frac{1}{4},1]$. Further, by our observation above $R+1\leq N-1$, whence $\rho\leq 1$. By a straightforward asymptotic analysis,
\begin{align}\label{eq: asymptotic analysis fo rhte f difference}
1 + \rho (n-1) -2\binom{n}{2}\sqrt{\alpha_2}\left( 1- \sqrt{\frac{1-5n^{-1}+6n^{-2}}{1-n^{-1}}} \sqrt{1-\frac{2\rho}{\alpha_2 n}} \right)& =\left(\rho -2\sqrt{\alpha_2}-\frac{\rho}{\sqrt{\alpha_2}}\right)n +O(1).
\end{align} 
For $\alpha_2\in [1/4, 1]$, and $\rho \in [0,1]$, we have  $\rho - \frac{\rho}{\sqrt{\alpha_2}} \geq 1 - \frac{1}{\sqrt{\alpha_2}}$. Since $\left(1 - \sqrt{\alpha_2}\right) \left(\sqrt{\alpha_2} - \frac{1}{2}\right) \geq 0$, it follows that $1 - 2\sqrt{\alpha_2} - \frac{1}{\sqrt{\alpha_2}} \geq -2$. Combining this fact with~\eqref{eq: bound on sum of Gi'}, \eqref{eq: difference between the fs} and~\eqref{eq: asymptotic analysis fo rhte f difference}, and picking $C>5$ sufficiently large to ensure that we can absorb the $O(1)$ term in~\eqref{eq: asymptotic analysis fo rhte f difference}  with the $2C-5$ term in~\eqref{eq: bound on sum of Gi'} (recall that $N>4C$ by Lemma~\ref{lemma: case 2, G2 large}(ii), so picking $C$ sufficiently large ensures $N$ itself can be made sufficiently large), we get
\begin{align*}
\sum_{i=1}^3 \vert G'_i\vert & > 2\binom{N-2}{2}+2f_{N-2}(\vert G_2\vert -R-1) +C(N-2).
\end{align*}
We are now done once we observe that if $g'_2$ is the size of the second largest colour class in $\mathbf{G}'$, then $f_{N-2}(g'_2)\leq f_{N-2}(\vert G_2\vert -R-1)$. Indeed, as we noted above, $g'_2\leq \vert G_2\vert - R-1$. On the other hand, note that all colour classes have lost at most $1+ 2(N-2)=2N-3$ edges when we removed $xx'$ from $V$. Thus by Lemma~\ref{lemma: case 2, G2 large}(iii)
\[g'_2\geq \vert G_2\vert - 2N+3 \geq \frac{1}{4}\binom{N}{2}+\frac{C}{2}N -2N+3>\frac{1}{4}\binom{N-2}{2}.\]
As $f_{N-2}$ is increasing for $x\geq \frac{1}{4}\binom{N-2}{2}$ (Proposition~\ref{prop: f minimum}), this last inequality implies $f_{N-2}(g'_2)\leq f_{N-2}(\vert G_2\vert -R-1)$. Thus $\mathbf{G}'$ is indeed a counterexample to Theorem~\ref{theorem: technical version, hard case}, contradicting the vertex minimality of $\mathbf{G}$.
\end{proof}
\begin{corollary}\label{cor: no 23 edges}
There are no bi-chromatic edges in colours $23$: $G_2\cap G_3=\emptyset$.
\end{corollary}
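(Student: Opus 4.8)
The plan is to derive this immediately from the two structural lemmas just established for the vertex-minimal counterexample $\mathbf{G}$, namely Lemma~\ref{lemma: hard case, G1 contains G2 cup G3} ($G_2\cup G_3\subseteq G_1$) and Lemma~\ref{lemma: hard case, no rainbow edges} ($G_1\cap G_2\cap G_3=\emptyset$). First I would suppose, for contradiction, that $xx'$ is a bi-chromatic edge in colours $2$ and $3$; by definition this means $xx'\in G_2\cap G_3$. Applying Lemma~\ref{lemma: hard case, G1 contains G2 cup G3} gives $xx'\in G_2\subseteq G_2\cup G_3\subseteq G_1$, so in fact $xx'\in G_1\cap G_2\cap G_3$. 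Thus $xx'$ is a rainbow edge, which contradicts Lemma~\ref{lemma: hard case, no rainbow edges}. Hence no such pair $xx'$ exists, i.e. $G_2\cap G_3=\emptyset$.

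There is no genuine obstacle here: the corollary is a one-line consequence of the containment lemma, which forces any pair lying in both $G_2$ and $G_3$ into $G_1$ as well, turning it into a forbidden rainbow edge. The only point to keep straight is the bookkeeping that "bi-chromatic in colours $23$'' is precisely the assertion $xx'\in G_2\cap G_3$, so once the two preceding lemmas are in hand nothing further needs to be verified.
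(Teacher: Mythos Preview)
Your proof is correct and matches the paper's own argument essentially verbatim: combine $G_2\cap G_3\subseteq G_1$ from Lemma~\ref{lemma: hard case, G1 contains G2 cup G3} with the absence of rainbow edges from Lemma~\ref{lemma: hard case, no rainbow edges} to conclude $G_2\cap G_3=\emptyset$.
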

\begin{proof}
	Since there are no rainbow edges (Lemma~\ref{lemma: hard case, no rainbow edges}), this is immediate from the fact proved in Lemma~\ref{lemma: hard case, G1 contains G2 cup G3} that $G_2\cap G_3\subseteq G_1$.
\end{proof}
\noindent
We now consider a largest matching of bi-chromatic edges from $\mathbf{G}$. By Lemmas~\ref{lemma: hard case, G1 contains G2 cup G3} and~\ref{lemma: hard case, no rainbow edges}, this matching $M$ is the disjoint union of two matchings $M_{12}$ and $M_{13}$ of bi-chromatic edges in colours $12$ and $13$ respectively. Let $V_{12}$ and $V_{13}$ denote the collections of vertices contained in some edge of $M_{12}$ and $M_{13}$ respectively, and let $D:=V\setminus (V_{12}\sqcup V_{13})$.

We shall perform modifications of $\mathbf{G}$ in a sequential manner, to obtain a new colouring template $\mathbf{G}''$, which may contain some rainbow triangles, but is well-structured with respect to the partition $V=V_{12}\sqcup V_{13} \sqcup D$ while still satisfying a slightly weaker form of~\eqref{eq: bound on sum Gi Gj Gk hard case}. This will lead to the desired contradiction (Lemma~\ref{lemma: hard case, contradiction}).

Before we start modifying our colouring template, we shall make some observations about $\mathbf{G}$, introduce an auxiliary graph $A$ on $M$, and observe some elementary properties of $g$, our function of colouring templates, all of which we shall need to analyse our sequence of modifications of $\mathbf{G}$. 
\begin{proposition}\label{prop: basic obs about the partition} For $j \in \{2,3\}$, set $\{k\}:=\{2,3\}\setminus \{j\}$. Then the following hold:
\begin{enumerate}[(i)]
	\item  for any pair of distinct edges $X,X'\in M_{1j}$, if there is any edge in colour $k$ from $X$ to $X'$, we have that at least two edges from $X$ to $X'$ are missing from $G_1\cup G_2$; 
\item for any edge $X\in M_{1j}$ and any edge  $Y\in M_{1k}$, we have $\sum_{i=1}^3 \vert G_i[X, Y]\vert \leq 5$, with equality only attained if  $\vert G_2[X, Y]\vert=\vert G_{3}[X,Y]\vert =1$ and $\vert G_1[X,Y]\vert=3$;
\item if $X\in M_{1j}$ and $Y, Y'$ are distinct edges in $M_{1k}$ such that $\sum_{i=1}^3 \vert G_i[X, Y]\vert= \sum_{i=1}^3 \vert G_i[X, Y']\vert=5$, then $\vert G_k[Y,Y']\vert \leq 3$;
\item if $v\in D$ and $X\in M_{1j}$ are such that there exists a bi-chromatic edge in colour $1k$ from $v$ to $X$, then $\sum_{i=1}^3 \vert G_i[X,v] \vert =2$;
\item there is no bi-chromatic edge in $D$ (and in particular $D$ is an independent set in $G_2\cup G_3$);
\item for every $X\in M_{ij}$, there exists at most one $v\in D$ sending bi-chromatic edges to both endpoints of $X$.
\end{enumerate}
\end{proposition}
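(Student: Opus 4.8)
The plan is to derive all six items from a single convenient reformulation of the hypothesis that $\mathbf{G}$ is rainbow-triangle-free. By Lemma~\ref{lemma: hard case, G1 contains G2 cup G3}, Lemma~\ref{lemma: hard case, no rainbow edges} and Corollary~\ref{cor: no 23 edges}, every pair $e\in V^{(2)}$ is of exactly one of four types: \emph{absent} (in no colour class, equivalently not in $G_1$), in $G_1$ only, in $G_1\cap G_2$, or in $G_1\cap G_3$; in the last two cases say $e$ has \emph{secondary colour} $2$, respectively $3$. One then checks straight from the definitions that a triangle is rainbow in $\mathbf{G}$ if and only if all three of its edges lie in $G_1$ and both secondary colours $2$ and $3$ appear among those three edges --- this, together with the fact that $M=M_{12}\sqcup M_{13}$ is a \emph{largest} matching of bi-chromatic edges (each $X\in M_{1j}$ having secondary colour $j$) and that $D$ is the set of $M$-unmatched vertices, is all we shall use.

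I would first dispatch the easy items (i), (iv), (v), (vi). For (i) and (iv), fix one of the bi-chromatic edges supplied by the hypothesis, say of secondary colour $k$, and look at the triangle it forms with an endpoint of the relevant matching edge $X\in M_{1j}$: since $X$ has secondary colour $j\neq k$ and both $X$ and the chosen edge lie in $G_1$, the third edge of that triangle must be absent; tallying the absent pairs produced this way gives exactly the stated bounds. Items (v) and (vi) are augmenting-type arguments against the maximality of $M$. A bi-chromatic edge inside $D$ could be added to $M$, contradicting maximality; this gives (v), and since every edge of $G_2\cup G_3$ is bi-chromatic, it follows that $D$ is independent in $G_2\cup G_3$. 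If two distinct $v,v'\in D$ each sent bi-chromatic edges to both endpoints $x_1,x_2$ of some $X\in M_{1j}$, then the reformulation applied to the triangles $\{x_1,x_2,v\}$ and $\{x_1,x_2,v'\}$ forces all four of these edges to have secondary colour $j$; then $\bigl(M\setminus\{X\}\bigr)\cup\{vx_1,v'x_2\}$ is a larger matching of bi-chromatic edges, a contradiction, which gives (vi).

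The heart of the proposition is (ii), and I expect its equality analysis to be the main obstacle. I would arrange the four pairs between $X\in M_{1j}$ and $Y\in M_{1k}$ as a $2\times 2$ grid, the rows indexed by the vertices of $X$ and the columns by those of $Y$, and let $p,q,r,s$ count the cross-pairs of secondary colour $j$, of secondary colour $k$, lying in $G_1$ only, and absent, respectively, so that $p+q+r+s=4$ and $\sum_{i=1}^3\vert G_i[X,Y]\vert = r+2p+2q = 4+(p+q)-s$. It then suffices to prove $p+q\leq s+1$, with equality characterised. Two observations do the work. First, the triangle formed by a secondary-colour-$j$ cross-pair with $Y$ (which has secondary colour $k$) forces the other cross-pair in the same row to be absent; symmetrically, a secondary-colour-$k$ cross-pair forces the other cross-pair in its column to be absent. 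Second, a single row contains at most one secondary-colour-$j$ cross-pair, and a single column at most one secondary-colour-$k$ cross-pair, since otherwise the triangle through $X$ (resp. $Y$) would be rainbow. A short case check on $p+q\in\{0,1,2,3,4\}$ using these facts rules out $p+q\geq 3$ and rules out $p=2$ and $q=2$, so the only way to attain $p+q=s+1=2$ is $p=q=1$; in that case the two forced-absent partners must be the same pair, which pins down a "diagonal" configuration in which $\vert G_1[X,Y]\vert=3$ and $\vert G_2[X,Y]\vert=\vert G_3[X,Y]\vert=1$, while in every other case $\sum_{i=1}^3\vert G_i[X,Y]\vert\leq 4$.

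Finally, (iii) follows by feeding the diagonal structure from (ii) back into the reformulation. Let $x$ be the endpoint in $X$ of the unique secondary-colour-$j$ cross-pair between $X$ and $Y$, and let $y\in Y$ be its other endpoint. Applying the equality structure of (ii) to the pair $X,Y'$ shows that $x$ is joined to some $y'\in Y'$ by a cross-pair lying in $G_1$ whose secondary colour is $j$ (if the secondary-colour-$j$ cross-pair between $X$ and $Y'$ is incident to $x$) or $k$ (otherwise). Either way, in the triangle $\{x,y,y'\}$ the two pairs at $x$ lie in $G_1$ and between them carry secondary colour $j$, and possibly also $k$; hence, to avoid a rainbow triangle, the pair $yy'$ cannot have secondary colour $k$, and in the second case it cannot even lie in $G_1$. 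In all cases $yy'\notin G_k$, so $\vert G_k[Y,Y']\vert\leq 3$.
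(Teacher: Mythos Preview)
Your proposal is correct and follows the same approach the paper sketches: parts (i)--(iv) by forbidden-triangle analysis using the reformulation that a triangle is rainbow precisely when all three edges lie in $G_1$ and both secondary colours appear, and parts (v)--(vi) by augmenting arguments against the maximality of $M$. The paper's own proof is a two-line sketch (``immediate from rainbow-$K_3$-freeness and simple case analysis'' / ``maximality of $M$ and $G_2\cup G_3\subseteq G_1$''), so your write-up is in fact a faithful expansion of exactly that plan; in particular your grid bookkeeping for (ii) and the diagonal feed-through for (iii) are the intended ``simple case analysis''. One cosmetic slip: in your second observation for (ii) the parenthetical ``through $X$ (resp.\ $Y$)'' should read ``through $Y$ (resp.\ $X$)'', and that observation is in any case already implied by your first one.
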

\begin{proof}
Parts (i)--(iv) are immediate from the fact $\mathbf{G}$ is rainbow $K_3$-free and simple case analysis. Parts (v)--(vi) follow from the maximality of the bi-chromatic matching $M$ and the fact that $G_2\cup G_3\subseteq G_1$.	
\end{proof}
\noindent Next, we define an auxiliary graph $A$ on the edges of the matching $M_{12}\sqcup M_{13}$ by including a pair $X,X' \in M_{1j}$ in $A$ if $\vert G_j[X, X']\vert \leq 3$ and a pair $X\in M_{12}$, $Y\in M_{13}$ if $\sum_{i=1}^3 \vert G_i[X, Y]\vert = 5$. Finally, we make some elementary observations about $g$.
\begin{proposition}\label{prop: if g small}
Let $\mathbf{G}'$ be a $3$-colouring template with $\vert G'_1\vert \geq \max\left\{\vert G'_2\vert, \vert G'_3\vert \right\}$. If $\max\left\{\vert G'_2\vert, \vert G'_3\vert \right\}\leq \frac{1}{4}\binom{N}{2}+N$, then $g(\mathbf{G}')\leq 2N$.	
\end{proposition}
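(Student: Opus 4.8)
The plan is to bound $g(\mathbf{G}')$ above by an expression in the single quantity $t := \max\{\vert G'_2\vert, \vert G'_3\vert\}$ and then reduce to an elementary inequality. Recalling that $f_N(x) = x - \sqrt{x\binom{N}{2}}$, one rewrites
\[g(\mathbf{G}') = \vert G'_1\vert + \vert G'_2\vert + \vert G'_3\vert - 2\binom{N}{2} - 2f_N(t).\]
First I would use the two trivial bounds $\vert G'_1\vert \leq \binom{N}{2}$ and $\vert G'_2\vert + \vert G'_3\vert \leq 2t$, which together give
\[g(\mathbf{G}') \leq \binom{N}{2} + 2t - 2\binom{N}{2} - 2f_N(t) = 2\sqrt{t\binom{N}{2}} - \binom{N}{2},\]
the $2t$ terms cancelling against $-2f_N(t) = -2t + 2\sqrt{t\binom{N}{2}}$.

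Next, since $t \mapsto \sqrt{t\binom{N}{2}}$ is increasing and the hypothesis gives $t \leq \tfrac{1}{4}\binom{N}{2} + N$, substituting this bound yields
\[g(\mathbf{G}') \leq 2\sqrt{\binom{N}{2}\left(\tfrac{1}{4}\binom{N}{2} + N\right)} - \binom{N}{2}.\]
It then remains to check that the right-hand side is at most $2N$, i.e.\ that $2\sqrt{\binom{N}{2}\left(\tfrac{1}{4}\binom{N}{2} + N\right)} \leq \binom{N}{2} + 2N$. Both sides are non-negative, so squaring is an equivalence, and after squaring this reads $\binom{N}{2}^2 + 4N\binom{N}{2} \leq \binom{N}{2}^2 + 4N\binom{N}{2} + 4N^2$, which holds trivially. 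This completes the argument.

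There is no genuine obstacle here: the statement is a short computation. The only points that need a modicum of care are recognising that the $2t$ contributions cancel when $f_N(t)$ is expanded — so that the upper bound depends on $t$ only through the increasing function $\sqrt{t\binom{N}{2}}$ — and that this monotonicity is exactly what licenses substituting the hypothesised bound on $t$ before squaring. Everything else is routine algebra, and one could alternatively phrase the final step via Proposition~\ref{prop: f minimum}.
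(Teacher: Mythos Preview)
Your argument is correct and essentially identical to the paper's: both bound $\vert G'_1\vert\le\binom{N}{2}$ and $\vert G'_2\vert+\vert G'_3\vert\le 2t$ to reduce to $g(\mathbf{G}')\le 2\sqrt{t\binom{N}{2}}-\binom{N}{2}$, then substitute the hypothesis on $t$. The only cosmetic difference is in the final step: the paper rewrites the radicand as $\frac{1}{4}\binom{N}{2}^2\bigl(1+\frac{8}{N-1}\bigr)$ and applies $\sqrt{1+2x}\le 1+x$, whereas you square directly---these are equivalent manipulations.
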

\begin{proof}Since $\vert G_1\vert \leq \binom{N}{2}$, our assumption together with the bound $\sqrt{1+2x}\leq 1+x$ gives
\[g(\mathbf{G}')\leq -\binom{N}{2} + \binom{N}{2}\sqrt{1+\frac{8}{N-1}} \leq 2N.\]	
\end{proof}
\begin{proposition}\label{prop: changes to g}
	Suppose $\max\left(\vert G'_2\vert, \vert G'_3\vert \right)\geq \frac{1}{4}\binom{N}{2}+N$. Then the following hold:
	\begin{enumerate}[(i)]
		\item the value of $g(\mathbf{G}')$ does not decrease if we delete a bi-chromatic edge and add two edges in colour $1$;
		\item for $t\leq N$, the value of $g(\mathbf{G}')$ decreases by at most $t$ if we delete up to $t$ edges in colours $2$ or $3$. 
	\end{enumerate} 
\end{proposition}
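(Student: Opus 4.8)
The plan is to reduce everything to the single fact, recorded in Proposition~\ref{prop: f minimum}, that $f_N$ is increasing on $[\tfrac14\binom{N}{2},\binom{N}{2}]$. First I would rewrite $g$ in the form
\[ g(\mathbf{G}') \;=\; \vert G'_1\vert + \vert G'_2\vert + \vert G'_3\vert \;-\; 2\binom{N}{2} \;-\; 2\,f_N\!\left(g'_2\right), \qquad g'_2 := \max\{\vert G'_2\vert, \vert G'_3\vert\}, \]
which is immediate from the definitions of $g$ and $f_N$, and note that the hypothesis gives $\tfrac14\binom{N}{2}+N \le g'_2 \le \binom{N}{2}$. The key observation (used for both parts) is then: if a modification of $\mathbf{G}'$ only ever deletes edges from $G_2$ and $G_3$ and never adds any, then the new value $\widetilde{g}_2$ of $\max\{\vert G_2\vert,\vert G_3\vert\}$ satisfies $\widetilde{g}_2 \le g'_2$, and provided it also stays $\ge \tfrac14\binom{N}{2}$ we get $f_N(\widetilde{g}_2) \le f_N(g'_2)$ by Proposition~\ref{prop: f minimum}; hence the term $-2f_N(\cdot)$ in $g$ does not decrease. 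So the whole argument comes down to tracking the change in $\vert G'_1\vert+\vert G'_2\vert+\vert G'_3\vert$ and checking $\widetilde{g}_2 \ge \tfrac14\binom{N}{2}$.

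For part (ii): carrying out at most $t\le N$ deletions, each removing a single edge from $G_2$ or from $G_3$, decreases $\vert G'_1\vert+\vert G'_2\vert+\vert G'_3\vert$ by at most $t$ and decreases $\max\{\vert G_2\vert,\vert G_3\vert\}$ by at most $t$; since $t\le N$ we have $\widetilde{g}_2 \ge \tfrac14\binom{N}{2}+N-t \ge \tfrac14\binom{N}{2}$, so by the observation above $-2f_N(\cdot)$ does not decrease and therefore $g$ decreases by at most $t$.

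For part (i): deleting a bi-chromatic edge removes that pair from the (exactly two, since in every application $\mathbf{G}'$ has no rainbow edges, cf.\ Lemma~\ref{lemma: hard case, no rainbow edges}) colour classes containing it, decreasing $\vert G'_1\vert+\vert G'_2\vert+\vert G'_3\vert$ by $2$; adding two previously absent edges in colour $1$ then increases this sum by $2$, for a net change of $0$. Again $\max\{\vert G_2\vert,\vert G_3\vert\}$ only goes down, by at most $1$, so $\widetilde{g}_2 \ge \tfrac14\binom{N}{2}+N-1 > \tfrac14\binom{N}{2}$ and $-2f_N(\cdot)$ does not decrease; hence $g(\mathbf{G}')$ does not decrease.

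I do not expect a genuine obstacle here: the content is a routine monotonicity check, and the only point needing (minimal) care is ensuring that after the modification $\max\{\vert G_2\vert,\vert G_3\vert\}$ remains in the increasing branch $[\tfrac14\binom{N}{2},\binom{N}{2}]$ of $f_N$ — which is precisely what the standing hypothesis $\max\{\vert G'_2\vert,\vert G'_3\vert\}\ge \tfrac14\binom{N}{2}+N$ (together with $t\le N$) is there to guarantee. The one situation not covered, namely deleting a pair lying in all three colour classes, does not occur in the applications and so needs no separate treatment.
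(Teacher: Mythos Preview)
Your proof is correct and takes essentially the same approach as the paper: both reduce to the monotonicity of $f_N$ on $[\tfrac14\binom{N}{2},\binom{N}{2}]$ from Proposition~\ref{prop: f minimum}, tracking separately the change in $\sum_i|G'_i|$ and in $\max\{|G'_2|,|G'_3|\}$. Your unified observation is slightly cleaner than the paper's explicit case split on the colour pair of the bi-chromatic edge, and you correctly flag (and dismiss) the rainbow-edge case that neither argument formally covers but that never arises in the applications.
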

\begin{proof}
For part (i), assume without loss of generality that $\vert G'_2\vert \geq \vert G'_3\vert $.	Replacing a bi-chromatic  edge in colour $13$ by two edges in colour $1$ does not change the value of $g$. If $\vert G'_3\vert = \vert G'_2\vert$, then similarly we do not change the value of $g$ by removing a bi-chromatic edge in colours $12$ and adding in two edges in colour $1$. On the other other hand, if $\vert G'_3\vert < \vert G'_2\vert $, then deleting a bi-chromatic edge in colours $12$ and adding in two edges in colour $1$ keeps $\sum_{i=1}^3 \vert G'_i\vert$ constant and strictly decreases $2f(\vert G'_2\vert)= 2\vert G'_2\vert -2\sqrt{\binom{N}{2}\vert G'_2\vert}$ (since $f=f_N(x)$ is increasing in the interval $[\frac{1}{4}\binom{N}{2}, \binom{N}{2}]$, as shown in Proposition~\ref{prop: f minimum}); thus $g(\mathbf{G}')=\sum_{i=1}^3\vert G'_i\vert -2f(\max(\vert G'_2\vert, \vert G'_3\vert))$ actually increases in this case.

The proof of part (ii) follows similarly, and is left as an exercise to the reader.
\end{proof}
\noindent
We are now ready to embark  upon our sequence of modifications of $\mathbf{G}$. Set $\mathbf{G}'=\mathbf{G}$.  Recall that initially $\vert G'_2\vert\geq \frac{1}{4}\binom{N}{2}+\frac{C}{2}N>\frac{1}{4}\binom{N}{2}+2N$ by Proposition~\ref{lemma: case 2, G2 large}. Also, initially $\mathbf{G}'$ has the two properties that it contains no rainbow edge  and that it satisfies $G'_2\sqcup G'_3\subseteq G'_1$, both of which will be preserved by our modifications. Note however that our modifications will \emph{not} preserve the property of being rainbow $K_3$-free. Also, if the value of $\max\{\vert G_2'\vert, \vert G_3'\vert\}$ ever becomes too small by dropping below $\frac{1}{4}\binom{N}{2}+N$, we shall immediately stop the modification process.

\noindent\textbf{Step 1: dealing with $D$.} We go through the edges of the matching $M_{12}$. For each such edge $X$, we go through the vertices of $D$. If $v\in D$ sends a bi-chromatic edge of colours $13$, then by Proposition~\ref{prop: basic obs about the partition}(iv), we can replace this bi-chromatic edge by two edges in colour $1$ from $v$ to $X$. By Proposition~\ref{prop: changes to g}(i), this does not decrease the value of $g$. If this change brings $\max(\vert G'_2\vert , \vert G'_3\vert)$ below $\frac{1}{4}\binom{N}{2}+N$, then we stop our procedure and output the colouring template $\mathbf{G}''=\mathbf{G}'$.

We then repeat the same procedure with colours $2$ and $3$ switching roles, i.e.\ replace bi-chromatic edges in colours $12$ from $D$ to edges of $M_{13}$ by pairs of edges in colour $1$ (and outputting $\mathbf{G}''=\mathbf{G}'$ if the size of the second largest colour class ever becomes too small). Throughout, the value of $g(\mathbf{G}')$ does not decreases.

Next, we sequentially go through the edges $M_{12}\cup M_{13}$. By Proposition~\ref{prop: basic obs about the partition}(vi), for each such edge $X\in   M_{1j}$, there is at most one vertex $v_X\in D$ such that $v_X$ sends two bi-chromatic edges in colours $1j$ to $X$. If such a vertex $v_X$ exists, then we delete one of the two edges in colour $j$ from $v_X$ to $X$.

If the size of the second largest colour class in $\mathbf{G}'$ does not become too small, then at the end of this sequence of operations we have deleted at most $N/2<N$ edges in colours $2$ and $3$, and so by Proposition~\ref{prop: changes to g}(ii) we have $g(\mathbf{G}')\geq g(\mathbf{G})-N$ by the end of this step. Further, $\mathbf{G}'$ now has the property that for $j\in \{2,3\}$ at most half of the edges from $M_{1j}$ to $D$ are bi-chromatic in colours $1j$, and the rest of those edges are in colour $1$ or absent from $\bigcup_{i=1}^3G'_i$.

\noindent\textbf{Step 2: cleaning inside the $V_{1j}$.}  We sequentially go through the pairs of distinct edges $X,X' \in M_{12}$. For each such pair, if there is one edge in colour $3$ between $X$ and $X'$ then we have that (a) $XX'$ is an edge in our our auxiliary graph $A$, and (b) there are at least two edges from $X$ to $X'$ which are missing in $G_1\cup G_2$ (by Proposition~\ref{prop: basic obs about the partition}(i)). We then delete this edge in colour $3$, and arbitrarily add in one of the at least two missing edges in colour $1$ between $X$ and $X'$. If  there are two edges in colour $3$ between $X$ and $X'$, then we replace them with the two missing edges from $G_1[X, X']$, one after the other. By Proposition~\ref{prop: changes to g}, this does not decrease the value of $g(\mathbf{G}')$. Note that there cannot be more than $2$ edges in colour $3$ between them as $G_3 \subseteq G_1$. If one of our changes brings $\max(\vert G'_2\vert , \vert G'_3\vert)$ below $\frac{1}{4}\binom{N}{2}+N$, then we stop our procedure and output the colouring template $\mathbf{G}''=\mathbf{G}'$.

We then repeat the same procedure with colours $2$ and $3$ switching roles, i.e.\ replace edges in colours $2$ inside $V_{13}$ by edges in colour $1$ (and outputting $\mathbf{G}''=\mathbf{G}'$ if the size of the second largest colour class ever becomes too small). Throughout, the value of $g(\mathbf{G}')$ does not decrease (and thus remains at least $g(\mathbf{G})-N$). If the size of the second largest colour class in $\mathbf{G}'$ does not become too small in the process, then when we are done with this sequence of operations we have that for $j\in \{2,3\}$ the set $V_{1j}$ only contains edges in colours $1$ or $j$ and for every edge $XX'\in A[V_{1j}]$, there is (still) at least one edge in $(X, X')^{(2)}$ missing in $G'_j$.

\noindent\textbf{Step 3: cleaning across $V_{12}\times V_{13}$.} Recall the auxiliary graph $A$ introduced after Proposition~\ref{prop: basic obs about the partition}. We sequentially go through the pairs $X\in M_{12}$, $Y\in M_{13}$ with $XY\notin A$. For each such pair, we have $\sum_{i=1}^4G_i[X,Y]\leq 4$. So we can sequentially delete edges from $X$ to $Y$ in colours $2$ or $3$, and replace them by edges from $X$ to $Y$ in colour $1$. If this change brings $\max(\vert G'_2\vert , \vert G'_3\vert)$ below $\frac{1}{4}\binom{N}{2}+N$, then we stop our procedure and output the colouring template $\mathbf{G}''=\mathbf{G}'$.  By Proposition~\ref{prop: changes to g}(i), this does not decrease the value of $g(\mathbf{G}')$.

Next, we turn our attention to the pairs $X\in M_{12}$, $Y\in M_{13}$ with $XY\in A$. It follows from Proposition~\ref{prop: basic obs about the partition}(iii) that for each $X\in M_{12}$, the collection of $Y\in M_{13}$ with $XY\in A$ forms a clique in $A$.  By a graph theoretic result of Aharoni et al~\cite[Lemma 2.2]{AharoniDeVosdelaMazaMontejanoSamalin20},  under such a condition on the neighbourhoods we have 
\begin{align*}
\vert A[M_{12},M_{13}]\vert \leq \vert A[M_{12}] \vert + \vert A[M_{13}] \vert + \frac{\vert M_{12}\vert + \vert M_{13} \vert}{2}
\end{align*}
For convenience, set $e_{12} = \vert A[M_{12}] \vert$, $e_{13} = \vert A[M_{13}] \vert$ and $e = \vert A[M_{12},M_{13}] \vert$.   We begin by moving $\min\{e_{12},e\}$ edges from $G_2[M_{12}, M_{13}]$ to $G_2[M_{12}]$ and $\min\{e_{13},e\}$ edges from $G_3[M_{12}, M_{13}]$ to $G_3[M_{13}]$ (adding edges in colour $1$ to preserve $G'_2\sqcup G'_3\subseteq G'_1$ if necessary). This clearly does not decrease the value of $g$. Next we go through the remaining edges in colours $2$ or $3$ in $(M_{12},M_{13})^{(2)}$ one after the other, and replace all but at most $e -\min(e,e_{12})-\min(e,e_{13})\leq \frac{\vert M_{12} \vert + \vert M_{13} \vert}{2}$ of them by edges in colour $1$.

 To be more precise, at each step of this subprocess we let $j\in \{2,3\}$ be the second largest colour class in $\mathbf{G}'$ and $k$ the third largest colour class. If there is in $(M_{12},M_{13})^{(2)}$ any edge $f$ of $G_j$ and at least one missing edge in $G_1$, then we remove the edge $f$ in colour $j$ from $\mathbf{G}'$ and replace it by an edge $f'$ in colour $1$; if this brings $\max(\vert G'_2\vert, \vert G'_3\vert )$ below $\frac{1}{4}\binom{N}{2}+N$,  then we stop our procedure and output the colouring template $\mathbf{G}''=\mathbf{G}'$.  Otherwise if there is in $(M_{12},M_{13})^{(2)}$ any edge $f$ of $G_k$ and any edge $f'$ missing from $G_1$, then we remove the edge $f$ in colour $k$ from $\mathbf{G}'$ and replace it by an edge in colour $1$. By Proposition~\ref{prop: changes to g}(i) this does not decrease $g$.

 When the subprocess ends, we have at most $\frac{\vert M_{12}\vert + \vert M_{13} \vert }{2}\leq N/4$ edges in colours $2$ or $3$ left between $X$ and $Y$, which we remove. By Proposition~\ref{prop: changes to g}(ii), deleting these edges reduces the value of $g$ by at most $\frac{N}{4}$. If this brings $\max(\vert G'_2\vert, \vert G'_3\vert )$ below $\frac{1}{4}\binom{N}{2}+N$, then we stop our procedure and output the colouring template $\mathbf{G}''=\mathbf{G}'$. Otherwise, we have decreased the value of $g$ by at most $N/4$ in total in this step, whence $g(\mathbf{G}')\geq g(\mathbf{G})-2N$, and $\mathbf{G}'$ has the following property for $j\in \{2,3\}$:
\begin{align}\label{eq: keyprop G'}
\textrm{at most half of the edges from $V_{1j}$ to $D$ are in $G'_j$, and all other edges of $G'_j$ lie inside $V_{1j}$}.
\end{align}
We set $\mathbf{G}''=\mathbf{G}'$ and terminate our modification procedure. We are now ready to bound $g(\mathbf{G}'')$ and obtain the desired contradiction.
\begin{lemma}\label{lemma: hard case, contradiction}
	$g(\mathbf{G}'')\leq 3N$.
\end{lemma}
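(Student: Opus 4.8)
The plan is to bound $g(\mathbf{G}'')$ directly from the structure imposed on $\mathbf{G}''$ by the modification procedure, and then to compare with the lower bound $g(\mathbf{G}'')\ge g(\mathbf{G})-2N\ge (C-2)N$ to derive a contradiction. First, if the procedure terminated early because $\max\{|G_2''|,|G_3''|\}$ dropped below $\tfrac14\binom N2+N$, then Proposition~\ref{prop: if g small} immediately gives $g(\mathbf{G}'')\le 2N\le 3N$ and we are done; so we may assume the procedure ran to completion and property~\eqref{eq: keyprop G'} holds. Under~\eqref{eq: keyprop G'}, for $j\in\{2,3\}$ every edge of $G_j''$ lies either inside $V_{1j}$ or between $V_{1j}$ and $D$, and in the latter case at most half of the $(V_{1j},D)^{(2)}$-pairs are used. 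Writing $|V_{12}|=:a_{12}N$, $|V_{13}|=:a_{13}N$, $|D|=:dN$, this gives
\[
|G_2''|\le \binom{|V_{12}|}{2}+\tfrac12|V_{12}||D|< \binom N2\bigl(a_{12}^2+a_{12}d\bigr)+N,
\]
and similarly $|G_3''|<\binom N2(a_{13}^2+a_{13}d)+N$. For $|G_1''|$ we use $G_1''\subseteq [N]^{(2)}$, so $|G_1''|\le\binom N2$.

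Next I would feed these into the definition of $g$. Assume w.l.o.g.\ $|G_2''|\ge|G_3''|$, so $a_{12}\ge a_{13}$ up to the $O(N)$ slack. Then
\[
g(\mathbf{G}'')=|G_1''|+|G_2''|+|G_3''|-2\binom N2-2|G_2''|+2\sqrt{\binom N2 |G_2''|}
\le \binom N2\Bigl(-1 - (a_{12}^2+a_{12}d) + (a_{13}^2+a_{13}d) + 2\sqrt{a_{12}^2+a_{12}d}\Bigr)+O(N).
\]
Since $a_{13}^2+a_{13}d\le a_{12}^2+a_{12}d$, writing $w:=a_{12}^2+a_{12}d\in[0,1]$ the bracket is at most $-1-w+2\sqrt w = -(1-\sqrt w)^2\le 0$. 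Hence $g(\mathbf{G}'')\le O(N)$, and being careful with the additive constants one checks the $O(N)$ term is at most $3N$ (the two displays above each contribute at most $N$, and the $2\sqrt{\binom N2|G_2''|}$ term contributes a further $O(N)$ via $\sqrt{a+b}\le\sqrt a+\sqrt b$ applied to separate the $+N$ correction inside the square root). This gives $g(\mathbf{G}'')\le 3N$ as claimed.

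The only real subtlety — and the step I expect to need the most care — is the bookkeeping of the additive $O(N)$ terms to land exactly at the bound $3N$ rather than some larger multiple of $N$: one must track that the "half of the edges from $V_{1j}$ to $D$" bound is exact (from~\eqref{eq: keyprop G'}), that rounding $|V_{ij}|,|D|$ to $a_{ij}N, dN$ costs only $O(N)$, and that the square-root term is handled by $\sqrt{\binom N2|G_2''|}\le \sqrt{\binom N2}\cdot\sqrt{\binom N2(a_{12}^2+a_{12}d)} + O(N)=\binom N2\sqrt{a_{12}^2+a_{12}d}+O(N)$ since $|G_2''|\le\binom N2(a_{12}^2+a_{12}d)+N$. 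Everything else is the elementary inequality $-1-w+2\sqrt w\le 0$. Combining $g(\mathbf{G}'')\le 3N$ with the lower bound $g(\mathbf{G}'')\ge g(\mathbf{G})-2N\ge(C-2)N$ (valid since the procedure preserved $g$ up to $-2N$) forces $C\le 5$, contradicting $C>5$; this contradiction completes the proof of Theorem~\ref{theorem: technical version, hard case}.
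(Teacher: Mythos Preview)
Your overall strategy is the same as the paper's, but there is a genuine gap in the key inequality step. Writing $w:=a_{12}^2+a_{12}d$ and $v:=a_{13}^2+a_{13}d$, your bracket is $-1-w+v+2\sqrt w$. You assert that $v\le w$ gives ``bracket $\le -1-w+2\sqrt w$'', but that would require $v\le 0$; using $v\le w$ one only obtains $-1+2\sqrt w$, which can be strictly positive (e.g.\ $a_{12}=0.9$, $a_{13}=0.1$, $d=0$ gives $-1+2\sqrt w=0.8$, while the true bracket is $0$). So the crude bound $v\le w$ throws away exactly the cancellation you need. (There is also a secondary issue: from $|G_2''|\ge|G_3''|$ you cannot conclude $a_{12}\ge a_{13}$, since you only have \emph{upper} bounds on $|G_j''|$ in terms of the $a_{1j}$.)

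The paper avoids this by not discarding information about $a_{13}$: it substitutes the exact relation $a_{13}=1-d-a_{12}$ (from $a_{12}+a_{13}+d=1$) into the $|G_3''|$ bound. The bracket then simplifies algebraically to $-d-2a_{12}+2\sqrt{a_{12}(a_{12}+d)}$, which is $\le 0$ by AM--GM ($2\sqrt{a_{12}(a_{12}+d)}\le a_{12}+(a_{12}+d)$). Once you make this substitution, the rest of your write-up (handling the early-termination case via Proposition~\ref{prop: if g small}, using monotonicity of $x\mapsto -x+2\sqrt{x\binom{N}{2}}$, and tracking the $O(N)$ error terms) goes through essentially as you outlined.
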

\begin{proof}
If $\vert G''_2\vert \leq \frac{1}{4}\binom{N}{2}+N$, then our claim is immediate from Proposition~\ref{prop: if g small}. Otherwise, set $x_{1j}N:=\vert V_{1j} \vert$ for $j\in \{2,3\}$ and $dN:=\vert D\vert$. By~\eqref{eq: keyprop G'}, we have $\vert G''_j \vert \leq \binom{x_{1j}N}{2}+\frac{1}{2}x_{1j}dN^2$. Clearly $\vert G''_1\vert\leq  \binom{N}{2}$.  Assume without loss of generality that $\vert G''_2\vert\geq \vert G''_3\vert $. 

Now, the function $x\mapsto -x+2\sqrt{x\binom{N}{2}}$ is increasing in the interval $[0, \binom{N}{2}]$. It then follows from the bounds on the sizes of the colour classes above that, for a choice of the constant $C>0$ sufficiently large, we have 
\begin{align*}
g&(\mathbf{G}'')= \vert G''_1\vert -2\binom{N}{2} + \vert G''_3\vert -\vert G''_2\vert+2\sqrt{\vert G''_2\vert \binom{N}{2}}\\
&\leq -\binom{N}{2}+ \left(\binom{x_{13}N}{2}+\frac{x_{13}dN^2}{2}\right)-\left(\binom{x_{12}N}{2}+\frac{x_{12}dN^2}{2}\right)+2\sqrt{\left(\binom{x_{12}N}{2}+\frac{x_{12}dN^2}{2}\right)\binom{N}{2}}\\
&<\frac{N^2}{2}\left(-1+(1-d-x_{12})^2 -(x_{12})^2 +(1-2x_{12}-d)d +2\sqrt{x_{12}(x_{12}+d)}\right) + 3N\\
&=N^2\left(-d-2x_{12}+2\sqrt{x_{12}(x_{12}+d)}\right) + 3N \leq 3N.
\end{align*}
\end{proof}
\noindent
Since, as noted at the end of our modification procedure, $g(\mathbf{G})\leq g( \mathbf{G}'')+2N$, it follows from Lemma~\ref{lemma: hard case, contradiction} that $g(\mathbf{G})\leq 5N$, whence $\mathbf{G}$ fails to satisfy~\eqref{eq: bound on sum Gi Gj Gk hard case} (since $C$ was chosen so that $C>5$), a contradiction. Thus there is no counterexample to Theorem~\ref{theorem: technical version, hard case}, concluding our proof.\end{proof}

\subsection{Putting it together: proof of Theorem~\ref{theorem: forcing densities} and Corollary~\ref{cor: conjecture true}}

\begin{proof}[Proof of Theorem~\ref{theorem: forcing densities}]
For part (a), the statement (i) follows from the definition of $\mathcal{R}'_1$: $\alpha_2=x^2+(1-x-y)^2\geq 1-x^2=\alpha_3$. The statement (ii) follows directly from Theorem~\ref{theorem: technical version, easy case}, while the statement (iii) follows from Proposition~\ref{prop: construction analysis}. Similarly for part (b), the statement (i) follows from the definition of $\mathcal{R}_2$: $\alpha_3=2-\alpha_1-2\sqrt{\alpha_2}+\alpha_2$, which is at most $\alpha_2$ since $\alpha_1\geq 2-2\sqrt{\alpha_2}$. The statements (ii) and (iii) then follow from Theorem~\ref{theorem: technical version, hard case} and Proposition~\ref{prop: construction analysis}.

Thus the only task that remains is to establish part (c). Our goal is to show that if $\alpha_1\geq \alpha_2$ and $(\alpha_1, \alpha_2)\notin \mathcal{R}'_1\cup \mathcal{R}_2$, then  $(\alpha_1, \alpha_2, \alpha_2)$ is not a forcing triple. By Propositions~\ref{prop: trivial conditions}(a)--(d), 
we see that if $\alpha_1<\frac{1+\tau^2}{2}$, $\alpha_2<\frac{1}{4}$, $\alpha_1+\alpha_2<1$ or $(\alpha_1, \alpha_2)\in \mathcal{R}_1\setminus \mathcal{R}'_1$, then we are done. The only region this leaves uncovered consists of the $(\alpha_1, \alpha_2)$ with $\alpha_2\in [\frac{1}{4}, \frac{1}{2})$ and $\max\left\{1-\alpha_2,   1-2\sqrt{\alpha_2}+2\alpha_2\right\}\leq \alpha_1<2-2\sqrt{\alpha_2}$.
Consider any $\alpha_2\in [\frac{1}{4}, \frac{1}{2})$. Then for any $\alpha_1< 2-2\sqrt{\alpha_2}$, there exists $\varepsilon>0$ such that $\alpha_1<2-2\sqrt{\alpha_2 + \varepsilon}-\varepsilon$ and $\alpha_2 + \varepsilon < \frac{1}{2}$. Setting $a=\lfloor \sqrt{(\alpha_2+\varepsilon) n}\rfloor$, $b=\lfloor \frac{2\sqrt{(\alpha_2+\varepsilon)}-1}{2\sqrt{(\alpha_2+\varepsilon)}}n\rfloor$ and $c=n-a-b$, we have (by Proposition~\ref{prop: construction analysis} or a quick calculation) that the $n$-vertex Gallai $3$-colouring template $\mathbf{H}(a,b,c)$ has colour density
\[  \left(2-2\sqrt{\alpha_2+\varepsilon}, \alpha_2+\varepsilon, \alpha_2+\varepsilon\right)  +\left(O(n^{-1}), O(n^{-1}), O(n^{-1})\right),\] 
from which it follows that $(\alpha_1, \alpha_2, \alpha_2)$ is not a forcing triple (since $\alpha_1+\varepsilon<2-2\sqrt{\alpha_2+\varepsilon}$).
\end{proof}

\begin{proof}[Proof of Corollary~\ref{cor: conjecture true}]
We need to show that in any Gallai $n$-vertex $3$-colouring template with colour density vector $(\alpha_1, \alpha_2, \alpha_3)$ and $\alpha_1\geq \alpha_2\geq \alpha_3$,  $\prod_{i=1}^3\alpha_i\leq h(\upsilon)+o(1)$. By Theorem~\ref{theorem: forcing densities}, it suffices to show this for $(\alpha_1, \alpha_2)\in \mathcal{R}'_1\cup \mathcal{R}_2$.

If $(\alpha_1, \alpha_2)\in \mathcal{R}_2$, then by Theorem~\ref{theorem: forcing densities}(b)(ii)--(iii), it is enough to show that for any choice of $x,y\geq 0$ with $1-x-y\geq 0$, the Gallai $3$-colouring template $\mathbf{H}(\lfloor xn\rfloor,\lfloor yn\rfloor,n-\lfloor x n\rfloor -\lfloor yn\rfloor)$ satisfies $\prod_{i=1}^3 \vert H_i \vert \leq h(\upsilon)\binom{n}{2}^3+O(n^5)$. Now, by Proposition~\ref{prop: construction analysis}, $\prod_{i=1}^3 \vert H_i \vert = f_H(x,y)\binom{n}{2}^3 +O(n^5)$, where $f_H(x,y)$ is given by
\[f_H(x,y):= (1-2xy)x^2 ((1-x)^2+2xy).\]
For $x$ fixed and $y\in [0, 1-x]$, simple calculus tells us that
\[ f_H(x,y)\leq \left\{ \begin{array}{ll}  f_H(x, \frac{2-x}{4}) & \textrm{if }x\leq\frac{2}{3}\\
f_H(x, 1-x) & \textrm{if }x>\frac{2}{3}.\end{array}\right.\]
Now it can be checked that $f_H(x, \frac{2-x}{4})$ is an increasing function of $x$ in the interval $[0, \frac{2}{3}]$ (its derivative with respect to $x$ is $\frac{x}{2}\left((1-x)^2+1\right)\left(3(x-\frac{2}{3})^2+\frac{2}{3}\right)\geq 0$), whence for any such $x$ we have $f_H(x, \frac{2-x}{4})\leq f_H(\frac{2}{3}, \frac{1}{3})$. It follows that for any choice of $x,y\geq 0$ with $x+y \leq 1$, we have  $f_H(x,y)\leq \max_{x'\in [0,1]}f_H(x',1-x')= \max_{x'\in [0,1]}\left((x')^2+(1-x')^2\right)(x')^2\left(1-(x')^2\right) = h(\upsilon)$ as required. 

Similarly if $(\alpha_1, \alpha_2)\in \mathcal{R}'_1$, then by Theorem~\ref{theorem: forcing densities}(a)(ii)--(iii), it is enough to show that for any choice of $x\geq \frac{1}{2}$ and $y\geq \frac{1-x}{2}$ with $1-x-y\geq 0$,
 the Gallai $3$-colouring template $\mathbf{F}(\lfloor xn\rfloor,\lfloor yn\rfloor,n-\lfloor x n\rfloor -\lfloor yn\rfloor)$ satisfies $\prod_{i=1}^3 \vert F_i \vert \leq h(\upsilon)\binom{n}{2}^3+O(n^5)$. By Proposition~\ref{prop: construction analysis} we have $\prod_{i=1}^3 \vert F_i\vert= f_F(x,y)\binom{n}{2}^3+O(n^5)$, where $f_F(x,y):=(x^2+y^2)\left(x^2+(1-x-y)^2\right)(1-x^2)$.  
Now 
\[\frac{\partial f_F}{\partial y}(x,y)= 4(1-x^2)\left(y-\frac{1-x}{2}\right) \left(x^2+y^2- (1-x)y\right),\]
and for  $y\geq \frac{1-x}{2}$ we have $x^2+y^2-(1-x)y \geq x^2+\left(\frac{1-x}{2}\right)^2 -2\left(\frac{1-x}{2}\right)^2= \frac{1}{4}(3x-1)(x+1)\geq 0$ for all $x\geq \frac{1}{2}$. Thus $\frac{\partial f_F}{\partial y}(x,y)\geq 0$ for $(x,y)$ in the domain we are considering, and $f_F(x,y)\leq f_F(x,1-x)= \left(x^2+(1-x)^2\right)x^2\left(1-x^2\right) \leq h(\upsilon)$, as required. 
\end{proof}

\nocite{*}

\section*{Acknowledgements}
The authors would like to express their gratitude to an anonymous referee for their careful work on the paper, which helped improved its correctness and clarity. Research on this paper was supported by the Swedish Research Council grant VR 2021-03687 and postdoctoral grant 213-0204 from the Olle Engkvist Foundation.

\appendix
\section{Appendix: verifying inequality (\ref{eq1})}
\label{inequality} 

We wish to prove the following inequality:
\begin{equation}\label{eq1}
k(d)=\frac{d}{2 \sqrt{d^2+4 \tau^2}}+\frac{\frac{4 d^2}{\sqrt{d^2+(1-2 \tau)^2}}+4 \sqrt{d^2+(1-2 \tau)^2}-6 d}{4 \sqrt{4 d \sqrt{d^2+(1-2 \tau)^2}-3 d^2+16 \tau^2}}+\frac{d}{\sqrt{d^2+(1-2 \tau)^2}}-1 \geq 0
\end{equation}
for $0\leq d \leq 1$  and  $\tau=\frac{4-\sqrt{7}}{9}$. A plot of the function $k(d)$ for $d\in[0,1]$ is provided in Figure~\ref{figure: inequality}, which may help a reader convince themselves the inequality is true. We give a rigorous proof below.

\begin{figure}\label{figure: inequality}
	\centering
		\includegraphics[scale=0.8]{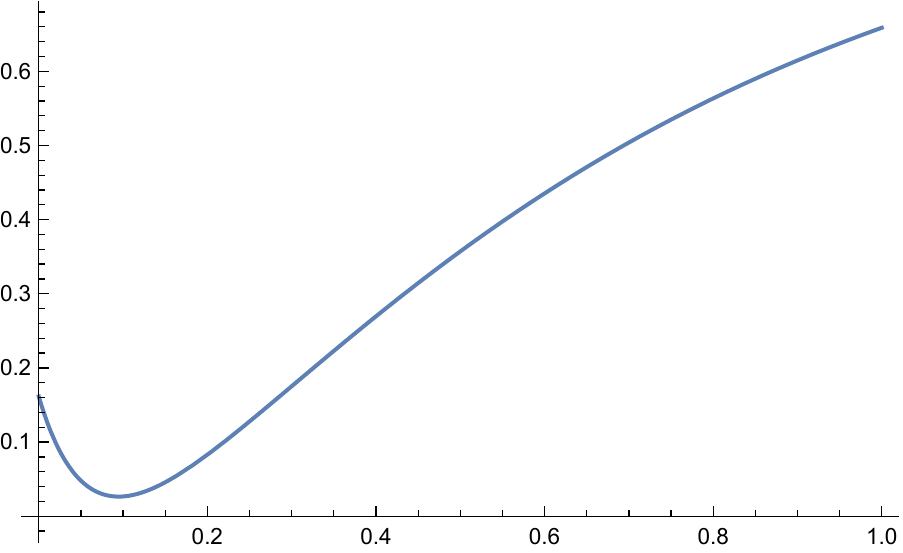}
	\caption{A plot of the function $k(d)$ from~\eqref{eq1} for $d\in [0,1]$}
\end{figure}

 Our first step is to find an upper bound on the modulus of the derivative of $k(d)$.  Differentiating term by term gives us that $k'(d)$ is equal to
\begin{multline*}
-\frac{d^2}{2 \left(d^2+4 \tau^2\right)^{3/2}}+\frac{1}{2 \sqrt{d^2+4 \tau^2}}+\frac{-\frac{4 d^3}{\left(d^2+(1-2 \tau)^2\right)^{3/2}}+\frac{12 d}{\sqrt{d^2+(1-2 \tau)^2}}-6}{4 \sqrt{4 d \sqrt{d^2+(1-2 \tau)^2}-3
		d^2+16 \tau^2}}\\-\frac{\left(\frac{4 d^2}{\sqrt{d^2+(1-2 \tau)^2}}+4 \sqrt{d^2+(1-2 \tau)^2}-6 d\right)^2}{8 \left(4 d \sqrt{d^2+(1-2 \tau)^2}-3 d^2+16 \tau^2\right)^{3/2}}-\frac{d^2}{\left(d^2+(1-2
	\tau)^2\right)^{3/2}}+\frac{1}{\sqrt{d^2+(1-2 \tau)^2}}.
\end{multline*}

\noindent Taking the modulus and distributing over the terms gives us an upper bound for $\vert k'(d)\vert$ of 

\begin{multline*}
\frac{d^2}{2 \left(d^2+4 \tau^2\right)^{3/2}}+\frac{1}{2 \sqrt{d^2+4 \tau^2}}+\frac{\frac{4 d^3}{\left(d^2+(1-2 \tau)^2\right)^{3/2}}+\frac{12 d}{\sqrt{d^2+(1-2 \tau)^2}}+6}{4 \sqrt{4 d \sqrt{d^2+(1-2 \tau)^2}-3 d^2+16
		\tau^2}}\\+\frac{\left(\frac{4 d^2}{\sqrt{d^2+(1-2 \tau)^2}}+4 \sqrt{d^2+(1-2 \tau)^2}+6 d\right)^2}{8 \left(4 d \sqrt{d^2+(1-2 \tau)^2}-3 d^2+16 \tau^2\right)^{3/2}}+\frac{d^2}{\left(d^2+(1-2
	\tau)^2\right)^{3/2}}+\frac{1}{\sqrt{d^2+(1-2 \tau)^2}}.
\end{multline*}

\noindent We upper bound this by setting $d=1$ where that maximizes a term, which yields

\begin{multline*}
\frac{\left(\frac{4}{\sqrt{d^2+(1-2 \tau)^2}}+4 \sqrt{(1-2 \tau)^2+1}+6\right)^2}{8 \left(4 d \sqrt{d^2+(1-2 \tau)^2}-3 d^2+16 \tau^2\right)^{3/2}}+\frac{1}{2 \sqrt{d^2+4 \tau^2}}+\frac{\frac{12}{\sqrt{d^2+(1-2
			\tau)^2}}+\frac{4}{\left(d^2+(1-2 \tau)^2\right)^{3/2}}+6}{4 \sqrt{4 d \sqrt{d^2+(1-2 \tau)^2}-3 d^2+16 \tau^2}}+\\\frac{1}{2 \left(d^2+4 \tau^2\right)^{3/2}}+\frac{1}{\sqrt{d^2+(1-2 \tau)^2}}+\frac{1}{\left(d^2+(1-2
	\tau)^2\right)^{3/2}}.
\end{multline*}

\noindent Next we set $d=0$ wherever that obviously maximizes a term, and use the fact that $\tau<\frac{1}{2}$ to substitute $1-2\tau$ for $\sqrt{(1-2\tau)^2}$, which gives
\begin{multline*}
\frac{\left(4 \sqrt{(1-2 \tau)^2+1}+\frac{4}{1-2 \tau}+6\right)^2}{8 \left(4 d \sqrt{d^2+(1-2 \tau)^2}-3 d^2+16 \tau^2\right)^{3/2}}+\frac{\frac{12}{1-2 \tau}+\frac{4}{\left(1-2
		\tau\right)^{3}}+6}{4 \sqrt{4 d \sqrt{d^2+(1-2 \tau)^2}-3 d^2+16 \tau^2}}+\\\frac{1}{4 \tau}+\frac{1}{16 \tau^3}+\frac{1}{1-2 \tau}+\frac{1}{\left(1-2 \tau\right)^{3}}.
\end{multline*}

\noindent Next, looking at $4 d \sqrt{d^2+(1-2 \tau)^2}-3 d^2+16 \tau^2$  we see that it is at least $d^2+16\tau^2>0$. What is more, its derivative with respect to $d$ is clearly greater or equal to $2d$.
Hence,  for $d$ in our interval $[0,1]$, this function is minimized at $d=0$.  So we set $d=0$ in the remaining expressions  and get 

\begin{multline*}
\frac{\left(4 \sqrt{(1-2 \tau)^2+1}+\frac{4}{1-2 \tau}+6\right)^2}{512 \tau^3}+\frac{\frac{12}{1-2 \tau}+\frac{4}{\left(1-2 \tau\right)^{3}}+6}{16 \tau}+\frac{1}{4\tau}+\frac{1}{16 \tau^3}+\frac{1}{1-2 \tau}+\frac{1}{\left(1-2 \tau\right)^{3}}.
\end{multline*}

\noindent Simplifying this we get 

\begin{equation*}
\frac{66716+31943 \sqrt{7}+12 \sqrt{19825442+7493276 \sqrt{7}}}{1152}  \approx 196.868  \leq 200.
\end{equation*}

\noindent Finally, evaluating the left hand side of inequality~\eqref{eq1} at 8000 evenly spaced points in the interval $[0,1]$ and using the fact the minimum cannot differ by more than $\frac{200}{8000}$ from the minimum of  these values we find that the left hand side of~\eqref{eq1} is bounded from below by $0.00147$. The actual minimum is $0.0264741$, which is achieved at $d \approx 0.0948007$.

\end{document}